\newdelim{\ip}{\langle}{\rangle}
\newcommand{\RR}{\mathbb{R}}
\newcommand{\ZZ}{\mathbb{Z}}
\newcommand{\TT}{\mathbb{T}}
\newcommand{\PP}{\mathbb{P}}
\newcommand{\pa}{\partial}
\newcommand{\cT}{\mathcal{T}}
\newcommand{\cL}{\mathcal{L}}
\newcommand{\bu}{\mathbf{u}}
\newcommand{\bv}{\mathbf{v}}
\newcommand{\bk}{\mathbf{k}}
\newcommand{\tbk}{\widetilde{\mathbf{k}}}
\newcommand{\rL}{\mathring{L}}
\newcommand{\rH}{\mathring{H}}
\newcommand{\tX}{\widetilde{X}}
\newcommand{\ep}{\epsilon}
\newcommand{\what}{\widehat}
\newcommand{\ds}{\displaystyle}
\begin{document}
\title[Global existence for the advective KSE]{Global existence for the two-dimensional Kuramoto-Sivashinsky equation with advection}

\author{Yuanyuan Feng}
\email{yzf58@psu.edu}
\author{Anna L. Mazzucato}
\email{alm24@psu.edu}
\address{Department of Mathematics, Penn State University, University Park, PA
16802, USA}

\begin{abstract} 
We study the Kuramoto-Sivashinsky equation (KSE) in scalar form on the two-dimensional torus with and without advection by an incompressible vector field. We prove local existence  of mild solutions for arbitrary data in $L^2$. We then study the issue of global existence. We prove global existence for the KSE in the presence of advection for arbitrary data, provided the advecting velocity field $\bv$ satisfies certain conditions that ensure the dissipation time of the associated hyperdiffusion-advection equation is sufficiently small. In the absence of advection, global existence can be shown only if the linearized operator does not admit any growing mode and for sufficiently small initial data.
\end{abstract}

\keywords{Two dimension, Kuramoto-Sivashinsky, mixing, global
existence, mild solutions, enhanced diffusion, diffusion time}

\subjclass[2010]{35K25, 35K58, 76E06, 76F25}

\maketitle

\section{Introduction}

We consider the Kuramoto-Sivashinsky equation (KSE for short) with and without advection on a two-dimensional torus
$\TT^2=[0, L_1]\times [0, L_2]$ with periodic boundary conditions. The KSE arises in combustion and is a model for diffusive instabilities of flame fronts and, more generally, for phenomena with large-scale instabilities. (We refer to \cite{HN86} for a more in-depth discussion and historical perspective on its derivation.)

Two forms of the equation go under the name KSE, a scalar form for a potential function $\phi$ and a vector, differentiated form for its gradient $\bu=\nabla \phi$, defined in any space dimension $d$. In the scalar form, the KSE is the following hyperviscous semilinear equation:
\begin{align}\label{eq:kse}
 \partial_t \phi +\frac{1}{2}|\nabla\phi|^2=-\Delta^2 \phi -\Delta \phi.
\end{align}
In the differentiated form, the KSE  becomes:
\begin{align}\label{eq:kse2}
 \partial_t \bu +\frac{1}{2}  \nabla |\bu|^2=-\Delta^2 \bu -\Delta \bu,
\end{align}
and, owing to the fact $\bu$ is curl free, the non-linearity can be written in advection form as $\bu\cdot \nabla \bu$, as in Burger's equation.
We will confine to the scalar form, since we will consider the addition of transport by a given, incompressible flow:
\begin{align}\label{e:aks}
\partial_t \phi +\bv\cdot \nabla\phi +\frac{1}{2}|\nabla\phi|^2=-\Delta^2 \phi -\Delta \phi,
\end{align}
where $\bv$ is a given, time-dependent, divergence-free vector field. The KSE with advection has been used to model passive flame propagation in premixed-combustion for example \cite{DK95}.

In this work, we are concerned with the long-time existence of solutions to \eqref{eq:kse} and (implicitly) of \eqref{e:aks}. A main difficulty in proving global existence is the lack of a maximum principle for the KSE, due to the presence of the biharmonic operator. In dimension $d=1$, it is possible to obtain an {\em a priori} estimate on the $L^2$ norm of $u$ due to the special structure of the non-linearity in \eqref{eq:kse2} , which can be written in divergence form. The $L^2$ control  allows to prove global existence by a standard continuation argument \cite{Tad86}. The stability of the zero solution has also been established \cite{Good94}, \cite{NST85}. Differently than for the incompressible Navier-Stokes or Euler equations, this is no longer the case in dimension $d>1$. In fact, there is no known global estimate for any $L^p$ norm. It should be noted that, even in dimension $d=1$, the $L^2$ norm can grown exponentially fast, because the linearized operator
$\pa_t +\Delta^2+\Delta$ admits exponentially growing modes for large enough periods $L_1, \; L_2$. For $d>1$, the issue of global existence of solutions to the KSE is still open. We only consider the classical form of KSE, and not the modified or generalized KSE, for which more results are known (see e.g. \cite{GMP08, Mol00}).

There is an extensive literature concerning the KSE in dimension $d=1$, concerning also analyticity of solutions (see  \cite{CEES93,Gru00} and references therein), and optimal bounds on the growth of the $L^2$ norm as a function of the period $L$ \cite{BG06,GF19,GJO15,GO05,Otto09} (see also \cite{SSArXiv07}). There are a few results for the two and multi-dimensional KSE.  Short time existence and analyticity is known to hold in the full space with data in certain $L^p$ spaces for \eqref{eq:kse} \cite{BS07} (see also \cite{IS16}). There are fewer results dealing with global existence. For $d=2$, global existence holds for thin domains \cite{SellT92,BKRZ14} , and for the anisotropically reduced KSE \cite{LY20}. In \cite{AM19}, the second author and Ambrose studied the differentiated form \eqref{eq:kse2} on  $\TT^2$ and proved short-time existence and analyticity with a bound on the analyticity radius for data in $L^2$ and in the Wiener algebra. They established global existence
of mild solutions for mean-free data $\bu(0)$ sufficiently small in $L^2$ and in the Wiener algebra, but only in the absence of growing modes, which happens when $L_1,\;
 L_2<2/\pi$.

Here, we extend the results in \cite{AM19} in two ways. We prove existence of mild solutions for sufficiently small data $\phi(0)$ in $L^2$ on an arbitrary interval of time $[0,T]$ (that is, $\bu_0$ small in the homogeneous Sobolev space $\dot{H}^{-1}$) for the scalar form \eqref{eq:kse} of KSE in the absence of growing modes, which requires more refined semigroup estimates than for \eqref{eq:kse2}, given the more singular nature of the non-linearity. The mean of the solution is not preserved by the forward evolution as for \eqref{eq:kse2}, but it can be controlled. We also consider the KSE with linear advection and prove that global existence can be achieved in the presence of growing modes and for arbitrary data, if the advecting field $\bv$ is relaxation enhancing with sufficiently small dissipation time. An example of such a flow is an exponentially mixing flow with sufficiently large amplitude. Informally, given a dissipative system, we define its {\em dissipation time} $\tau^\ast$ as the time it takes the system to dissipate a fixed amount of its initial energy (for a precise definition, see Definition \ref{d:dissipationTime}.  Starting with the seminal work of Constatin {\em et Al.} \cite{CKRZ08} (see also \cite{C-ZDEArXiv18,FI19}), it has been recognized that fast advection can have a regularizing effect. Such an effect has been used to prevent blow-up in a number of physical models, such as aggregation models \cite{IXZArXiv19, He18, HT19, KX16} and  reactive flows \cite{CNR08}. There is also an important connection with inviscid damping for incompressible fluids, which we do not discuss in detail (see \cite{BC-T17,Gal18,GNRS20} and references therein).
We should  mention that there are other known mechanisms for stabilization in dissipative equations, such as fast rotation and dispersive effects  (see the recent work  \cite{KTZ18} and references therein).
In \cite{FFITArXiv20}, the first author, Feng, Iyer, and Thiffeault studied the effect of mixing on phase separation in binary mixtures modeled by the Cahn-Hilliard equation, which has the same linear part as the KSE. There they established enhanced dissipation for the advection-hyperdiffusion operator $-\Delta^2  -\bv\cdot \nabla$, when $\bv$ is a strongly mixing incompressible flow.

We confine ourselves to the two-dimensional KSE to avoid being overly technical and to exemplify the effects of advection and mixing, but we expect that similar results hold for the three-dimensional KSE as well. 

Throughout the paper, we use standard notation to represent function spaces. In particular, $H^s(\TT^2)$, $s\in \RR$, is the standard $L^2$-based Sobolev space, and $C([0,\infty))$  is the space of continuous and bounded functions on $[0,\infty)$ with the sup norm. $C$ denotes a generic constant that may change from line to line and depend on $L_1, L_2$. 
If $X$ is a function space on $\TT^2$, $\mathring{X}$ will denote the subspace of all mean-free functions belonging to $X$.

We close the Introduction with an outline of the paper. In Section \ref{s:local}, we study the KSE with advection in the presence of growing modes. We first prove short-time existence of mild solution for arbitrary data in $L^2$, and then show that such solution can be continued for all times, provided the advecting velocity field $\bv$ satisfies certain conditions that ensure the dissipation time of the associated hyperdiffusion-advection equation is sufficiently small. In Section  \ref{s:global}, we contrast this result with global existence for KSE without advection, which we can establish only in the absence of growing modes for sufficiently small initial data.

\section*{Acknowledgments} The authors thank Gautam Iyer for useful discussions. A.M. was partially supported by the US National Science Foundation grants DMS-1909103 and DMS-1615457. Part of this work was conducted while the second author was on leave from Penn State University to New York University-Abu Dhabi.

\section{The KSE with advection} \label{s:local}

In this section we study the KSE in scalar form  with advection \eqref{e:aks} on the torus $\TT^2$. We do not impose any restriction on the  periods, and consequently the linearized operator exhibits exponentially growing modes.  For local existence, we need to assume only that the advecting field  $\bv\in L^\infty((0,\infty);L^2(\TT^2)$, and that $\mathrm{div}\,\bv=0$ in distributional sense. To study global existence, we will assume that $\bv$ is  Lipschitz continuous in space uniformly in time.

Given a function $f\in L^p(\TT^2)$, $p\geq 1$, we denote by $\what{f}(\bk)$  the Fourier coefficient of $f$ at frequency $\bk \in \ZZ^2$. For notational ease as in \cite{AM19}, we let \ $\tbk :=  2\pi
\big(\frac{k_1}{L_1},\frac{k_2}{L_2}\big)$, where $\bk=(k_1,k_2)\in \ZZ^2$, so
$\tbk\in \widetilde\ZZ^2:=2\pi L_1^{-1}\ZZ\times 2\pi L_2^{-1}\ZZ$. We also set \, $\kappa := |\tbk|$.

With slight abuse of notation we denote the k-th Fourier coefficient of $f\in L^2(\TT^2)$ by \ $\what{f}(\tbk)$. Then, we can define equivalent norms in the Sobolev space $H^s(\TT^2)$ and in the homogeneous Sobolev space $\Dot{H}^s(\TT^2)$, $s\in \RR$, by
\begin{align}
      \|f\|^2_{H^s} &=  \sum_{\tbk \in \widetilde \ZZ^2} (1+|\tbk|^2)^{s}\, |\what{f}(\tbk)|^2=
      \|(I-\Delta)^{s/2}\,f\|^2_{L^2},
       \label{eq:HsNorm}\\
       \|f\|^2_{\Dot{H}^s} &=  \sum_{\tbk \in \widetilde \ZZ^2, \tbk\ne \mathbf{0}}  |\tbk|^{2s}\, |\what{f}(\tbk)|^2= \|(-\Delta)^{s/2}\,f\|^2_{L^2}, \label{eq:HsHomNorm}
\end{align}
where $(-\Delta)^{s/2}$ agrees with the Fourier Multiplier with symbol $\kappa^s$, $\kappa \ne 0$ and $I$ is the identity operator. We observe that $f\in H^s \Leftrightarrow f\in \dot H^s$, if $f\in L^2$ and $s\geq 0$.

Given $f\in L^p(\TT^2)$, $p\geq 1$, we define the projection $\PP$ onto the space of mean-zero functions over the torus, that is:
\[
       \what{\PP(f)}(\mathbf{0}) = 0,
\]
and let $\Bar{\phi}$ denote the average of $\phi$ over the torus. This projection is orthogonal on $L^2$,   bounded on the Sobolev space $H^s(\TT^2)$ for $s>0$, and  commutes with any Fourier multiplier.
We then set
\begin{equation} \label{eq:OspacesDef} 
  \rL^2(\TT^2)=\PP(L^2(\TT^2)), \qquad \rH^s(\TT^2)=\PP(H^s(\TT^2)), \quad s> 0.
\end{equation}
The norm in  $\rH^s(\TT^2)$ is equivalent to the seminorm  in the homogeneous
Sobolev spaces $\dot{H}^s(\TT^2)$.

We begin by recalling the notion of mild and weak solutions, which we adapt to our setting, and introduce some short-hand notation that will be used throughout the paper. We define $\cL=\Delta^2+\Delta$, an operator on $L^2$ with domain $H^4(\TT^2)$, and denote with $e^{-t \cL}$ the semigroup generated by $\cL$ on $L^2$, which is explicitly given by:
\[
    e^{-t\cL} f = \mathcal{F}^{-1} ( e^{-t (|\tbk|^4-|\tbk|^2)}\, \what{f})= \mathcal{F}^{-1} ( e^{-t \, \sigma(\tbk) }\, \what{f}),
\]
where $\mathcal{F}$ denotes the Fourier Transform on $\TT^2$ and 
\[
    \sigma(\tbk):= |\tbk|^4-|\tbk|^2=\kappa^4-\kappa^2.
\]
It was shown in \cite{AM19} that $e^{-t\cL}$ generates a $C^0$ and analytic semigroup on $L^2(\TT^2)$, and the following operator norm bounds hold:
\begin{equation} \label{eq:SemigroupBound}
 \begin{aligned}
   \|(-\Delta)^{s/2} e^{-t \cL} f\|_{L^2(\TT^2)} &\leq C\, e^{\frac{t}{2}}
\max\big(1,t^{-\frac{s}{4}}\big) \,
   \|e^{-\frac{t}{2} \cL} f\|_{L^2(\TT^2)} \\
   &\leq C\, e^{t} \max\big(1,t^{-\frac{s+1}{4}\big)}\,
   \|f\|_{L^1(\TT^2)} , \qquad   s>0, \; t>0,
 \end{aligned}
\end{equation}
where $C=C(L_1,L_2,s)$. We briefly recall the simple proof of this estimate  below in Lemma \ref{l:l2l1} and \ref{l:gradestimate} for completeness, and as a starting point for more refined estimates done in Section \ref{s:global}.

We also formally set:
\begin{equation} \label{eq:Bdef}
  B(\phi_1, \phi_2) :=-\frac{1}{2}\int_0^te^{-(t-\tau)\mathcal{L}} \nabla \phi_1(\tau) \cdot \nabla \phi_2(\tau)\,d\tau,
\end{equation}
and
\begin{equation} \label{eq:Ldef}
  L(\phi) := - \int_0^t e^{-(t-\tau)\mathcal{L}}(v(\tau)\cdot \nabla \phi(\tau))\,d\tau.
\end{equation}
Lastly, $\phi(t)$ means the function of $x$, $\phi(t)(x)=\phi(t,x)$.

\begin{definition} \label{d:solutionDef}
A  function $\phi:C([0,T];L^2(\TT^2))$, $0<T\leq \infty$, such that $\nabla \phi$ is locally integrable, is called  a {\em mild solution} of \eqref{e:aks}  on $[0,T]$ with initial data $\phi_0\in L^2(\TT^2)$, if it satisfies
\begin{align} \label{eq:MildDef}
\phi(t)=\mathcal{N}(\phi) (t)&:=e^{-t\mathcal{L}}\phi_0-\frac{1}{2}\int_0^t e^{-(t-\tau)\mathcal{L}}\abs{\nabla\phi (\tau)}^2\,d\tau-\int_0^t e^{-(t-\tau)\mathcal{L}}(\bv\cdot \nabla \phi)\,d\tau \nonumber\\
&=e^{-t\mathcal{L}}\phi_0+B(\phi, \phi)(t) + L(\phi)(t), \qquad 0\leq t\leq T,
\end{align}
pointwise in time with values in $L^2$, where the integral is intended in the B\"ochner sense (see e.g. \cite{Pazy}).\newline
A function $\phi \in L^\infty([0,T];L^2(\TT^2))\cap L^2([0,T];H^2(\TT^2))$ is called a {\em weak solution} of  \eqref{e:aks}  on $[0,T)$ with initial value $\phi(0)=\phi_0\in L^2(\TT^2)$ if, for all $\varphi \in C^\infty_c([0,T)\times\TT^2)$,
\begin{align} \label{eq:weakDef}
\int_{\TT^2} \phi_0\varphi(0)\,dx+& \int_{0}^T\int_{\TT^2} \phi\, \pa_t\varphi \, dx\, dt=  \int_{0}^T\int_{\TT^2}  \Delta \phi\, \Delta \varphi \, dx\, dt\nonumber  - 
  \int_{0}^T\int_{\TT^2}  \nabla \phi\, \nabla \varphi \, dx\, dt  \\
  & + \frac{1}{2} \int_{0}^T\int_{\TT^2}   |\nabla \phi|^2 \,\varphi\,dx\,dt+\int_{0}^T\int_{\TT^2}  \varphi\,\bv\cdot \nabla \phi \, dx\, dt\,,
\end{align}
and $\partial_t\phi\in L^2([0 , T];H^{-2}(\T^2))$.
\end{definition}

Mild solutions are formally fixed points of the non-linear map $\mathcal{N}$, and \eqref{eq:MildDef} is in the form of a Volterra integral equation. A standard way to obtain mild solutions is therefore to apply the Banach Contraction Mapping Theorem.

\subsection{Short-time existence with data in $L^2$} \label{s:localExistence}
 
 We establish the existence of solutions to \eqref{eq:MildDef} on a small time interval $[0,T]$, $0<T\leq 1$, for arbitrary initial data $\phi_0\in L^2(\TT^2)$, by proving that the map $\mathcal{N}$ is a contraction in a suitable adapted Banach space $\tilde{X}_T$. Because of the presence of growing modes, one does not obtain global existence for small data by this method.   
 We need to introduce an adapted space, among other reasons, to make sense of the nonlinearity, which requires $\nabla \phi$ to be locally integrable (with or without advection). By contrast, in the vector form \eqref{eq:kse2} of the KSE, the non-linearity is a well-defined distribution if $u(t)$ is in $L^2(\TT^2)$ pointwise in time, and one can prove local existence of solutions just using the space $C([0,T];L^2)$ for the contraction argument (cf. \cite{AM19}).

Given $0<T<1 $, we define the space
\begin{align}
X_{T} : =\set{\phi: \T^2\times \R_{+}\to \R \st  \sup_{0< t\leq T} t^{\frac{1}{4}}\, \norm{\nabla \phi}_{L^2} < \infty}\,,
\end{align}
and let
\begin{equation}
   \tX_T := C([0, T]; L^2(\T^2)) \cap X_T, 
\end{equation}
which is a Banach space equipped with the norm:
\begin{align}
\norm{\phi}_{\tX_T}=\max(\sup_{0\leq t\leq T}\norm{\phi}_{L^2}\,,\sup_{0<t\leq T}t^{\frac{1}{4}}\norm{\nabla \phi}_{L^2}).
\end{align}


We will verify that $\mathcal{N}$ is a contraction map on a ball in $\tX_T$, using the bounds on the semigroup $e^{-t\cL}$. 
For completeness, we briefly recall the proof of the two needed estimates, which we will refine in Section \ref{s:global} to obtain global existence for \eqref{eq:kse} in the absence of growing modes.
We define the set  $S=\set{\tbk\in \tilde \Z^2 \st \abs{\tbk}^2\geq \frac{1}{2}\abs{\tbk}^4}$, then $S$ is a finite set. When $\tbk\in S$ , we have $|\tbk|\leq c_0$ and $\sigma(\tbk)\geq -1/4$. Otherwise, it holds that $\sigma(\tbk) \geq |\tbk|^4/2$.

\begin{lemma}\label{l:l2l1}
For any $0<t<1$, there exists a constant $C=C(L_1,L_2)$ such that 
\begin{align}
\norm{e^{-t\mathcal{L}}f}_{L^2}\leq C t^{-\frac{1}{4}}\norm{f}_{L^1}\,.
\end{align}
\end{lemma}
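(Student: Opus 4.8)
The plan is to work entirely on the Fourier side, where $e^{-t\cL}$ acts as multiplication by $e^{-t\sigma(\tbk)}$. By Plancherel's theorem,
\[
\norm{e^{-t\cL}f}_{L^2}^2 = \sum_{\tbk\in\widetilde\ZZ^2} e^{-2t\sigma(\tbk)}\,\abs{\what{f}(\tbk)}^2,
\]
and since every Fourier coefficient obeys the elementary bound $\abs{\what{f}(\tbk)}\leq C_0\norm{f}_{L^1}$ (with $C_0=C_0(L_1,L_2)$ absorbing the normalization of the transform on $\TT^2$), it suffices to control $\sum_{\tbk} e^{-2t\sigma(\tbk)}$ by $C\,t^{-1/2}$ for $0<t<1$; taking a square root then yields the stated $t^{-1/4}$ bound.

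To estimate the sum I would split it over the finite set $S$ and its complement, exactly as set up before the lemma. For $\tbk\in S$ one has $\sigma(\tbk)\geq -1/4$, so $e^{-2t\sigma(\tbk)}\leq e^{t/2}\leq e^{1/2}$, and since $S$ is finite this part contributes at most a constant $C=C(L_1,L_2)$. For $\tbk\notin S$ one has $\sigma(\tbk)\geq\abs{\tbk}^4/2$, hence $e^{-2t\sigma(\tbk)}\leq e^{-t\abs{\tbk}^4}$, and it remains to bound $\sum_{\tbk} e^{-t\abs{\tbk}^4}$.

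The heart of the matter is the comparison of this lattice sum to the Gaussian-type integral over $\RR^2$. Since $r\mapsto e^{-tr^4}$ is radially decreasing, a standard lattice-to-integral comparison gives
\[
\sum_{\tbk\in\widetilde\ZZ^2} e^{-t\abs{\tbk}^4} \leq C\int_{\RR^2} e^{-t\abs{\xi}^4}\,d\xi + C,
\]
and the scaling $\xi\mapsto t^{-1/4}\xi$ evaluates the integral as $t^{-1/2}\int_{\RR^2}e^{-\abs{\eta}^4}\,d\eta = C\,t^{-1/2}$, the remaining integral being finite by passing to polar coordinates, $\int_{\RR^2}e^{-\abs{\eta}^4}\,d\eta = 2\pi\int_0^\infty e^{-\rho^4}\rho\,d\rho<\infty$. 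Collecting the two pieces, for $0<t<1$ both the finite-set contribution and the additive constant are dominated by $t^{-1/2}$, so that $\sum_{\tbk}e^{-2t\sigma(\tbk)}\leq C\,t^{-1/2}$, which completes the argument.

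I expect the only delicate point to be making the lattice-to-integral comparison fully rigorous for small $t$: because $e^{-t\abs{\xi}^4}$ is not monotone along the coordinate directions, one cannot simply read it off a one-dimensional telescoping. The clean way is to assign to each $\tbk$ the lattice cell of area $(2\pi)^2/(L_1L_2)$ centered at it and to control the oscillation of $e^{-t\abs{\xi}^4}$ across a single cell by a $t$-independent factor, using that the cell diameter is fixed, so that the cell values are comparable to the integral; the finitely many cells nearest the origin are absorbed into the additive constant. Everything else is routine.
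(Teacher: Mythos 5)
Your proposal is correct and follows essentially the same route as the paper's proof: Plancherel, the bound $\abs{\what f(\tbk)}\leq C\norm{f}_{L^1}$, the split over the finite set $S$ and its complement, and comparison of the tail sum with $\int_{\RR^2}e^{-t\abs{\xi}^4}\,d\xi\lesssim t^{-1/2}$. Your closing remark on making the lattice-to-integral comparison rigorous is a sound way to justify a step the paper states without comment.
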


\begin{proof}
By definition and Plancherel's identity, we have
\[
  \begin{aligned}
    \norm{e^{-t\mathcal{L}} &f}_{L^2}^2=\sum_{\tbk\in \tilde \Z^2} e^{-2t\,\sigma(\tbk)} \, |\hat f(\tbk)|^2 \leq (\sup_{\tbk\in    
    \tilde \Z^2} |\what f(\tbk)|)^2 \sum_{\tbk\in \tilde \Z^2} e^{-2t\, \sigma(\tbk)}\\
    &\leq  C\, \norm{f}_{L^1}^2(\sum_{\tbk\in S} e^{-2t\,\sigma(\tbk)}  +\sum_{\tbk \in \tilde \Z^2\setminus S} e^{-2t\,\sigma(\tbk)})
     \leq C\, \norm{f}_{L^1}^2 (e^{t/2}+\int_{\R^2} e^{-t|x|^4}\,dx) \\
     &\leq  C\, \norm{f}_{L^1}^2(e^{t/2}+t^{-1/2}) \leq C\, t^{-1/2}\norm{f}_{L^1}^2\,.
  \end{aligned}
\]
\end{proof}

\begin{lemma}\label{l:gradestimate}
For any $s>0$ and $0<t<1$, there exists a constant $C=C(L_1,L_2)$ such that
\begin{align}
\norm{(-\Delta)^{s/2}e^{-t\mathcal{L}} f}_{L^2} \leq C \,t^{-s/4} \, \norm{f}_{L^2}\,.
\end{align}
\end{lemma}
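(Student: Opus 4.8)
The plan is to reduce the estimate to a pointwise bound on the Fourier multiplier, exactly in the spirit of Lemma \ref{l:l2l1}. Since $(-\Delta)^{s/2}$ and $e^{-t\cL}$ act as the Fourier multipliers with symbols $\kappa^s$ and $e^{-t\sigma(\tbk)}$ respectively, Plancherel's identity gives
\begin{align*}
\norm{(-\Delta)^{s/2}e^{-t\cL}f}_{L^2}^2 = \sum_{\tbk\in \tilde \Z^2}\kappa^{2s}\,e^{-2t\sigma(\tbk)}\,\abs{\what f(\tbk)}^2 \leq \Big(\sup_{\tbk\in \tilde \Z^2}\kappa^{2s}e^{-2t\sigma(\tbk)}\Big)\,\norm{f}_{L^2}^2.
\end{align*}
Thus it suffices to prove the uniform symbol bound $\kappa^{2s}e^{-2t\sigma(\tbk)}\leq C\,t^{-s/2}$ for all $\tbk$ and all $0<t<1$; taking square roots then produces exactly the factor $t^{-s/4}$ in the statement.

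To establish the symbol bound I would split the supremum over the finite set $S$ and its complement, using the dichotomy recorded just before Lemma \ref{l:l2l1}. For $\tbk\in S$ we have $\kappa\leq c_0$ and $\sigma(\tbk)\geq -1/4$, so $\kappa^{2s}e^{-2t\sigma(\tbk)}\leq c_0^{2s}e^{t/2}\leq c_0^{2s}e^{1/2}$; since $t<1$ forces $t^{-s/2}\geq 1$, this is bounded by $C\,t^{-s/2}$. For $\tbk\notin S$ we use $\sigma(\tbk)\geq \kappa^4/2$, whence $\kappa^{2s}e^{-2t\sigma(\tbk)}\leq \kappa^{2s}e^{-t\kappa^4}$. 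The substitution $x=t\kappa^4$ rewrites the right-hand side as $t^{-s/2}\,x^{s/2}e^{-x}$, and the elementary maximization $\sup_{x\geq 0}x^{s/2}e^{-x}=\left(\tfrac{s}{2e}\right)^{s/2}<\infty$ yields the claimed $C(s)\,t^{-s/2}$ bound. Combining the two cases completes the argument.

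There is no genuine obstacle here: the whole content is the elementary inequality $\sup_{x\geq 0}x^a e^{-x}<\infty$ together with the fact that the biharmonic symbol $\kappa^4$ dominates the anti-dissipative $-\Delta$ term at high frequency. The only point needing care — and the reason the statement is restricted to $0<t<1$ — is the low-frequency regime $\tbk\in S$, where $\sigma$ can be negative (these are precisely the growing modes); there the estimate survives simply because $S$ is finite and $e^{-2t\sigma(\tbk)}$ stays bounded on the unit time interval. This same splitting is exactly what must be sharpened in Section \ref{s:global} to control the constants uniformly in $t$ once growing modes are assumed absent.
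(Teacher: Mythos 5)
Your proof is correct and follows essentially the same route as the paper: both reduce to a pointwise bound on the multiplier via Plancherel, split frequencies according to the finite set $S$ versus its complement, use $\sigma(\tbk)\geq -1/4$ and $\kappa\leq c_0$ on $S$, and use $\sigma(\tbk)\geq \kappa^4/2$ together with the elementary maximization $\sup_{x\geq 0}x^{s/2}e^{-x}<\infty$ off $S$. The only cosmetic difference is that the paper keeps the sum and extracts the supremum term by term rather than bounding the symbol uniformly first; also note the constant in fact depends on $s$ as well, as the paper acknowledges in \eqref{eq:SemigroupBound}.
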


\begin{proof}
Again, by Plancherel's identity: 
\[
   \begin{aligned}
 \norm{(-\Delta)^{s/2}e^{-t\mathcal{L}}f}_{L^2}^2
&=\sum_{\tbk\in S} |\tbk|^{2s} e^{-2t \sigma(\tbk)} |\what f(\tbk)|^2+\sum_{\tbk\in \tilde \Z^2\setminus S} \abs{\tbk}^{2s}e^{-2t \sigma(\tbk)} |\what f(\tbk)|^2\\
&\leq C\, \left(e^{t/2}\sum_{\tbk\in S} |\what f(\tbk)|^2+ \sum_{\tbk\in \tilde \Z^2\setminus S} |\tbk|^{2s} \,e^{-t|\tbk|^4}\, 
|\what f (\tbk)|^2\right)\\
&\leq C\, \left[ e^{t/2}\sum_{\tbk\in S} |\hat f(\tbk)|^2+ \paren[\Big]{\sup_{x\in \R_{+}}e^{-tx^4}x^{2s}}\sum_{\tbk\in \tilde \Z^2-S}
 |\what f(\tbk)|^2\right] \\
&\leq  C\, \left(e^{t/2} + t^{-s/2}\right) \norm{f}_{L^2}^2 \leq C\, t^{-s/2}\, \norm{f}_{L^2}^2\,.
  \end{aligned}
\]
\end{proof}

We now state the main result of this section.

\begin{theorem}  \label{t:localAKS}
 Let $\phi_0\in L^2(\TT^2)$ and let $\bv\in L^\infty(\R_+;L^2(\TT^2))$. There exists $0<T\leq 1$ depending on $L_1, L_2$, $\sup_{t>0}\|\bv\|_{L^2}$, and on $\|\phi_0\|_{L^2}$ such that \eqref{e:aks} admits a mild solution 
 $\phi$ on $[0,T]$, which is unique in $\tX_T$.
\end{theorem}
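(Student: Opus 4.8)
The plan is to realize the mild solution as the unique fixed point of the map $\mathcal{N}$ in \eqref{eq:MildDef} by the Banach Contraction Mapping Theorem, applied to the closed ball $\mathcal{B}_R=\{\phi\in\tX_T:\|\phi\|_{\tX_T}\le R\}$ with radius $R$ proportional to $\|\phi_0\|_{L^2}$ and time horizon $T\in(0,1]$ to be fixed small at the end. Writing $\mathcal{N}(\phi)=e^{-t\cL}\phi_0+B(\phi,\phi)+L(\phi)$, I first control the free evolution: since $\sigma(\tbk)\ge-1/4$ one has $\|e^{-t\cL}\phi_0\|_{L^2}\le e^{t/4}\|\phi_0\|_{L^2}$, while Lemma~\ref{l:gradestimate} with $s=1$ gives $\|\nabla e^{-t\cL}\phi_0\|_{L^2}\le Ct^{-1/4}\|\phi_0\|_{L^2}$; multiplying by the weight $t^{1/4}$ and using $T\le1$ yields $\|e^{-t\cL}\phi_0\|_{\tX_T}\le C\|\phi_0\|_{L^2}$. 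The heart of the matter is then to show that $B$ and $L$ are small perturbations, in the sense that each carries a positive power of $T$.

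For the bilinear term I would estimate the two constituents of the $\tX_T$-norm separately. The pointwise input is that, by Cauchy--Schwarz and the definition of the norm, $\nabla\phi_1(\tau)\cdot\nabla\phi_2(\tau)\in L^1(\TT^2)$ with $\|\nabla\phi_1(\tau)\cdot\nabla\phi_2(\tau)\|_{L^1}\le\|\nabla\phi_1(\tau)\|_{L^2}\|\nabla\phi_2(\tau)\|_{L^2}\le\tau^{-1/2}\|\phi_1\|_{\tX_T}\|\phi_2\|_{\tX_T}$. For the $L^2$-component I apply Lemma~\ref{l:l2l1} inside the Duhamel integral \eqref{eq:Bdef}, obtaining $\|e^{-(t-\tau)\cL}(\nabla\phi_1\cdot\nabla\phi_2)\|_{L^2}\le C(t-\tau)^{-1/4}\tau^{-1/2}\|\phi_1\|_{\tX_T}\|\phi_2\|_{\tX_T}$, and the time integral $\int_0^t(t-\tau)^{-1/4}\tau^{-1/2}\,d\tau$ is a convergent Beta integral equal to $Ct^{1/4}$. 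For the gradient-component I instead use the refined bound in the second line of \eqref{eq:SemigroupBound} with $s=1$, namely $\|(-\Delta)^{1/2}e^{-(t-\tau)\cL}f\|_{L^2}\le C(t-\tau)^{-1/2}\|f\|_{L^1}$ for $t-\tau<1$ (equivalently, split $e^{-(t-\tau)\cL}$ into two half-time factors and chain Lemmas~\ref{l:l2l1} and \ref{l:gradestimate}), which gives $\|\nabla B(\phi_1,\phi_2)(t)\|_{L^2}\le C\|\phi_1\|_{\tX_T}\|\phi_2\|_{\tX_T}\int_0^t(t-\tau)^{-1/2}\tau^{-1/2}\,d\tau$. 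Accounting for the prefactor $t^{1/4}\le T^{1/4}$ in both cases, both components obey $\|B(\phi_1,\phi_2)\|_{\tX_T}\le CT^{1/4}\|\phi_1\|_{\tX_T}\|\phi_2\|_{\tX_T}$.

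The advection term $L(\phi)$ in \eqref{eq:Ldef} is handled identically, now using $\|\bv(\tau)\cdot\nabla\phi(\tau)\|_{L^1}\le V\,\tau^{-1/4}\|\phi\|_{\tX_T}$ with $V:=\sup_{t>0}\|\bv\|_{L^2}$; the Beta integrals $\int_0^t(t-\tau)^{-1/4}\tau^{-1/4}\,d\tau$ and $\int_0^t(t-\tau)^{-1/2}\tau^{-1/4}\,d\tau$ yield a gain of $T^{1/2}$, so $\|L(\phi)\|_{\tX_T}\le CVT^{1/2}\|\phi\|_{\tX_T}$. Combining the three estimates, $\|\mathcal{N}(\phi)\|_{\tX_T}\le C\|\phi_0\|_{L^2}+CT^{1/4}\|\phi\|_{\tX_T}^2+CVT^{1/2}\|\phi\|_{\tX_T}$; choosing $R=2C\|\phi_0\|_{L^2}$ and then $T$ small enough (depending on $L_1,L_2$, $V$, and $\|\phi_0\|_{L^2}$) that $CT^{1/4}R+CVT^{1/2}\le1/2$ makes $\mathcal{N}$ map $\mathcal{B}_R$ into itself. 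For the contraction estimate I would use the bilinearity of $B$ via $B(\phi_1,\phi_1)-B(\phi_2,\phi_2)=B(\phi_1-\phi_2,\phi_1)+B(\phi_2,\phi_1-\phi_2)$ and the linearity of $L$; the same bounds give $\|\mathcal{N}(\phi_1)-\mathcal{N}(\phi_2)\|_{\tX_T}\le(2CT^{1/4}R+CVT^{1/2})\|\phi_1-\phi_2\|_{\tX_T}$, and shrinking $T$ makes the factor strictly less than $1$. The Banach Contraction Mapping Theorem then produces a unique fixed point in $\mathcal{B}_R$; continuity in time with values in $L^2$ (so that indeed $\mathcal{N}$ maps into $\tX_T$) follows from strong continuity of the semigroup and absolute convergence of the Duhamel integrals, and uniqueness in all of $\tX_T$ follows by comparing any two solutions on a possibly shorter subinterval where the Lipschitz constant is $<1$ and iterating.

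I expect the delicate point to be the gradient component of the bilinear estimate: there the integral $\int_0^t(t-\tau)^{-1/2}\tau^{-1/2}\,d\tau$ is only borderline convergent and is independent of $t$, so it produces no power of $t$ on its own --- the smallness in $T$ comes entirely from the $t^{1/4}$ weight in the norm. This forces the use of the sharp $L^1\to\dot{H}^1$ smoothing bound with exponent $-1/2$, which is precisely why the local theory must be set in the weighted space $\tX_T$ rather than in $C([0,T];L^2)$ alone: the singular nonlinearity $|\nabla\phi|^2$ can neither be given meaning nor estimated without quantitative control of $\nabla\phi(\tau)$ in $L^2$ carrying the integrable singular weight $\tau^{-1/4}$ as $\tau\to0^+$. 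Keeping $T\le1$ is what allows the $e^t$ factors in \eqref{eq:SemigroupBound} to be absorbed into $C$.
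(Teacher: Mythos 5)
Your proposal is correct and follows essentially the same route as the paper: a contraction argument for $\mathcal{N}$ on a ball of radius $2C\|\phi_0\|_{L^2}$ in $\tX_T$, with the $L^2$-component controlled via the $L^1\to L^2$ smoothing of Lemma \ref{l:l2l1} and the gradient component via the chained half-time factors giving the $(t-\tau)^{-1/2}$ bound, the nonlinearity measured in $L^1$ by Cauchy--Schwarz, and the Lipschitz estimate obtained from the bilinearity of $B$. Your closing observation --- that the borderline Beta integral $\int_0^t(t-\tau)^{-1/2}\tau^{-1/2}\,d\tau$ yields no power of $t$ and the smallness must come from the $t^{1/4}$ weight --- is exactly the mechanism at work in the paper's Lemma \ref{l:close}.
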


We split the proof in several parts.  

\begin{lemma}\label{l:close}
 Let $0<T\leq 1$. The map $\mathcal{N}: \tX_T\to \tX_T$ and there exists $C=C(L_1,L_2)>0$,  such that 
\begin{equation}\label{e:Fgrowth}
   \norm{\mathcal  N (\phi)}_{\tX_T} \leq C\,\left(\norm{\phi_0}_{L^2}+T^{1/4}\, \norm{\phi}_{\tX_T}^2+T^{1/2}\,\norm{\bv}_{L^\infty(
   \R _+;L^2)}\,\norm{\phi}_{\tX_T}\right)\,.
\end{equation}
\end{lemma}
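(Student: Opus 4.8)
The plan is to estimate the three pieces of $\mathcal N(\phi)=e^{-t\cL}\phi_0+B(\phi,\phi)+L(\phi)$ separately in each of the two quantities defining the $\tX_T$ norm, namely $\sup_{0\le t\le T}\|\cdot\|_{L^2}$ and $\sup_{0<t\le T}t^{1/4}\|\nabla\cdot\|_{L^2}$, and then to collect the bounds. For the linear term, the decomposition of $\tilde\Z^2$ into $S$ and its complement gives $e^{-2t\sigma(\tbk)}\le e^{t/2}\le C$ for $t\le 1$, hence $\|e^{-t\cL}\phi_0\|_{L^2}\le C\|\phi_0\|_{L^2}$, while Lemma \ref{l:gradestimate} with $s=1$ yields $t^{1/4}\|\nabla e^{-t\cL}\phi_0\|_{L^2}\le C\|\phi_0\|_{L^2}$. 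This accounts for the $\|\phi_0\|_{L^2}$ contribution in \eqref{e:Fgrowth}.

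The key observation for the quadratic and advection terms is that both Duhamel integrands lie only in $L^1$. Since $\nabla\phi(\tau)\in L^2$ with $\|\nabla\phi(\tau)\|_{L^2}\le\tau^{-1/4}\|\phi\|_{\tX_T}$, one has $\||\nabla\phi(\tau)|^2\|_{L^1}=\|\nabla\phi(\tau)\|_{L^2}^2\le\tau^{-1/2}\|\phi\|_{\tX_T}^2$, and by Cauchy--Schwarz $\|\bv(\tau)\cdot\nabla\phi(\tau)\|_{L^1}\le\|\bv\|_{L^\infty(\R_+;L^2)}\,\tau^{-1/4}\|\phi\|_{\tX_T}$. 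This is precisely why one must route the estimates through the $L^1$-based bounds --- Lemma \ref{l:l2l1} for the $L^2$ norm and the second line of \eqref{eq:SemigroupBound} with $s=1$, which gives $\|\nabla e^{-t\cL}f\|_{L^2}\le C t^{-1/2}\|f\|_{L^1}$ for $0<t\le 1$, for the gradient --- rather than Lemma \ref{l:gradestimate}, whose $L^2$-valued input is unavailable because $|\nabla\phi|^2$ and $\bv\cdot\nabla\phi$ are not controlled in $L^2$.

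Inserting these pointwise-in-$\tau$ bounds reduces every contribution to an Euler Beta integral $\int_0^t(t-\tau)^{-a}\tau^{-b}\,d\tau=t^{1-a-b}\,\mathrm{B}(1-a,1-b)$, all of which converge since each exponent is $<1$. Concretely, for $B(\phi,\phi)$ the $L^2$ bound uses $(a,b)=(\tfrac14,\tfrac12)$ producing $t^{1/4}$, and the gradient bound uses $(\tfrac12,\tfrac12)$ producing $t^0$, so that $t^{1/4}\|\nabla B(\phi,\phi)\|_{L^2}\le C t^{1/4}\|\phi\|_{\tX_T}^2$; both are $\le C T^{1/4}\|\phi\|_{\tX_T}^2$. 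For $L(\phi)$ the $L^2$ bound uses $(\tfrac14,\tfrac14)$ producing $t^{1/2}$, and the gradient bound uses $(\tfrac12,\tfrac14)$ producing $t^{1/4}$, so that $t^{1/4}\|\nabla L(\phi)\|_{L^2}\le C t^{1/2}\|\bv\|_{L^\infty(\R_+;L^2)}\|\phi\|_{\tX_T}$; both are $\le C T^{1/2}\|\bv\|_{L^\infty(\R_+;L^2)}\|\phi\|_{\tX_T}$. Summing the three contributions gives \eqref{e:Fgrowth}.

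Finally, to confirm that $\mathcal N$ maps into $\tX_T$ and not merely bounded functions, I would check continuity in time with values in $L^2$: the linear term is continuous on $[0,T]$ by strong continuity of the analytic semigroup $e^{-t\cL}$, and the two Duhamel integrals are continuous (and vanish as $t\to 0^+$) by dominated convergence using the integrable kernels just obtained; the $X_T$ part requires no continuity, only the finiteness of the weighted supremum, which the Beta-integral bounds supply. I expect the genuine obstacle to be not this continuity but the correct bookkeeping of the exponents so that every singular kernel stays integrable and the $L^1\to\dot H^1$ bound is used in place of the cleaner $L^2$-based gradient estimate, a detour forced by the more singular quadratic nonlinearity of the scalar form \eqref{eq:kse}.
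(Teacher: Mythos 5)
Your proposal is correct and follows essentially the same route as the paper's proof: the same $L^1$-based semigroup bounds (Lemma \ref{l:l2l1} for the $L^2$ part and the composite $L^1\to\dot H^1$ bound, which the paper obtains by splitting $e^{-(t-\tau)\cL}$ into two half-time factors), the same pointwise-in-$\tau$ estimates on $\||\nabla\phi|^2\|_{L^1}$ and $\|\bv\cdot\nabla\phi\|_{L^1}$, and the same Beta-integral exponents yielding the $T^{1/4}$ and $T^{1/2}$ factors. The continuity-in-time argument via the B\"ochner integral likewise matches the paper's.
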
  

\begin{proof}
The proof follows by establishing the following two claims:
\begin{itemize}[]
\item[\textit{Claim 1.}] If $\phi\in \tX_T$, then $\mathcal{N}(\phi) \in C([0,T];L^2(\T^2))$.
\item[\textit{Claim 2.}] If $\phi\in \tX_T$, then $\sup_{0<t\leq T}t^{1/4}\norm{\nabla( \mathcal{N}(\phi))}_{L^2} <\infty$.
\end{itemize}
First, we observe that the fact that $e^{-t\cL}$ generates a strongly continuous semigroup and the norm estimates in Lemma \ref{l:l2l1} and \ref{l:gradestimate} imply that $e^{-t\cL} \phi_0$ is in $\tX_T$ for any fixed $T>0$. Next, from the definition and properties of the B\"ochner integral (see e.g. \cite{Pazy}), the integral  on the right-hand side of \eqref{eq:MildDef} is well defined and belongs to $C([0,T];L^2(\TT^2))$ provided the $L^2(\TT^2)$ norm of the terms under the integral sign belongs to $L^1((0,T))$.
For any $0<t\leq T$, we have, in fact, again from the semigroup estimates:
\[
\begin{aligned}
\norm{\mathcal{N}(\phi)(t)}_{L^2}&\leq C\,\left(\norm{\phi_0}_{L^2}+\int_0^t\norm{e^{-(t-\tau)\mathcal{L}}\, (|\nabla \phi(\tau)|^2+ \bv(\tau)\cdot \nabla \phi(\tau))}_{L^2}\,d\tau\right)\\
&\leq C\, \left(\norm{\phi_0}_{L^2}+\int_0^t(t-\tau)^{-1/4}\norm{|\nabla \phi(\tau)|^2 +\bv(\tau)\cdot \nabla \phi(\tau)}_{L^1}\,d\tau \right)\\
&\leq C\,\left(\norm{\phi_0}_{L^2}+\int_0^t (t-\tau)^{-1/4} \tau^{-1/2} (\tau^{1/4}\norm{\nabla \phi(\tau)}_{L^2})^2\,d\tau\right.\\
&\left. \quad \qquad+\int_0^t(t-\tau)^{-1/4}\,\tau^{-1/4} \norm{\bv(\tau)}_{L^2}(\tau^{1/4}\,\norm{\nabla \phi(\tau)}_{L^2})\,d\tau\right)\\
&\leq C\,\left(\norm{\phi_0}_{L^2}+ t^{1/4}\int_0^1(1-\bar\tau)^{-1/4} \bar\tau^{-1/2} \norm{ \phi}_{\tX_T}^2\,d\bar\tau\right.\\
&\left. \quad \qquad+t^{1/2}\int_0^1(1-\bar\tau)^{-1/4}\,\bar\tau^{-1/4} \norm{\bv}_{L^\infty(\R_+;L^2)}\,\norm{\phi}_{\tX_T})\,d\bar\tau\right) \\
&\leq C\, \left( \norm{\phi_0}_{L^2}+t^{1/4}\norm{\phi}_{\tX_T}^2+t^{1/2}\norm{\bv}_{L^\infty(\R_+;L^2)}\,
\norm{\phi}_{\tX_T}\right)\,,
\end{aligned}
\]
where $\Bar\tau=\tau/t$. This proves \textit{Claim 1} taking the supremum over $t$.

 Similarly, from the properties of the semigroup generated by $\cL$, we have:
\[
\begin{aligned}
\norm{\nabla (\mathcal {N}(\phi))(t)}_{L^2} &\leq \norm{e^{-t\mathcal{L}}\nabla \phi_0}_{L^2}+\frac{1}{2}\int_0^t\norm{\nabla e^{-\frac{t-\tau}{2}\mathcal{L}}}_{L^2\to L^2}\, \norm{e^{-\frac{t-\tau}{2}\mathcal{L}}|\nabla \phi(\tau)|^2}_{L^2}\,d\tau\\
&\quad \qquad+\int_0^t\norm{\nabla e^{-\frac{t-\tau}{2}\mathcal{L}}}_{L^2\to L^2}\, \norm{e^{-\frac{t-\tau}{2}\mathcal{L}}
\,(\bv(\tau)\cdot \nabla \phi(\tau))}_{L^2}\,d\tau\\
&\leq C \, t^{-1/4}\,\norm{\phi_0}_{L^2}+C \,\int_0^t (t-\tau)^{-1/2}\,\tau^{-1/2}(\tau^{1/4}\norm{\nabla \phi(\tau)}_{L^2})^2\,d\tau\\
&\quad\qquad+C \int_0^t (t-\tau)^{-1/2}\,\tau^{-1/4}\norm{\bv(\tau)}_{L^2}(\tau^{1/4}\norm{\nabla\phi(\tau)}_{L^2})\,d\tau\\
&\leq C\, t^{-1/4}\,\norm{\phi_0}_{L^2}+C \left(\int_0^1(1-\Bar\tau)^{-1/2} \,\Bar\tau^{-1/2} \,d\Bar\tau\right) \norm{\phi}_{\tX_T}^2\\
&\quad\qquad+C \,\left( t^{1/4}\int_0^1(1-\Bar\tau)^{-1/2}\,\Bar\tau^{-1/4}\,d\Bar\tau\right)\norm{\bv}_{L^\infty(\R_+;L^2)}\norm{\phi}_{\tX_T}\\
&\leq C \left(t^{-1/4}\,\norm{\phi_0}_{L^2}+\norm{\phi}_{\tX_T}^2+t^{1/4}\norm{\bv}_{L^\infty(\R_+;L^2)}\norm{\phi}_{\tX_T}\right),
\end{aligned}
\]
where $\Bar\tau=\tau/t$. Consequently 
\begin{equation}
\sup_{0<t\leq T}t^{1/4}\norm{\nabla (\mathcal{N}(\phi))}_{L^2} \leq C\norm{\phi_0}_{L^2}+CT^{1/4}\norm{\phi}_{\tX_T}^2+CT^{1/2}\norm{\bv}_{L^\infty(\R_+;L^2)}\norm{\phi}_{\tX_T}\,.
\end{equation}
This proves \textit{Claim 2} and thus concludes the proof of this lemma.
\end{proof}

 Next, we prove that the map $\mathcal{N}$ is Lipschitz on $\tX_T$ with a constant that depends on $T$.

\begin{lemma}\label{l:diff}
Let $0<T\leq 1$. There exists a constant $C=C(L_1,L_2)$ such that, for any $\phi_1,\phi_2 \in \tX_T$,
\begin{equation}\label{e:diff}
  \begin{aligned}
  \norm{\mathcal N (\phi_1)-& \mathcal{N}(\phi_2)}_{\tX_T}\leq \\ \nopagebreak[4]
   &C\,T^{1/4}\,
 \big(\norm{\phi_1}_{\tX_T}+\norm{\phi_2}_{\tX_T}+T^{1/4}\,\norm{\bv}_{L^\infty(\R_+;L^2)}\big)
 \,\norm{ \phi_1-\phi_2}_{\tX_T}.
 \end{aligned}
\end{equation}

\end{lemma}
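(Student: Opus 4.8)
The plan is to exploit the bilinear structure of the two nonlinear contributions to $\mathcal{N}$. Since $e^{-t\cL}\phi_0$ cancels in the difference $\mathcal{N}(\phi_1)-\mathcal{N}(\phi_2)$, I only need to estimate $B(\phi_1,\phi_1)-B(\phi_2,\phi_2)$ and $L(\phi_1)-L(\phi_2)$. For the quadratic term I would use the standard bilinear polarization
\[
  B(\phi_1,\phi_1)-B(\phi_2,\phi_2)=B(\phi_1-\phi_2,\phi_1)+B(\phi_2,\phi_1-\phi_2),
\]
which reduces everything to estimating a single bilinear expression $B(\psi,\chi)$ in terms of $\norm{\psi}_{\tX_T}\norm{\chi}_{\tX_T}$. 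The advection term $L$ is already linear, so $L(\phi_1)-L(\phi_2)=L(\phi_1-\phi_2)$ requires no algebraic manipulation.

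The core estimates are then identical in spirit to those already carried out in Lemma \ref{l:close}, simply applied to differences. For the $L^2$ part of the $\tX_T$-norm I would bound, for $0<t\le T$,
\[
  \norm{B(\psi,\chi)(t)}_{L^2}\le C\int_0^t (t-\tau)^{-1/4}\norm{\nabla\psi(\tau)\cdot\nabla\chi(\tau)}_{L^1}\,d\tau
\]
using Lemma \ref{l:l2l1}, then apply Cauchy-Schwarz pointwise in $x$ to get $\norm{\nabla\psi\cdot\nabla\chi}_{L^1}\le\norm{\nabla\psi}_{L^2}\norm{\nabla\chi}_{L^2}\le\tau^{-1/2}\norm{\psi}_{\tX_T}\norm{\chi}_{\tX_T}$, and finally rescale via $\bar\tau=\tau/t$ to extract the factor $t^{1/4}\,\mathrm{Beta}(3/4,1/2)$. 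For the advection difference I would similarly use $\norm{\bv\cdot\nabla(\phi_1-\phi_2)}_{L^1}\le\norm{\bv}_{L^2}\norm{\nabla(\phi_1-\phi_2)}_{L^2}$, giving a $t^{1/2}\norm{\bv}_{L^\infty(\R_+;L^2)}$ prefactor after the same Beta-function computation. The gradient part of the norm is handled in parallel, splitting $\nabla e^{-(t-\tau)\cL}=\nabla e^{-\frac{t-\tau}{2}\cL}\,e^{-\frac{t-\tau}{2}\cL}$ and invoking Lemma \ref{l:gradestimate} with $s=1$ to produce the $(t-\tau)^{-1/2}$ factor; the resulting Beta integrals $\mathrm{Beta}(1/2,1/2)$ and $\mathrm{Beta}(1/2,3/4)$ are finite, and one again extracts a power of $T$ and a supremum over $t$.

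Collecting the $L^2$ and gradient contributions and taking the maximum over the two norm components yields precisely \eqref{e:diff}, with the quadratic terms contributing $T^{1/4}\big(\norm{\phi_1}_{\tX_T}+\norm{\phi_2}_{\tX_T}\big)\norm{\phi_1-\phi_2}_{\tX_T}$ and the advection term contributing $T^{1/2}\norm{\bv}_{L^\infty(\R_+;L^2)}\norm{\phi_1-\phi_2}_{\tX_T}$, which factors as the stated $C\,T^{1/4}\big(\cdots+T^{1/4}\norm{\bv}_{L^\infty(\R_+;L^2)}\big)\norm{\phi_1-\phi_2}_{\tX_T}$. I do not anticipate a genuine obstacle here, since all the hard analytic work — the convergence of the Beta integrals and the semigroup smoothing bounds — is already in place from Lemma \ref{l:close}; the only point requiring mild care is the bookkeeping in the bilinear polarization, ensuring that each of the two cross terms $B(\phi_1-\phi_2,\phi_1)$ and $B(\phi_2,\phi_1-\phi_2)$ produces the correct combination $\norm{\phi_1}_{\tX_T}+\norm{\phi_2}_{\tX_T}$ multiplying $\norm{\phi_1-\phi_2}_{\tX_T}$, and that the powers of $T$ are uniformly at least $T^{1/4}$ so the common factor can be pulled out cleanly.
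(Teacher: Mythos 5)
Your proposal is correct and follows essentially the same route as the paper: the paper likewise cancels the semigroup term, polarizes the bilinear term into $(\nabla\phi_1-\nabla\phi_2)\cdot\nabla\phi_1$ and $(\nabla\phi_1-\nabla\phi_2)\cdot\nabla\phi_2$ plus the linear advection difference, and runs the same Lemma \ref{l:l2l1}/Lemma \ref{l:gradestimate} semigroup bounds with the rescaling $\bar\tau=\tau/t$ for both the $L^2$ and gradient components of the $\tX_T$-norm. The Beta-function exponents and the resulting powers of $T$ you record match the paper's computation.
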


\begin{proof}
The proof is similar to that of Lemma \ref{l:close}, and follows again from the following two claims.
Let  $\phi_i \in \tX_T$, $i=1,2$. For all  $0\leq t\leq T$:
\begin{itemize}[]
\item[\textit{Claim 1.}]
\begin{multline}
\norm{\mathcal{N}(\phi_1)(t)-\mathcal{N}(\phi_2)(t)}_{L^2}\leq \\
 C\,t^{1/4}\,(\norm{\phi_1}_{\tX_T}+\norm{\phi_2}_{\tX_T}+t^{1/4}\norm{\bv}_{L^\infty(\R_+;L^2)})\, \norm{\phi_1-\phi_2}_{\tX_T}\,;
\end{multline}
\item[\textit{Claim 2.}]
\begin{multline}
 t^{1/4}\, \norm{\nabla\mathcal{N}(\phi_1)(t)-\nabla \mathcal{N}(\phi_2)(t)}_{L^2}\leq  \\
 C\,t^{1/4}\,(\norm{\phi_1}_{\tX_T}+\norm{\phi_2}_{\tX_T}+t^{1/4}\norm{\bv}_{L^\infty(\R_+;L^2)})\, \norm{\phi_1-\phi_2}_{\tX_T}.
\end{multline}
\end{itemize}
Since $B(\cdot, \cdot)$ is bilinear and using the properties of the semigroup $e^{-t\cL}$, we have:
\[
\begin{aligned}
\norm{\mathcal{N}&(\phi_1)(t)-\mathcal{N}(\phi_2)(t)}_{L^2} \leq 
 C\, \int_0^t (t-\tau)^{-1/4}\, \left(\norm{(\nabla \phi_1(\tau-\nabla\phi_2(\tau))\cdot\nabla \phi_1(\tau)}_{L^1} \right. \\
 &\left.+
\norm{(\nabla \phi_1(\tau)-\nabla \phi_2(\tau))\cdot\nabla \phi_2(\tau)}_{L^1} 
+\norm{\bv(\tau)\cdot(\nabla \phi_1(\tau)-\nabla \phi_2(\tau))}_{L^1}\right)\,d\tau\\
&\leq C \, \left(\int_0^t (t-\tau)^{-1/4}\, \tau^{-1/2} \, d\tau \right)\, \left[\norm{\phi_1}_{\tX_T} +\norm{\phi_2}_{\tX_T} \right]\, \norm{\phi_1-\phi_2}_{\tX_T} \\
&\quad \qquad+C\,\left(\int_0^t (t-\tau)^{-1/4}\, \tau^{-1/4}\,d\tau\right)\, \norm{\bv}_{L^\infty(\R_+;L^2)} \,
\norm{\phi_1- \phi_2}_{\tX_T}\\
&\leq C\, t^{1/4} \,\big(\norm{\phi_1}_{\tX_T}+\norm{\phi_2}_{\tX_T}+t^{1/4}\norm{\bv}_{L^\infty(\R_+;L^2)}\big)\, \norm{\phi_1-\phi_2}_{\tX_T}\,
\end{aligned}
\]
where we used the change of variable $\Bar\tau=\tau/t$. \textit{Claim 1}  now follows immediately.

Similarly:
\[
\begin{aligned}
\norm{\nabla\mathcal{N}&(\phi_1)(t)-\nabla \mathcal{N}(\phi_2)(t)}_{L^2} \\&\leq 
 C \int_0^t (t-\tau)^{-1/2}\, \left(\norm{(\nabla \phi_1(\tau-\nabla\phi_2(\tau))\cdot\nabla \phi_1(\tau)}_{L^1} \right. \\
 &\left.+
\norm{(\nabla \phi_1(\tau)-\nabla \phi_2(\tau))\cdot\nabla \phi_2(\tau)}_{L^1} 
+\norm{\bv(\tau)\cdot(\nabla \phi_1(\tau)-\nabla \phi_2(\tau))}_{L^1} \right)\,d\tau\\
&\leq C \, \left(\int_0^t (t-\tau)^{-1/2}\, \tau^{-1/2} \, d\tau \right)\, \norm{\phi_1-\phi_2}_{\tX_T} \left[\norm{\phi_1}_{\tX_T} +\norm{\phi_2}_{\tX_T} \right]\\
&\qquad+C\,\left(\int_0^t (t-\tau)^{-1/2}\, \tau^{-1/4}\,d\tau\right)\, \norm{\bv}_{L^\infty(\R_+;L^2)} \,
\norm{\phi_1- \phi_2}_{\tX_T}\\
&\leq C \,\big(\norm{\phi_1}_{\tX_T}+\norm{\phi_2}_{\tX_T}+t^{1/4}\norm{\bv}_{L^\infty(\R_+;L^2)}\big)\, \norm{\phi_1-\phi_2}_{\tX_T},
\end{aligned}
\] 
where the last inequality follows by making again  the change of variable $\Bar\tau=\tau/t$. {\em Claim~2} now follows,
We conclude the proof by taking the supremum  over $0<t\leq T$ in both claims.
\end{proof}

Now we are ready to use  the Banach Fixed Point Theorem to prove the local existence of a mild solution.


\begin{proof}[Proof of Theorem \ref{t:localAKS}]
We denote by $B(0,M)$ the closed ball in $\tX_T$ with center the origin. We let  $M=2C\norm{\phi_0}_{L^2}$, where $C$ is the maximum of the constants appearing in \eqref{e:Fgrowth} and~\eqref{e:diff}, and assume that 
\begin{align} \label{eq:Tchoice}
T\leq \min\Big(1, ~\frac{1}{16(CM+C\norm{v}_{L^\infty(\R_+;L^2)})^4}\Big).
\end{align}
Then, Lemma \ref{l:close} implies that
\begin{align}
\norm{\mathcal{N}(\phi)}_{\tX_T}\leq M, \quad \forall \phi \in B(0, M)\,.
\end{align}
We can further check that with, such such choice of $T$, Lemma \ref{l:diff} gives
\begin{align}
\norm{\mathcal{N}(\phi) -\mathcal{N}(\psi)}_{\tX_T}\leq q \norm{\phi-\psi}_{\tX_T}\,,
\end{align}
where 
\begin{align}
q=\frac{2M+\norm{\bv}_{L^\infty(\R_+;L^2)}}{2M+2\norm{\bv}_{L^\infty(\R_+;L^2)}} <1\,.
\end{align}
By the Banach Contraction Mapping Theorem, there is a unique fixed point of $\mathcal{N}$ in $B(0, M)$. By Definition \ref{d:solutionDef}, $\phi$ is a mild solution of \eqref{e:aks} with initial data $\phi(0)=\phi_0$, which is unique in $\tX_T$.
Indeed,  if there is another mild solution $\tilde \phi$ in $B(0, \tilde M)$ with $\tilde M>M$, then $\tilde \phi=\phi$ in  $B(0,M)\subset B(0,\tilde M)$.
\end{proof}

\begin{corollary} \label{c:contPrinciple}
 Under the hypothesis of Theorem \ref{t:localAKS}, if $T^\ast$ is the maximal time of existence of the mild solution  $\psi$, then
 \[
     \limsup_{t\to T_-^\ast} \|\phi(t)\|_{L^2(\TT^2))} = \infty.
 \]
Otherwise, $T^\ast =\infty$.
\end{corollary}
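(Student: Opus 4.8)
The plan is to argue by contradiction, using Theorem~\ref{t:localAKS} itself as a restart mechanism. The essential observation is that the existence time furnished by that theorem depends on the data only through $\|\phi_0\|_{L^2}$ and the fixed quantities $L_1,L_2$, $\sup_{t>0}\|\bv\|_{L^2}$: indeed, in \eqref{eq:Tchoice} one has $M=2C\|\phi_0\|_{L^2}$, so $T$ is bounded below by a positive number that decreases monotonically in $\|\phi_0\|_{L^2}$. This uniformity is what makes continuation possible.

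Suppose, then, that $T^\ast<\infty$ but $\limsup_{t\to T^{\ast}_-}\|\phi(t)\|_{L^2}<\infty$; then there is $B<\infty$ with $\|\phi(t)\|_{L^2}\le B$ for all $t\in[0,T^\ast)$. I would fix $t_0\in(0,T^\ast)$ and apply Theorem~\ref{t:localAKS} with initial datum $\phi(t_0)\in L^2(\TT^2)$ and advecting field $\bv(t_0+\cdot)$. Since $\|\phi(t_0)\|_{L^2}\le B$ and $\sup_{t>0}\|\bv(t_0+\cdot)\|_{L^2}\le\sup_{t>0}\|\bv\|_{L^2}$, the resulting existence time $\delta>0$ can be taken \emph{independent of} $t_0$. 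This yields a mild solution $\psi$ on $[t_0,t_0+\delta]$, unique in the corresponding weighted space, with $\psi(t_0)=\phi(t_0)$.

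I would then glue the two solutions: set $\tilde\phi:=\phi$ on $[0,t_0]$ and $\tilde\phi:=\psi$ on $[t_0,t_0+\delta]$, and verify that $\tilde\phi$ is a mild solution of \eqref{e:aks} on $[0,t_0+\delta]$ in the sense of \eqref{eq:MildDef}. This is where the semigroup property enters: applying $e^{-(t-t_0)\cL}$ to the Duhamel identity for $\phi(t_0)$ and using $e^{-(t-t_0)\cL}e^{-(t_0-\tau)\cL}=e^{-(t-\tau)\cL}$ converts the representation of $\psi$, whose Duhamel integrals start at $t_0$, into one with integrals starting at $0$; since $\tilde\phi=\phi$ on $[0,t_0]$, the contributions over $[0,t_0]$ match. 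Continuity of $\tilde\phi$ in $L^2$ across $t_0$ holds because $\psi(t_0)=\phi(t_0)$, and local integrability of $\nabla\tilde\phi$ is preserved because the weighted bound on $s^{1/4}\|\nabla\psi(t_0+s)\|_{L^2}$ allows at worst an integrable $s^{-1/4}$ singularity at $s=0$. Choosing $t_0=T^\ast-\delta/2$ gives $t_0+\delta>T^\ast$, so $\tilde\phi$ is a mild solution on an interval strictly larger than $[0,T^\ast)$, contradicting the maximality of $T^\ast$. Hence either $\limsup_{t\to T^{\ast}_-}\|\phi(t)\|_{L^2}=\infty$ or $T^\ast=\infty$.

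The main obstacle is not any single estimate but the bookkeeping of the gluing: one must confirm that the concatenated function genuinely satisfies the \emph{single} integral equation \eqref{eq:MildDef} posed from time $0$, rather than two separate equations, and that it lies in the solution class with the correct behavior of $\nabla\tilde\phi$ near the restart time. A cleaner alternative that sidesteps explicit gluing is to invoke uniqueness directly: the restarted solution $\psi$ must coincide with $\phi$ on the overlap $[t_0,T^\ast)$, so $\psi$ itself extends $\phi$ beyond $T^\ast$, again contradicting maximality.
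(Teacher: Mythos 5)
Your argument is correct and follows essentially the same route as the paper: restart the solution near $T^\ast$ using the fact that the existence time from Theorem~\ref{t:localAKS} depends only on $\|\phi_0\|_{L^2}$ (and the fixed data $L_1,L_2,\sup_t\|\bv\|_{L^2}$), then use uniqueness in the weighted class on the overlap to contradict maximality. The paper skips the explicit gluing and goes directly to the uniqueness-on-overlap argument you mention as your ``cleaner alternative,'' noting only, as you do, that $\sup_{\bar t<t<T'}(t-\bar t)^{1/4}\|\nabla\phi(t)\|_{L^2}<\infty$ so the restriction of $\phi$ lies in the right space.
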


\begin{proof}
 If $T^*<\infty$ and $\limsup_{t\to T_-^\ast} \|\phi(t)\|_{L^2(\TT^2))}=\gamma<\infty$, then there exists $t_0<T^\ast$ such that for all $t_0<\Bar{t}<T^\ast$, $\|\phi(\Bar{t})\|_{L^2(\TT^2))}\leq 3\gamma/2$. Let $T$ satisfy 
 \[
    T\leq \min\Big(1, ~\frac{1}{16(3C^2\gamma+C\norm{\bv}_{L^\infty(\R_+; L^2)})^4}\Big),
\]
where $C$ is as in \eqref{eq:Tchoice}.  Choose $\Bar{t}$ such that $T>T^\ast-\Bar{t}$.
Then, by Theorem \ref{t:localAKS}, there exists a mild solution $\tilde{\phi}$ on $[\Bar{t}, \Bar{t} +T]$ with initial data $\phi(\Bar{t})$ and $\phi=\tilde{\phi}$ on $[\Bar{t},T']$ for all $\Bar{t}<T'<T^\ast$, by uniqueness of mild solutions (since, if $\phi$ is a mild solution on $[0,T']$ for all $T'<T^\ast$,  then $\sup_{\Bar{t}<t<T'} (t-\Bar{t})^{1/4}\,\|\nabla \phi(t)\|_{L^2}<\infty$). Hence, the solution can be continued past $T^\ast$, which gives a contradiction. 
\end{proof}

We close by showing that the local-in-time mild solution we constructed is actually also a weak solution on $[0,T]$. 

\begin{proposition}\label{p:mildEqualweak}
 Let $\phi$ be the mild solution on $[0,T]$ given in Theorem \ref{t:localAKS}. Then, $\phi$ is a weak solution of \eqref{e:aks} on $[0,T]$,  and satisfies the energy identity for any $0\leq t< T$:
\begin{multline} \label{eq:energyIdentity}
    \|\phi(t)\|^2_{L^2} + 2\int^{t}_{0} \|\Delta \phi(s)\|_{L^2}^2\, ds =  \|\phi_0\|^2_{L^2} \\ + 
   2 \int^{t}_{0} \|\nabla \phi(s)\|_{L^2}^2\,ds -\int^{t}_{0} \int_{\TT^2} \phi(s)\, |\nabla \phi(s)|^2\, dx\, ds.
\end{multline} 
\end{proposition}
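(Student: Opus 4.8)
The plan is to upgrade the mild solution to a weak solution by first bootstrapping its spatial regularity to $L^2([0,T];H^2)$, then identifying $\partial_t\phi$ as an element of $L^2([0,T];H^{-2})$, and finally reading off both the weak formulation \eqref{eq:weakDef} and the energy identity \eqref{eq:energyIdentity} from a Lions--Magenes duality argument applied to the Gelfand triple $H^2\subset L^2\subset H^{-2}$.

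First I would show $\phi\in L^2([0,T];H^2(\TT^2))$ by estimating the three terms of the Duhamel formula \eqref{eq:MildDef} separately in $L^2_t\dot H^2_x$. For the linear term $e^{-t\cL}\phi_0$, the pointwise bound of Lemma \ref{l:gradestimate} with $s=2$ (giving $\|\Delta e^{-t\cL}\phi_0\|_{L^2}\le C\,t^{-1/2}\|\phi_0\|_{L^2}$) is \emph{not} square-integrable in time, so I would instead argue directly via Plancherel: for $\tbk\notin S$ one has $\sigma(\tbk)\ge\kappa^4/2$, hence $\int_0^T e^{-2t\sigma(\tbk)}\,dt\le C\kappa^{-4}$, while the finitely many $\tbk\in S$ contribute a bounded amount, so that $\int_0^T\|\Delta e^{-t\cL}\phi_0\|_{L^2}^2\,dt\le C\|\phi_0\|_{L^2}^2$. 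For the bilinear term $B(\phi,\phi)$ and the advection term $L(\phi)$ I would use the $L^1\to L^2$ smoothing bound \eqref{eq:SemigroupBound} with $s=2$, namely $\|\Delta e^{-\tau\cL}g\|_{L^2}\le C\,\tau^{-3/4}\|g\|_{L^1}$, together with the $\tX_T$ control $\||\nabla\phi|^2\|_{L^1}=\|\nabla\phi\|_{L^2}^2\le C\,\tau^{-1/2}$ and $\|\bv\cdot\nabla\phi\|_{L^1}\le\|\bv\|_{L^2}\|\nabla\phi\|_{L^2}\le C\,\tau^{-1/4}$. The resulting Beta-function integrals give $\|\Delta B(\phi,\phi)(t)\|_{L^2}\le C\,t^{-1/4}$ and $\|\Delta L(\phi)(t)\|_{L^2}\le C$, both square-integrable on $(0,T)$.

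Next I would identify the time derivative. Interpolating $\phi\in L^\infty_tL^2_x\cap L^2_tH^2_x$ gives $\phi\in L^4_tH^1_x$, whence $|\nabla\phi|^2\in L^2_tL^1_x\hookrightarrow L^2_tH^{-2}_x$ (using $H^2\hookrightarrow L^\infty$ in two dimensions, so $L^1\hookrightarrow H^{-2}$); likewise $\bv\cdot\nabla\phi\in L^2_tH^{-2}_x$, since $\|\bv\cdot\nabla\phi\|_{L^1}\le\|\bv\|_{L^\infty_tL^2}\|\nabla\phi\|_{L^2}$ and $\int_0^T\|\nabla\phi\|_{L^2}^2\,dt\le C\int_0^T t^{-1/2}\,dt<\infty$. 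Since $\Delta^2\phi+\Delta\phi\in L^2_tH^{-2}_x$ as well, differentiating \eqref{eq:MildDef} shows that $\phi$ solves $\partial_t\phi=-\Delta^2\phi-\Delta\phi-\tfrac12|\nabla\phi|^2-\bv\cdot\nabla\phi$ with $\partial_t\phi\in L^2([0,T];H^{-2})$. The weak formulation \eqref{eq:weakDef} then follows by pairing this identity against $\varphi\in C^\infty_c([0,T)\times\TT^2)$ and integrating by parts in $t$ (legitimate because $\partial_t\phi\in L^2H^{-2}$ and $\phi\in L^2H^2$, with $\varphi(T)=0$) and in $x$, the time boundary term producing $\int_{\TT^2}\phi_0\varphi(0)$; a sign check confirms every term matches \eqref{eq:weakDef}.

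Finally, for the energy identity I would invoke the Lions--Magenes lemma: from $\phi\in L^2H^2$ and $\partial_t\phi\in L^2H^{-2}$ one obtains $\phi\in C([0,T];L^2)$ and $\tfrac{d}{dt}\|\phi\|_{L^2}^2=2\langle\partial_t\phi,\phi\rangle_{H^{-2},H^2}$ for a.e.\ $t$. Substituting the equation and using the pairings $\langle\Delta^2\phi,\phi\rangle=\|\Delta\phi\|_{L^2}^2$, $\langle\Delta\phi,\phi\rangle=-\|\nabla\phi\|_{L^2}^2$, $\langle|\nabla\phi|^2,\phi\rangle=\int_{\TT^2}\phi|\nabla\phi|^2$, and $\langle\bv\cdot\nabla\phi,\phi\rangle=\tfrac12\int_{\TT^2}\bv\cdot\nabla(\phi^2)=0$ (the last by $\mathrm{div}\,\bv=0$ tested against $\phi^2/2\in H^1$, valid since $\phi\in H^2\hookrightarrow L^\infty$ for a.e.\ $t$), then integrating from $0$ to $t$ with $\|\phi(0)\|_{L^2}=\|\phi_0\|_{L^2}$, yields \eqref{eq:energyIdentity}. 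All the integrals involved converge down to $t=0$: besides $\int_0^t\|\Delta\phi\|_{L^2}^2$ and $\int_0^t\|\nabla\phi\|_{L^2}^2$, the cubic term obeys $\int_{\TT^2}\phi|\nabla\phi|^2\le\|\phi\|_{L^2}\|\nabla\phi\|_{L^4}^2\le C\|\phi\|_{L^2}\|\nabla\phi\|_{L^2}\|\phi\|_{H^2}\le C\,s^{-3/4}$ by the two-dimensional Ladyzhenskaya inequality and the pointwise bounds above, so $\int_0^t\int_{\TT^2}\phi|\nabla\phi|^2\le C\,t^{1/4}<\infty$. I expect the main obstacle to be this first step: the borderline time-integrability at $t=0$, which rules out the naive pointwise smoothing estimate for the linear evolution and forces the sharp $L^2$-in-time spectral bound, together with the verification that the singular nonlinearity $|\nabla\phi|^2$ — controlled only with the $t^{1/4}$ weight built into $\tX_T$ — nonetheless lands in $L^2([0,T];H^{-2})$.
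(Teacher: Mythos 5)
Your argument is essentially correct, but it reaches the conclusion by a genuinely different route than the paper, and the comparison is instructive. The paper never proves $\phi\in L^2((0,T);H^2)$ directly near $t=0$: it re-centers the Duhamel formula at a time $\epsilon>0$ (where $\phi(\epsilon)\in H^1$ thanks to the $\tX_T$ weight), obtains $\|\Delta\phi(t)\|_{L^2}\lesssim (t-\epsilon)^{-1/4}$ on $[\epsilon,T]$, derives the weak formulation and energy identity on $[\epsilon,t]$ by an explicit duality computation with $e^{-(t-\tau)\cL}\varphi(t)$ followed by an approximation $\varphi^n\to\phi$, and only then lets $\epsilon\to 0$, using Gagliardo--Nirenberg to absorb the cubic term and the Monotone Convergence Theorem to conclude $\phi\in L^2((0,T);H^2)$ \emph{a posteriori}. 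Your observation that $\int_0^T\|\Delta e^{-t\cL}\phi_0\|_{L^2}^2\,dt\lesssim\|\phi_0\|_{L^2}^2$ by integrating the Plancherel expression in time (a maximal-regularity bound that the pointwise estimate of Lemma \ref{l:gradestimate} misses) lets you get the $L^2_tH^2_x$ regularity on all of $[0,T]$ in one step and dispense with the $\epsilon$-truncation and the limiting argument entirely; your use of the Lions--Magenes lemma then replaces the paper's hand-built approximating sequence $\varphi^n$, and your endpoint checks (the $s^{-3/4}$ bound on the cubic term, the vanishing of the advection term) are all sound. The one place where your proposal is thin is the assertion that ``differentiating \eqref{eq:MildDef}'' yields $\partial_t\phi=-\cL\phi-\tfrac12|\nabla\phi|^2-\bv\cdot\nabla\phi$ in $L^2([0,T];H^{-2})$: the Duhamel formula cannot literally be differentiated, and passing from the mild to the differential form is precisely the content of the paper's duality argument (pairing against $e^{-(t-\tau)\cL}\varphi(t)$, self-adjointness of the semigroup, the Leibniz rule for the pairing, and Fubini). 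Given the regularity you have established this step is classical, but it is the technical heart of the paper's proof and should be carried out, or at least cited, rather than asserted.
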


\begin{proof}
We first show that the mild solution $\phi(t)$ can be represented as follows for $0<\epsilon \leq t \leq T$:
\begin{equation}\label{eq:Mildt0}
     \phi(t)= e^{-(t-\epsilon) \cL}\, \phi(\epsilon) -\int_{\epsilon}^t e^{-(t-\tau)\, \cL} \, \left(\frac{1}{2} |\nabla \phi(\tau)|^2+\bv(\tau) \cdot  \nabla\phi(\tau )
      \right)\,d\tau,
\end{equation}
with equality as functions in $C([\epsilon,T];L^2(\TT^2))$.
Indeed, by the semigroup property, from \eqref{eq:MildDef} it follows that:
\[
   \begin{aligned}
      \phi(t)-&\phi(\epsilon)  = (e^{-(t-\epsilon) \cL}-I) \left[e^{-\epsilon \cL}\, \phi_0 -\int^{\epsilon}_0 e^{-(\epsilon-\tau)\, \cL}  \left(
      \frac{1}{2} |\nabla \phi|^2+\bv \cdot \nabla\phi\right)(\tau) \, d\tau\right] \\
      &\qquad \qquad \qquad - \int^{t}_{\epsilon} e^{-(t-\tau)\, \cL} \, \left(\frac{1}{2}|\nabla \phi(\tau)|^2+\bv(\tau) \cdot \nabla\phi(\tau )\right)\, d\tau\\
      &=  (e^{-(t-\epsilon) \cL}-I) \,\phi(\epsilon)  - \int^{t}_{\epsilon} e^{-(t-\tau)\, \cL} \, \left(\frac{1}{2}|\nabla \phi(\tau)|^2+\bv(\tau) \cdot \nabla\phi(\tau)\right)\,d\tau,
   \end{aligned}
\]
where $I$ is the identity map.
Next, we show that $\phi\in L^2([\epsilon,T];H^2(\TT^2))$.   Since $\phi\in C([0,T];L^2(\TT^2))$, it is enough to show that $ \Delta \phi \in L^2([\epsilon,T]\times\TT^2)$.
We apply $\Delta$ to \eqref{eq:Mildt0}, and use \eqref{eq:SemigroupBound} with $s=2$, using also that $\phi \in \tX_T$ on $[\epsilon,T]$:
 \[
   \begin{aligned}
     &\|\Delta \phi(t)\|_{L^2}\leq C\,\left[ (t-\epsilon)^{-1/4} \, \|\phi(\epsilon)\|_{H^1} +
      \left(\int_{\epsilon}^t (t-\tau)^{-3/4} \,\tau^{-1/2}\, d\tau \right) \|\phi\|_{\tX_T}^2  \right .\\
      &\left. \qquad \qquad \qquad \qquad +\left(\int_{\epsilon}^t (t-\tau)^{-3/4}  \,\tau^{-1/4}\, d\tau\right) \|\bv\|_{L^\infty(\R_+;L^2)}\, \|\phi\|_{\tX_T} 
      \right]\\
      &    \leq C\, (t-\epsilon)^{-1/4} \left[\big(\epsilon^{1/4}+( t-\epsilon)^{1/4} \, \|\bv\|_{L^\infty(\R_+;L^2)}\big) \,\|\phi\|_{\tX_T} + \|\phi\|_{\tX_T}^2
        \right],\\
   \end{aligned}
 \]
 which gives the desired estimate. 
Given that $\Delta \phi$ is square integrable on $[\epsilon,T]\times \T^2$, $\frac{1}{2}|\nabla \phi(t)|^2 +\bv(t)\cdot \nabla\phi(t) \in L^1([\epsilon,T]\times \T^2)$. 
Let $\varphi \in C^\infty([\epsilon,T]\times\T^2)$ and consider the $L^2$ pairing of $\varphi$ with $\phi$. By Plancherel,
$e^{-t \cL}$ is a self-adjoint operator on $L^2$, so that
 \begin{multline} \label{eq:dualityFormRep}
      \int_{\TT^2} \varphi(t) \,\phi(t)\, dx = \int_{\TT^2} e^{-(t-\epsilon) \cL} \varphi(t)\phi(\epsilon)\, dx \\-
      \int_{\epsilon}^t \int_{\TT^2}  e^{-(t-\tau)\cL} \varphi(t)\, \left(\frac{1}{2}|\nabla \phi(\tau)|^2+\bv(\tau) \cdot \nabla\phi(\tau )\right)\, dx\, d
      \tau,
 \end{multline}
where we have used Fubini-Tonelli's Theorem to exchange the order of integration. Now, $\varphi(t)$ is in the domain of $\cL$ for all $t\in [\epsilon,T]$ by hypothesis and $\cL$ generates an analytic semigroup, hence differentiable.
Therefore, strongly in $L^2(\TT^2)$,
\[
    \lim_{h\to 0}  \frac{e^{-(t+h-\epsilon) \cL} \varphi(t)-  e^{-(t-\epsilon) \cL} \varphi(t)}{h} =
    - e^{-(t-\epsilon) \cL} \cL \varphi(t),
\]
given that $\cL \varphi(t)\in L^2(\TT^2)$ and that that $\ds \frac{d}{dt} e^{-t \cL} f = -\cL \,e^{-t \cL} f= - e^{-t \cL}\, \cL f$ for all $t\geq 0$, if $f$ is in the domain of $\cL$.
Similarly, strongly in $L^2(\TT^2)$,
\[
    \lim_{h\to 0}  \frac{e^{-(t-\epsilon) \cL} \varphi(t+h)-  e^{-(t-\epsilon) \cL} \varphi(t)}{h} =
    e^{-(t-\epsilon) \cL} \pa_t\varphi(t),
\]
by the smoothness of $\varphi$. Since $e^{-t\,\cL}$ is a strongly continuous semigroup on $L^2(\TT^2)$ it follows that
$\frac{d}{dt} e^{-(t-\epsilon) \cL} \varphi(t)\in C([\ep,T];L^2(\TT^2))$ and the Leibniz rule applies:
\begin{multline*}
   \frac{d}{dt} e^{-(t-\epsilon) \cL} \varphi(t)= (\frac{d}{dt} e^{-(t-\epsilon) \cL}) \varphi(t) \\+  e^{-(t-\epsilon) \cL} \pa_t\varphi(t) = e^{-(t-\epsilon) \cL} \pa_t \varphi(t) - \cL\,e^{-(t-\epsilon) \cL}  \varphi(t).
\end{multline*}
In particular, the pairing of $e^{-(t-\epsilon) \cL} \varphi(t)$ with any function $g(t)$ that  is absolutely continuous in $t\in [\ep,T]$ with values in $L^2(\TT^2)$, is differentiable a.e. in $t$, the derivative is integrable in time, and
\begin{multline} \label{eq:weakDerivativePairing}
  \frac{d}{dt} \int_{\TT^2} e^{-(t-\epsilon) \cL} \varphi(t) \,g(t)\, dx = \int_{\TT^2}
  \left(e^{-(t-\epsilon) \cL} \pa_t \varphi(t) g(t) - \cL\,e^{-(t-\epsilon) \cL}  \varphi(t)\, g(t) \right.\\
   \left. + e^{-(t-\epsilon) \cL} \varphi(t)\,g'(t)\right)\,dx.
\end{multline}
We are using here the property that, if a Banach space $X$ is separable and reflexive ($X=L^2(\TT^2)$ here), then absolute continuity of $X$-valued functions of time is equivalent to the existence of a weak derivative, which belongs to  $X$, for  a.e. times, and  which is integrable in the B\"ochner sense.
From \eqref{eq:dualityFormRep}, by \eqref{eq:weakDerivativePairing} with $g(t)= \phi(\ep)$ and
$g(t)= \int_{\epsilon}^t   e^{-(t-\tau)\cL} \varphi(t) \left(\frac{1}{2}|\nabla \phi(\tau)|^2+\bv(\tau) \cdot \nabla\phi(\tau )\right)d\tau$, using again Fubini-Tonelli, it follows that
\[
  \begin{aligned}
   & \frac{d}{dt} \int_{\TT^2} \varphi(t) \,\phi(t)\, dx =  \int_{\TT^2}   \left(e^{-(t-\epsilon) \cL} \pa_t \varphi(t) - \cL\,e^{-(t-\epsilon) \cL}  \varphi(t)\right) \,\phi(\epsilon)\, dx \\
    &- \int_{\TT^2}   \int_{\epsilon}^t \left(e^{-(t-\tau) \cL} \pa_t \varphi(t) - \cL\,e^{-(t-\tau) \cL} \varphi(t)\right) \, \left(\frac{1}{2}
    |\nabla \phi(\tau)|^2+\bv(\tau) \cdot\nabla \phi(\tau )\right)\,d\tau\, dx\\
    &  - \int_{\TT^2}  \varphi(t)\, \left(\frac{1}{2}|\nabla \phi(t)|^2+\bv(t) \cdot \nabla
    \phi(t)\right)\, dx.
  \end{aligned}
\]
Consequently,
\begin{align} \label{e:timedir}
\nonumber
    \frac{d}{dt}& \int_{\TT^2} \varphi(t) \,\phi(t)\, dx = \nonumber\\
    &=\int_{\TT^2} \pa_t \varphi(t) \left[ e^{-(t-\epsilon) \cL}\,\phi(\epsilon)
    -\int_{\epsilon}^t e^{-(t-\tau) \cL} \left(\frac{1}{2}|\nabla 
    \phi|^2+\bv \cdot \nabla\phi\right)(\tau) \, d\tau \right] dx \nonumber\\
    \nonumber
    &- \int_{\TT^2} \cL \,\varphi(t) \, \left[ e^{-(t-\epsilon) \cL}\,\phi(\epsilon) 
    -\int_{\epsilon}^t e^{-(t-\tau) \cL} \left(\frac{1}{2}|\nabla 
    \phi|^2+\bv \cdot\nabla \phi\right)(\tau) \, d\tau\right] \, dx \nonumber \\
    &\qquad \qquad \qquad \qquad  -\int_{\TT^2}  \varphi(t)\, \left(\frac{1}{2}|\nabla \phi(t)|^2+\bv(t) \cdot\nabla
    \phi(t)\right)\, dx \\
    & = \int_{\TT^2} (\pa_t \varphi(t) -\cL\, \varphi(t))\,\phi(t)\, dx - \int_{\TT^2}  \varphi(t)\, \left(\frac{1}{2}|\nabla \phi(t)|^2+\bv(t)   
     \cdot  \nabla\phi(t)\right)\, dx. \nonumber
\end{align}   
We integrate the above expression over $(\epsilon, t)$ for any $\epsilon < t < T$, and then integrate by parts over
$\TT^2$ twice:
\begin{align} \label{eq:differentiatedWeakForm}
 \nonumber
 \int_{\TT^2} \varphi(t)\,\phi(t) \, dx &  - \int_{\TT^2} \varphi(\epsilon)\,\phi(\epsilon) \, dx = \int^{t}_{\epsilon} \int_{\TT^2} \pa_t \varphi(\tau) \, \phi(\tau) \,dx\,d\tau \\
 \nonumber
  &  - \int^{t}_{\epsilon} \int_{\TT^2} \Delta \varphi(\tau)\,\Delta\phi(\tau)\, dx\,d\tau
    + \int^{t}_{\epsilon} \int_{\TT^2} \nabla \varphi(\tau)\cdot\nabla\phi(\tau)\, dx\,d\tau\\
    &- \int^{t}_{\epsilon} \int_{\TT^2}  \varphi(\tau)\, \left(\frac{1}{2}|\nabla 
    \phi(\tau)|^2+\bv(\tau) \cdot \nabla \phi(\tau)\right)\, dx\, d\tau,
 \end{align}
 which is justified, since $\phi\in L^2((\ep,T);H^2)\cap C([0,T];L^2)$.
From \eqref{eq:differentiatedWeakForm}, we conclude that  $\pa_t \phi \in L^2([\epsilon,T];H^{-2})$ by density.
In fact, all terms in \eqref{eq:differentiatedWeakForm} are well defined for $\varphi \in H^1((\ep,T);H^2)$. Here, by $H^1(I,X)$, where $I\subset \RR$ is an interval and $X$ is a Banach space, we mean the space of all functions $\varphi \in L^2(I;X)$ with weak derivative $\pa_t\varphi \in L^2(I;X)$.

Since $\phi \in L^2((\ep,T);H^2)\cap H^1((\ep,T);H^{-2})$, there exists a sequence
$\{\varphi^n\}\subset C^1([\ep,T];H^2)$
such that $\varphi^n\to \phi$ strongly in $C([\epsilon,T];L^2)\cap L^2((\epsilon,T);H^2)$ as $n\to \infty$ and such that $\pa_t\varphi_n \to \pa_t \phi$ weakly in $L^2((\ep,T);H^{-2})$ (this can be shown by suitably extending $\phi$ in $t$ to $\RR$ and mollifying in time).
By  the Sobolev Embedding Theorem, $\varphi^n\to \phi$ strongly in $L^2((\epsilon, T);L^4)$. By H\"older's inequality then,
\begin{multline*}
     \hspace*{-.15in}\left| \int_{\epsilon}^t \int_{\TT^2}  (\varphi^n-\phi) \left(\frac{1}{2}|\nabla \phi|^2 + \bv\cdot \nabla \phi \right)  dx\, d\tau \right|
    \leq \|\varphi^n-\phi\|_{L^\infty((\epsilon,T),L^2)} \,\|\nabla\phi\|_{L^2((\epsilon, T);L^4)}^2  \\ 
    +   \|\varphi^n-\phi\|_{L^2((\epsilon,T),L^4)} \, \|\bv\|_{L^\infty(\R_+;L^2)} \, \|\nabla \phi\|_{L^2((\epsilon, T);L^4)},
\end{multline*}
We take $\varphi^n$ as test function in \eqref{eq:differentiatedWeakForm}. We can then pass to  the limit $n\to \infty$ in every term of the resulting expression. In particular:
\begin{align}\label{eq:differentiatedWeakForm2}
&\|\phi(t)\|_{L^2}^2 -\|\phi(\epsilon)\|^2_{L^2}=\int^{t}_{\epsilon} \int_{\TT^2} \phi(\tau) \,\pa_t  \phi(\tau) \,dx\,d\tau  
    - \int^{t}_{\epsilon} \norm{\Delta\phi(\tau)}_{L^2}^2\,d\tau\\
    &+ \int^{t}_{\epsilon}  \norm{\nabla\phi(\tau)}_{L^2}^2\,d\tau
    - \int^{t}_{\epsilon} \int_{\TT^2}  \phi(\tau)\, \left(\frac{1}{2}|\nabla
    \phi(\tau)|^2+\bv(\tau) \cdot \nabla \phi(\tau)\right)\, dx\, d\tau. \nonumber
\end{align}
Since  $\phi\in L^2((\epsilon,T);H^2)$ with $\partial_t\phi \in L^2((\epsilon, T);H^{-2})$, the mapping $t\to \norm{\phi(t)}_{L^2(\T^2)}^2$ is absolutely continuous and the Fundamental Theorem of Calculus applies:
\[
\begin{aligned}
   \int_{\ep}^t \int_{\T^2} \phi(t) \partial_t \phi(t)\,dt&=\frac{1}{2}\int_\ep^t \frac{d}{dt}\norm{\phi}_{L^2(\T^2)}^2  \\
    &= \frac{1}{2} \left( \|\phi(t)\|_{L^2}^2 -\|\phi(\ep)\|^2_{L^2} \right), \qquad \epsilon < t<T.
\end{aligned}
\]
Therefore, we have from \eqref{eq:differentiatedWeakForm2}:
\begin{multline}\label{e:energytoepsilon}
  \|\phi(t)\|^2_{L^2}-\|\phi(\epsilon)\|^2_{L^2}+2\int_{\epsilon}^t
    \|\Delta \phi(\tau)\|^2_{L^2}\, d\tau =   
  2 \int_{\epsilon}^t  \|\nabla \phi(\tau)\|^2_{L^2}\,\, d\tau \\
   - \int_{\epsilon}^t\int_{\TT^2}  \phi\, |\nabla \phi|^2 dx\, d\tau\,.
\end{multline}
The term with $\bv$ vanishes identically, as $\bv$ is divergence free.

Next, we show that we can take $\epsilon \to 0$ in \eqref{e:energytoepsilon}, obtaining an energy inequality valid on $[0,T)$. First, by the  Gagliardo-Nirenberg and Young's  inequalities,
\begin{align*}
\abs[\Big]{ \int_{\epsilon}^t\int_{\TT^2}  \phi\, |\nabla \phi|^2 dx\, d\tau }&\leq C\int_{\epsilon}^t \norm{\phi(\tau)}_{L^2}^{3/2}\norm{\Delta \phi(\tau)}_{L^2}^{3/2}+ \norm{\phi}_{L^2}^3\,d\tau \\
 &\leq \int_\epsilon^t \norm{\Delta \phi(\tau)}_{L^2}^2\,d\tau +C\int_{\epsilon}^t  \norm{\phi(\tau)}_{L^2}^6 +\norm{\phi(\tau)}_{L^2}^3\,d\tau\,,
\end{align*}
where we used that  $\phi$ is a mild solution in $\tX_T$, so the left-hand side is well defined.
Utilizing this inequality in~\eqref{e:energytoepsilon},  we get
\begin{multline*}
   \int_\epsilon ^t\|\Delta \phi(\tau)\|^2_{L^2}\, d\tau \leq  \|\phi(\ep)\|^2_{L^2}-\|\phi(t)\|^2_{L^2}+
  2 \int_{\epsilon}^t  \|\nabla \phi(\tau)\|^2_{L^2}\,\, d\tau \\
   + C\int_{\epsilon}^t  \norm{\phi(\tau)}_{L^2}^6 +\norm{\phi(\tau)}_{L^2}^3\,d\tau\,.\nonumber
\end{multline*}
Since $\phi\in C([0,T], L^2)$,   $\norm{\phi(\epsilon)}_{L^2}\to \norm{\phi_0}_{L^2}$ as $\ep\to 0$, so that the right hand side in the expression above is bounded above uniformly in $\ep$. By the Monotone Convergence Theorem,
$\phi \in L^2((0,T); H^2)$ and  \eqref{eq:energyIdentity} is recovered by sending $\epsilon \to 0$ in~\eqref{e:energytoepsilon}. Similarly, since $\phi\in L^2((0,T);H^2)$, the right-hand side of   \eqref{eq:differentiatedWeakForm} with $\varphi \in L^2((0,T);H^2)$ is uniformly bounded in $\ep$, so the left-hand side is, which implies that
$(\pa_t \phi) \,\chi_{[\ep,T)}$ converges weakly to $\pa_t \phi$ in $L^2((0,T);H^{-2})$. Therefore,
by the Dominated Convergence Theorem, we can pass to the limit $\ep\to 0$ in \eqref{eq:differentiatedWeakForm}, taking the test function $\varphi\in C_c^\infty([0,T]\times \T^2)$ with support in $[0,T)$. In the limit, we recover the weak formulation~\eqref{eq:weakDef}.  Hence $\phi$ is a weak solution on $[0,T]$.
\end{proof}

\subsection{Global existence with advecting flows of small dissipation time} \label{s:globalExistence}

In this section, we tackle the issue of global existence for solutions of \eqref{e:aks}. Here, we make stronger assumptions on the advecting velocity, namely, we assume that $\bv$ is Lipschitz continuous in space uniformly in time, 
$\bv \in L^{\infty}([0,\infty),W^{1,\infty}(\TT^2))$ and continue to assume that $\bv$ is divergence free.

Let $\mathcal{S}_{s, t}$, $0\leq s\leq t$, be the solution operator of the advection-hyperdiffusion equation
\begin{align} \label{eq:linearPart}
\partial_t f + \bv \cdot \nabla f+\Delta^2 f=0.
\end{align}
That is, $\mathcal{S}_{s,t}$ maps the solution of the above equation at time $s$ to the solution at time $t\geq s$. The solution operator satisfies the semiflow property and form an evolution system (cf. \cite{Lunardi}). 

We introduce the concept of dissipation time for \eqref{eq:linearPart} (see \cite{FFITArXiv20}). Informally, it is the time it takes the system to reduce the energy, i.e., the norm of the solution in $L^2(\TT^2)$, by half. We give the precise definition below.

\begin{definition} \label{d:dissipationTime}
The number $0<\tau^\ast<\infty$, where
\begin{align}
\tau^*=\inf \set[\big]{t\ge 0 \st \norm{\mathcal{S }_{s,s+t}}_{\rL^2 \to \rL^2}\leq \frac{1}{2}\,, \text{ for all $s\geq 0$}},
\end{align}
is called the {\em dissipation time}  associated to the system $\mathcal{S}_{s,t}$, $0\leq s\leq t$. With slight abuse of notation, we will also refer to $\tau^\ast$ as the dissipation time of $\bv$.
\end{definition}

We will show that the mild solution constructed in Section \ref{s:localExistence} can be continued for any time $t> T$, provided the flow generated by the velocity field $\bv$ induces a  sufficiently small dissipation time. Indeed, the proof of Theorem \eqref{t:localAKS} shows that, given the initial data for the problem, the time of existence of the mild solution depends only on the $L^2$-norm of the initial data.

\begin{remark} \label{r:diffusionTime}
When $\bv$ is more regular, $\bv\in L^\infty([0,\infty),C^2(\TT^2))$, $\tau^\ast$ can become arbitrarily small, provided the amplitude of $\bv$ is large enough and $\bv$ is {\em weakly mixing} (see \cite[Proposition 1.4, Definition 3.1]{FFITArXiv20}).
While there are no known explicit examples of optimal mixing flows with $C^2$ regularity in two dimensions  (for an example  in the Lipschitz class, we refer to \cite{ACM19}, see also \cite{EZ19}), many examples can be given using probabilistic methods \cite{BBPSArXiv19}.
\end{remark}

In the definition of $\tau^\ast$ we have implicitly used the fact that the projection $\PP$ onto  mean-free functions commutes with $S_{s,t}$. 
To control the $L^2$-norm of $\phi(t)$, we employ energy estimates, after we project onto the subspace of mean-free elements. Indeed, the mean of a non-constant solution  grows exponentially in time.
We denote the mean of $\phi \in L^p(\TT^2)$ by $\bar \phi=\int_{\T^2}\phi(x)\,dx$ and set $\psi=\PP \phi = \phi-\bar \phi\in \rL^2(\TT^2)$, then $\bar\phi$ formally satisfies 
\begin{align}\label{e:mean}
\frac{d}{dt}\bar \phi=-\frac{1}{2}\norm{\nabla \phi}_{L^2}^2= -\frac{1}{2}\norm{\nabla \psi}_{L^2}^2\,,
\end{align}
and $\psi$ formally satisfies
\begin{align}\label{e:mks}
\partial_t \psi+\bv \cdot \nabla \psi+\frac{1}{2}|\nabla \psi|^2-\frac{1}{2}\norm{\nabla \psi}_{L^2}^2+\Delta^2 \psi+\Delta \psi=0\,.
\end{align}
Recall that the mild solution $\phi$ satisfies $\sup_{0<t<T} t^{1/4}\, \|\nabla \phi(t)\|_{L^2}<\infty$. Therefore, $\|\nabla \phi(t)\|^2_{L^2}$ is integrable  in $(0,T)$, so that  \eqref{e:mean} holds at least a.e. in $(0,T)$. It follows, using arguments similar to those in the proof of  Proposition \ref{p:mildEqualweak}, that $\psi$ is a weak solution of \eqref{e:mks} on $[0, T]$.

We begin by deriving {\em a priori} energy estimates for $\psi$ . Let $N(\psi)=\frac{1}{2}|\nabla \psi|^2-\frac{1}{2}\norm{\nabla \psi}_{L^2}^2+\Delta \psi$. Then~\eqref{e:mks} becomes
\begin{align}
\partial_t \psi + \bv \cdot \nabla \psi+\Delta^2 \psi+ N(\psi)=0\,.
\end{align}

\subsubsection{Bounds on the non-linear operator $N$} \label{s:Nestimates}
We observe that the following estimates hold for the operator $N$.
\begin{itemize}[]
\item[(1)] For any $\psi\in \rH^2(\T^2)$, there exists a constant $C=C(L_1,L_2)$ such that 
\begin{align}\label{e:Nenergy}
\abs[\Big]{\int_{\T^2}\psi N(\psi)\,dx}& \leq \frac{1}{2}\norm{\Delta \psi}_{L^2}^2+C\norm{\psi}_{L^2}^2+C\norm{\psi}_{L^2}^6\,.
\end{align}
In fact, we first have by Cauchy's inequality with $\epsilon$ and by H\"older's inequality:
\begin{align*}
\abs[\Big]{\int_{\T^2}\psi N(\psi)\,dx}& \leq \frac{1}{2}\abs[\Big]{\int\psi |\nabla \psi|^2\,dx}+\abs[\Big]{\int \psi\Delta \psi\,dx}+\frac{1}{2}\norm{\nabla \psi}_{L^2}^2\norm{\psi}_{L^1}  \\
&\leq \frac{1}{2}\norm{\psi}_{L^2}\norm{\nabla \psi}_{L^4}^2+ \norm{\psi}_{L^2}^2+\frac{1}{4}\norm{\Delta \psi}_{L^2}^2+\frac{\sqrt{L_1L_2}}{2}\norm{\nabla \psi}_{L^2}^2 \norm{\psi}_{L^2}\,.
\end{align*}
We then use the Gagliardo-Nirenberg interpolation inequality 
\begin{align*}
\norm{\nabla \psi}_{L^4}&\leq C\norm{\Delta\psi}_{L^2}^{3/4}\norm{\psi}_{L^2}^{1/4}\,,\\
\norm{\nabla \psi}_{L^2}& \leq C\norm{\Delta\psi}_{L^2}^{1/2}\norm{\psi}_{L^2}^{1/2}\,,
\end{align*}
and Young's inequality with $\epsilon$ to get the desired result.

\item[(2)] For any $\psi\in \rH^2(\T^2)$, there exists a constant $C=C(L_1,L_2)$ such that 
\begin{align}\label{e:N}
\norm{N(\psi)}_{L^2}\leq C\norm{\Delta \psi}_{L^2}^{2}+C\norm{\Delta\psi}_{L^2}\,.
\end{align}
Indeed, Poincar\' e inequality, interpolation, and other standard inequalities give: 
\begin{align*}
\norm{N(\psi)}_{L^2} &\leq \frac{1}{2}\norm{\nabla \psi}_{L^4}^2+\frac{L_1L_2}{2}\norm{\nabla \psi}_{L^2}^2+\norm{\Delta \psi}_{L^2}\\
&\leq C\norm{\Delta \psi}_{L^2}^{3/2}\norm{\psi}_{L^2}^{1/2}+\frac{L_1L_2}{2\lambda_1}\norm{\Delta \psi}_{L^2}^2+\norm{\Delta\psi}_{L^2}\\
&\leq C\norm{\Delta \psi}_{L^2}^{2}+C\norm{\Delta\psi}_{L^2}\,,
\end{align*}
\end{itemize}
where $\lambda_1$ is the first eigenvalue of the Laplace-Beltrami operator $-\Delta$ on $\T^2$.

Exploiting these estimates, we establish that the $L^2$ norm of $\psi$ will not double within a certain interval of  time.

\begin{lemma}\label{l:nodouble}
For any $B>0$, let 
\begin{align}  \label{eq:T0Bdef}
T_0(B)=\int_{B^2}^{4B^2}\frac{1}{Cy+Cy^3}\,dy ,
\end{align}
where $C=C(L_1,L_2)$ is the constant in \eqref{e:Nenergy}-\eqref{e:N}.
Let $\psi=\PP \phi$, where $\phi$ is a mild solution of \eqref{e:aks} on $[0,T]$. 
If $\norm{\psi(t_0)}_{L^2}\leq B$, then for any $t_0\leq t\leq t_0+T_0(B)$, $0\leq t_0\leq T$, it holds
\begin{align} \label{e:nodouble}
\norm{\psi(t)}_{L^2}\leq 2\norm{\psi(t_0)}_{L^2}\,.
\end{align}
\end{lemma}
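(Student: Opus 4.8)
The plan is to set up an energy differential inequality for the quantity $y(t) := \|\psi(t)\|_{L^2}^2$ and then compare it to the scalar ODE whose integral defines $T_0(B)$. First I would test the equation $\partial_t \psi + \bv\cdot\nabla\psi + \Delta^2\psi + N(\psi) = 0$ against $\psi$ in $\mathring{L}^2$. Since $\bv$ is divergence free, the advection term $\int_{\T^2} \psi\,(\bv\cdot\nabla\psi)\,dx$ vanishes, and the hyperdiffusion term gives $\int_{\T^2}\psi\,\Delta^2\psi\,dx = \|\Delta\psi\|_{L^2}^2$. This yields
\begin{equation} \label{eq:energyDiffIneq}
\frac{1}{2}\frac{d}{dt}\|\psi(t)\|_{L^2}^2 + \|\Delta\psi(t)\|_{L^2}^2 = -\int_{\T^2}\psi\, N(\psi)\,dx.
\end{equation}
As in Proposition \ref{p:mildEqualweak}, this identity is rigorously justified (a.e. in $t$) because $\psi$ is a weak solution of \eqref{e:mks} with $\psi\in L^2((\ep,T);H^2)$ and $\partial_t\psi\in L^2((\ep,T);H^{-2})$, so that $t\mapsto \|\psi(t)\|_{L^2}^2$ is absolutely continuous; I would note that one works on $[\ep,T]$ and sends $\ep\to 0$, exactly as before, to avoid the singularity of $\|\nabla\psi\|_{L^2}$ at $t=0$.

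Next I would absorb the right-hand side using the key estimate \eqref{e:Nenergy}, which bounds $|\int_{\T^2}\psi\,N(\psi)\,dx|$ by $\frac{1}{2}\|\Delta\psi\|_{L^2}^2 + C\|\psi\|_{L^2}^2 + C\|\psi\|_{L^2}^6$. Substituting into \eqref{eq:energyDiffIneq}, the half-power of $\|\Delta\psi\|_{L^2}^2$ is absorbed by the dissipation on the left, leaving
\begin{equation} \label{eq:yODEineq}
\frac{d}{dt}\|\psi(t)\|_{L^2}^2 \leq 2C\|\psi(t)\|_{L^2}^2 + 2C\|\psi(t)\|_{L^2}^6.
\end{equation}
In terms of $y(t) = \|\psi(t)\|_{L^2}^2$ this reads $y'(t) \leq C'y(t) + C'y(t)^3$ (after relabeling the constant so that it matches the constant $C$ appearing in $T_0(B)$ through \eqref{eq:T0Bdef}); I would choose the constant in \eqref{eq:T0Bdef} to be exactly this one so the bookkeeping is consistent.

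Finally I would run a comparison argument. Let $z(t)$ solve the ODE $z' = Cz + Cz^3$ with $z(t_0) = y(t_0) = \|\psi(t_0)\|_{L^2}^2 \leq B^2$; then $y(t)\leq z(t)$ for $t\geq t_0$ by the standard ODE comparison principle, since the right-hand side is locally Lipschitz and increasing in the state variable. The doubling condition \eqref{e:nodouble}, namely $\|\psi(t)\|_{L^2}\leq 2\|\psi(t_0)\|_{L^2}$, is equivalent to $y(t)\leq 4\,y(t_0)$, so it suffices to show $z$ does not reach $4y(t_0)$ before time $t_0+T_0(B)$. Separating variables in the ODE gives that the time for $z$ to grow from $y(t_0)$ to $4y(t_0)$ equals $\int_{y(t_0)}^{4y(t_0)} \frac{dy}{Cy+Cy^3}$, and since the integrand $\frac{1}{Cy+Cy^3}$ is decreasing in $y$, this time is bounded below by $\int_{B^2}^{4B^2}\frac{dy}{Cy+Cy^3} = T_0(B)$ whenever $y(t_0)\leq B^2$. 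Hence $z(t)\leq 4y(t_0)$, and therefore $y(t)\leq 4\|\psi(t_0)\|_{L^2}^2$, for all $t_0\leq t\leq t_0+T_0(B)$, which is the claim. The main technical point to be careful about — rather than a deep obstacle — is the monotonicity comparison in this last step: one must check that lowering the initial value $y(t_0)$ from a generic value up to $B^2$ only \emph{increases} the time to quadruple, which is what guarantees that the worst case $\|\psi(t_0)\|_{L^2}=B$ controls all smaller data through the single threshold $T_0(B)$.
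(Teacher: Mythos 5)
Your proposal is correct and follows essentially the same route as the paper: derive the energy identity for $\psi$, absorb the nonlinear term via \eqref{e:Nenergy} into half the dissipation to get $\frac{d}{dt}\|\psi\|_{L^2}^2 \leq C\|\psi\|_{L^2}^2 + C\|\psi\|_{L^2}^6$, compare with the ODE $y'=Cy+Cy^3$, and use the monotonicity of $B\mapsto T_0(B)$ to reduce to the worst case $\|\psi(t_0)\|_{L^2}=B$. Your treatment is, if anything, slightly more careful than the paper's (explicit separation of variables and the justification of the energy identity via Proposition \ref{p:mildEqualweak}); the only loose point is that the monotonicity of $a\mapsto\int_a^{4a}\frac{dy}{Cy+Cy^3}$ does not follow merely from the integrand being decreasing, but it does follow at once from the substitution $y=au$, which turns the integral into $\int_1^4\frac{du}{Cu+Ca^2u^3}$.
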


\begin{proof}
 By the energy identity for $\phi$, we derive an energy identity for $\psi$:
\begin{align*}
\frac{1}{2}\frac{d}{dt}\norm{\psi}_{L^2}^2=-\norm{\Delta \psi}_{L^2}^2+\int \psi\,N(\psi) \,dx\,.
\end{align*}
Applying~\eqref{e:Nenergy}, we have
\begin{align}\label{e:genergy}
\frac{d}{dt}\norm{\psi}_{L^2}^2 &\leq -\norm{\Delta \psi}_{L^2}^2+ C\norm{\psi}_{L^2}^2+C\norm{\psi}_{L^2}^6 \leq C\norm{\psi}_{L^2}^2+C\norm{\psi}_{L^2}^6\,.
\end{align}
By Gr\"onwall's inequality,  $\norm{\psi}_{L^2}^2$ can at most grow as the solution of the following ode
\begin{align}
\frac{d}{dt} y= C y + C y^3\,, \qquad y(t_0)=\norm{\psi(t_0)}_{L^2}^2\,.
\end{align}
which implies
\begin{align*}
\norm{\psi(t)}_{L^2}\leq 2\norm{\psi(t_0)}_{L^2}, \quad t_0\leq t\leq t_0+T_0(\norm{\psi(t_0)}_{L^2}),
\end{align*}
where   $T_0(\norm{\psi(t_0)}_{L^2})$ is defined as in \eqref{eq:T0Bdef} with $B=\|\psi(t_0)\|_{L^2}$.
Finally, since $T_0(B)$  is a decreasing function in $B$, $T_0(B)\leq T_0(\norm{\psi(t_0)}_{L^2})$ as long as $\norm{\psi(t_0)}_{L^2}\leq B$. Thus~\eqref{e:nodouble} is proved.
\end{proof}

In the previous Lemma, we neglected the action of hyperdiffusion, which is encoded in the  integral of $\|\Delta \psi(t)\|_{L^2}^2$. Next, we examine the case when this term is large on time intervals of the order of $T_0(B)$, showing that then
hyperdiffusion alone can overcome the growth of the norm of $\psi$. When this term is small, instead, we will gain decay by the enhanced diffusion of the operator $S_{s,t}$, provided $\bv$ satisfies some conditions (cf. Remark \ref{r:diffusionTime}). 

\begin{lemma}\label{l:h2large}
Let  $\mu>0$.  Given $B>0$, if $\psi$ satisfies $\norm{\psi(t_0)}_{L^2}\leq B$, and if for  $0<\tau< T_0(B)$,
\begin{align}\label{e:h2large}
\frac{1}{\tau}\int_{t_0}^{t_0+\tau} \norm{\Delta \psi(t)}_{L^2}^2\,dt \ge 2\mu \norm{\psi(t_0)}_{L^2}^2+4C \norm{\psi(t_0)}_{L^2}^2+64C \norm{\psi(t_0)}_{L^2}^6\,,
\end{align}
where $C=C(L_1,L_2)$ is the constant in \eqref{e:Nenergy}-\eqref{e:N}, then
\begin{align}\label{e:decay}
\norm{\psi(t_0+\tau)}_{L^2}\leq e^{-\mu \tau}\norm{\psi(t_0)}_{L^2}\,.
\end{align}
\end{lemma}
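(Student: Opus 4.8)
The plan is to return to the energy balance for $\psi=\PP\phi$, this time retaining the hyperdiffusion term $-\norm{\Delta\psi}_{L^2}^2$ that was discarded in the proof of Lemma \ref{l:nodouble}, and then to convert the lower bound \eqref{e:h2large} on its time integral into genuine exponential decay. The key structural observation is that the numerical coefficients $4C$ and $64C$ appearing in \eqref{e:h2large} are exactly those generated when the no-doubling bound \eqref{e:nodouble} is applied to the $\norm{\psi}_{L^2}^2$ and $\norm{\psi}_{L^2}^6$ terms, so that after substitution those contributions cancel and leave only the gain $2\mu\tau$.

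First I would record the integrated energy inequality. Since $\psi$ is a weak solution of \eqref{e:mks} (established just above the statement via the argument of Proposition \ref{p:mildEqualweak}), the energy identity $\tfrac12\tfrac{d}{dt}\norm{\psi}_{L^2}^2=-\norm{\Delta\psi}_{L^2}^2+\int_{\TT^2}\psi\,N(\psi)\,dx$ holds, and combining it with \eqref{e:Nenergy} gives the first inequality in \eqref{e:genergy}, with $C$ the constant of \eqref{e:Nenergy}:
\[
\frac{d}{dt}\norm{\psi}_{L^2}^2 \le -\norm{\Delta\psi}_{L^2}^2+C\norm{\psi}_{L^2}^2+C\norm{\psi}_{L^2}^6 .
\]
Integrating this over $[t_0,t_0+\tau]$ yields
\[
\norm{\psi(t_0+\tau)}_{L^2}^2 \le \norm{\psi(t_0)}_{L^2}^2-\int_{t_0}^{t_0+\tau}\norm{\Delta\psi(t)}_{L^2}^2\,dt+C\int_{t_0}^{t_0+\tau}\big(\norm{\psi(t)}_{L^2}^2+\norm{\psi(t)}_{L^2}^6\big)\,dt .
\]

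Next I would control the two lower-order integrals using Lemma \ref{l:nodouble}. Because $\norm{\psi(t_0)}_{L^2}\le B$ and $\tau<T_0(B)\le T_0(\norm{\psi(t_0)}_{L^2})$, where the second inequality is the monotone (decreasing) dependence of $T_0$ on its argument recorded after \eqref{eq:T0Bdef}, the no-doubling estimate \eqref{e:nodouble} applies on the whole of $[t_0,t_0+\tau]$, so $\norm{\psi(t)}_{L^2}\le 2\norm{\psi(t_0)}_{L^2}$ there. Hence
\[
C\int_{t_0}^{t_0+\tau}\big(\norm{\psi(t)}_{L^2}^2+\norm{\psi(t)}_{L^2}^6\big)\,dt \le 4C\tau\norm{\psi(t_0)}_{L^2}^2+64C\tau\norm{\psi(t_0)}_{L^2}^6 .
\]
Multiplying hypothesis \eqref{e:h2large} by $\tau$ and inserting the resulting lower bound $\int_{t_0}^{t_0+\tau}\norm{\Delta\psi}_{L^2}^2\,dt\ge \tau(2\mu+4C)\norm{\psi(t_0)}_{L^2}^2+64C\tau\norm{\psi(t_0)}_{L^2}^6$ into the integrated balance, the $4C\tau$ and $64C\tau$ terms cancel exactly, leaving
\[
\norm{\psi(t_0+\tau)}_{L^2}^2 \le (1-2\mu\tau)\,\norm{\psi(t_0)}_{L^2}^2 .
\]
Finally, the elementary inequality $1-x\le e^{-x}$ with $x=2\mu\tau$, together with $\norm{\psi(t_0)}_{L^2}^2\ge 0$, gives $\norm{\psi(t_0+\tau)}_{L^2}^2\le e^{-2\mu\tau}\norm{\psi(t_0)}_{L^2}^2$, and taking nonnegative square roots produces \eqref{e:decay}.

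I do not expect a genuine analytic obstacle here: once \eqref{e:genergy} is integrated, the computation is essentially forced. The only points demanding care are bookkeeping ones. The principal one is verifying that \eqref{e:nodouble} is legitimately available on the entire interval $[t_0,t_0+\tau]$, which rests on the strict inequality $\tau<T_0(B)$ and the monotonicity of $T_0$; the second is checking that the coefficients $4C$ and $64C$ in \eqref{e:h2large} are precisely those produced by the no-doubling bound on the $\norm{\psi}_{L^2}^2$ and $\norm{\psi}_{L^2}^6$ integrals, since this exact matching is what drives the cancellation and isolates the clean decay factor $2\mu\tau$.
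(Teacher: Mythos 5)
Your proof is correct and follows essentially the same route as the paper's: integrate the energy inequality \eqref{e:genergy} over $[t_0,t_0+\tau]$, invoke Lemma \ref{l:nodouble} to bound the lower-order terms by $4C\tau\norm{\psi(t_0)}_{L^2}^2+64C\tau\norm{\psi(t_0)}_{L^2}^6$, and use hypothesis \eqref{e:h2large} to obtain $(1-2\mu\tau)\le e^{-2\mu\tau}$. The coefficient bookkeeping and the justification for applying the no-doubling bound on the whole interval match the paper's argument exactly.
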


\begin{proof}
First, Lemma~\ref{l:nodouble} gives
\begin{align*}
\norm{\psi(t_0+t)}_{L^2}\leq 2\norm{\psi(t_0)}_{L^2}\,,\quad \forall~ 0<t\leq T_0(B)\,.
\end{align*}
Hence, $\|\psi(t)\|_{L^2}$ is uniformly bounded in the interval $[t_0,t_0+T_0(B)]$. Integrating the energy estimate~\eqref{e:genergy} over the interval $[t_0,t_0+\tau]\subset [t_0,t_0+T_0(B)]$ and using this bound, we have
\begin{equation*}
 \begin{aligned}
\norm{\psi(t_0+\tau)}_{L^2}^2&\leq \norm{\psi(t_0)}_{L^2}^2-\int_{t_0}^{t_0+\tau}\norm{\Delta \psi(t)}_{L^2}^2\,dt+4C\tau \norm{\psi(t_0)}_{L^2}^2\\
&\quad+64C\tau \norm{\psi(t_0)}_{L^2}^6\leq (1-2\mu\tau)\norm{\psi(t_0)}_{L^2}^2\leq e^{-2\mu\tau}\norm{\psi(t_0)}_{L^2}^2,
\end{aligned}
\end{equation*}
where the inequalities on the last line follows by applying hypothesis \eqref{e:h2large}.
\end{proof}

\begin{lemma} \label{l:h2small}
Let $\mu>0$. Given  $B>0$, let $T_1(B)$ be defined by
\begin{align}\label{e:T1}
T_1(B)=\frac{1}{4C(2\mu +4C+64CB^4)B+4C(2\mu +4C+64CB^4)^{1/2}},
\end{align}
where $C=C(L_1,L_2)$ is the constant in \eqref{e:Nenergy}-\eqref{e:N}.
If $\norm{\psi(t_0)}_{L^2}\leq B$ and the dissipation time of $\bv$, $\tau^\ast$, satisfies
\begin{align}\label{e:choiceoftau}
\tau^\ast\leq \min \Big( T_0(B), ~T_1(B), ~\frac{1}{4\mu}\Big) \,,
\end{align}
then~\eqref{e:decay} holds, even when
\begin{align}\label{e:h2small}
\frac{1}{\tau^\ast}\int_{t_0}^{t_0+\tau^\ast} \norm{\Delta \psi(t)}_{L^2}^2\,dt \leq  2\mu \norm{\psi(t_0)}_{L^2}^2+4C \norm{\psi(t_0)}_{L^2}^2+64C \norm{\psi(t_0)}_{L^2}^6.
\end{align}
\end{lemma}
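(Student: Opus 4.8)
The plan is to represent $\psi$ on $[t_0,t_0+\tau^\ast]$ by the Duhamel (variation-of-constants) formula for the evolution system $\mathcal{S}_{s,t}$ generated by the advection--hyperdiffusion part $\bv\cdot\nabla+\Delta^2$, treating $N(\psi)$ as a forcing term. Writing the equation for $\psi$ as $\partial_t\psi+(\bv\cdot\nabla+\Delta^2)\psi=-N(\psi)$, this gives
\[
\psi(t_0+\tau^\ast)=\mathcal{S}_{t_0,\,t_0+\tau^\ast}\psi(t_0)-\int_{t_0}^{t_0+\tau^\ast}\mathcal{S}_{s,\,t_0+\tau^\ast}N(\psi(s))\,ds .
\]
The guiding idea is that in the regime \eqref{e:h2small}, where hyperdiffusion is too weak to force decay on its own (the complementary case to Lemma \ref{l:h2large}), the forcing integral is small, so $\psi(t_0+\tau^\ast)$ is close to the purely advected--diffused state $\mathcal{S}_{t_0,t_0+\tau^\ast}\psi(t_0)$, whose norm is halved by the very definition of the dissipation time. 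I will then estimate the two terms separately and choose constants so their sum falls below $e^{-\mu\tau^\ast}\norm{\psi(t_0)}_{L^2}$.

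For the linear term, since $\psi(t_0)\in\rL^2(\TT^2)$, Definition \ref{d:dissipationTime} gives $\norm{\mathcal{S}_{t_0,\,t_0+\tau^\ast}\psi(t_0)}_{L^2}\le\tfrac12\norm{\psi(t_0)}_{L^2}$. For the forcing term I use that $\mathcal{S}_{s,t}$ is an $L^2$-contraction — which follows from testing \eqref{eq:linearPart} against $f$ and using $\mathrm{div}\,\bv=0$, giving $\tfrac12\tfrac{d}{dt}\norm{f}_{L^2}^2=-\norm{\Delta f}_{L^2}^2\le0$ — together with the nonlinear bound \eqref{e:N}:
\[
\Big\|\int_{t_0}^{t_0+\tau^\ast}\mathcal{S}_{s,\,t_0+\tau^\ast}N(\psi(s))\,ds\Big\|_{L^2}\le\int_{t_0}^{t_0+\tau^\ast}\norm{N(\psi(s))}_{L^2}\,ds\le C\int_{t_0}^{t_0+\tau^\ast}\big(\norm{\Delta\psi}_{L^2}^2+\norm{\Delta\psi}_{L^2}\big)\,ds .
\]
I bound $\int\norm{\Delta\psi}_{L^2}^2$ directly by the hypothesis \eqref{e:h2small}, which reads $\int_{t_0}^{t_0+\tau^\ast}\norm{\Delta\psi}_{L^2}^2\,ds\le\tau^\ast A\,\norm{\psi(t_0)}_{L^2}^2$ with $A:=2\mu+4C+64C\norm{\psi(t_0)}_{L^2}^4$, and the linear term by Cauchy--Schwarz, $\int\norm{\Delta\psi}_{L^2}\le(\tau^\ast)^{1/2}\big(\int\norm{\Delta\psi}_{L^2}^2\big)^{1/2}\le\tau^\ast A^{1/2}\norm{\psi(t_0)}_{L^2}$. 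Using $\norm{\psi(t_0)}_{L^2}\le B$ and $A\le A_B:=2\mu+4C+64CB^4$, the forcing term is at most $C\tau^\ast(A_B B+A_B^{1/2})\norm{\psi(t_0)}_{L^2}$, which equals exactly $\tfrac14\norm{\psi(t_0)}_{L^2}$ once $\tau^\ast\le T_1(B)$, since $T_1(B)$ in \eqref{e:T1} was defined precisely as $\big(4C(A_B B+A_B^{1/2})\big)^{-1}$.

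Combining the two estimates yields $\norm{\psi(t_0+\tau^\ast)}_{L^2}\le\big(\tfrac12+\tfrac14\big)\norm{\psi(t_0)}_{L^2}=\tfrac34\norm{\psi(t_0)}_{L^2}$. To convert this into the exponential bound \eqref{e:decay}, I use the last condition in \eqref{e:choiceoftau}, $\tau^\ast\le\tfrac1{4\mu}$, so that $\mu\tau^\ast\le\tfrac14$ and hence $e^{-\mu\tau^\ast}\ge e^{-1/4}>\tfrac34$; therefore $\tfrac34\le e^{-\mu\tau^\ast}$ and \eqref{e:decay} follows. The remaining condition $\tau^\ast\le T_0(B)$ serves to keep $[t_0,t_0+\tau^\ast]$ inside the no-doubling window of Lemma \ref{l:nodouble}, so $\norm{\psi}_{L^2}$ stays bounded (by $2B$) on the whole interval and every quantity above is finite. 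The main obstacle I anticipate is not this chain of inequalities, which is bookkeeping engineered to match the definitions of $T_1(B)$ and $\tau^\ast$, but rather justifying the Duhamel identity itself: $\psi$ is only a weak solution of \eqref{e:mks}, so I must verify it coincides with the mild solution associated to the evolution family $\mathcal{S}_{s,t}$. This requires the regularity $\psi\in L^2([t_0,t_0+\tau^\ast];\rH^2)$ established earlier (so that, via \eqref{e:N}, $N(\psi)\in L^1([t_0,t_0+\tau^\ast];\rL^2)$ and the B\"ochner integral is well defined) and an approximation argument in the spirit of Proposition \ref{p:mildEqualweak}, together with the fact that $\mathcal{S}_{s,t}$ preserves mean-free functions so that all terms stay in $\rL^2$.
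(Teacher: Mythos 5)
Your proposal is correct and follows essentially the same route as the paper's proof: the Duhamel representation via the evolution family $\mathcal{S}_{s,t}$, the factor $\tfrac12$ from Definition \ref{d:dissipationTime}, the bound \eqref{e:N} combined with Cauchy--Schwarz and hypothesis \eqref{e:h2small} to make the forcing term at most $\tfrac14\norm{\psi(t_0)}_{L^2}$ via the definition of $T_1(B)$, and the condition $\tau^\ast\le\tfrac{1}{4\mu}$ to convert $\tfrac34$ into $e^{-\mu\tau^\ast}$. Your added remarks on the $L^2$-contractivity of $\mathcal{S}_{s,t}$ and on justifying the Duhamel identity for the weak solution are points the paper leaves implicit, but they do not change the argument.
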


\begin{proof}
Since $\psi$ is a mild and weak solution of \eqref{e:mks} on the interval $[t_0,t_0+\tau^\ast]$,
\begin{align}
\psi(t_0+\tau^\ast)=\mathcal{S}_{t_0, \tau^\ast} \psi(t_0)+\int_0^{\tau^\ast}\mathcal{S}_{t_0+t, t_0+\tau^\ast}N(\psi(t_0+t))\,dt\,,
\end{align}
and, by \eqref{e:N} and the definition of $\tau^\ast$, 
\begin{align*}
\norm{\psi(t_0+&\tau^\ast)}_{L^2}\leq \frac{1}{2}\norm{\psi(t_0)}_{L^2}+\int_0^{\tau^\ast}\norm{N(\psi(t_0+t))}_{L^2}\,dt\\
&\qquad \quad \leq \frac{1}{2}\norm{\psi(t_0)}_{L^2}+C\int_{t_0}^{t_0+\tau^\ast}(\norm{\Delta \psi}_{L^2}^2+\norm{\Delta \psi}_{L^2})\,dt\\
&\leq  \frac{1}{2}\norm{\psi(t_0)}_{L^2}+C\int_{t_0}^{t_0+\tau^\ast}\norm{\Delta \psi}_{L^2}^2\,dt
+C\paren[\Big]{\int_{t_0}^{t_0+\tau^\ast}\norm{\Delta \psi}_{L^2}^2\,dt}^{1/2}\sqrt{\tau^\ast}\,.
\end{align*}
Next, from the definition of $T_1$ it follows that $T_1(B)\leq T_1(\norm{\psi(t_0)}_{L^2})$. Hence $\tau^\ast\leq  T_1(\norm{\psi(t_0)}_{L^2})$ and using~\eqref{e:h2small} we get
\begin{multline*}
\norm{\psi(t_0+\tau^\ast)}_{L^2}\leq \frac{1}{2}\norm{\psi(t_0)}_{L^2}+\frac{1}{4}\norm{\psi(t_0)}_{L^2} \leq \\
\leq (1-\mu \tau^\ast)\norm{\psi(t_0)}_{L^2}
\leq e^{-\mu \tau^\ast}\norm{\psi(t_0)}_{L^2}\,,
\end{multline*}
where we used $\tau^\ast\leq \frac{1}{4\mu}$ in the second to last inequality.
\end{proof}

Our main result is an exponential decay estimate for $\psi$, which implies global existence for the mild solution $\psi$.

\begin{theorem} \label{t:globalEnergy}
Let $\mu >0$. Let $\psi$ be a mild solution of \eqref{e:mks} on $[0,T]$. For $0\leq t_0 \leq T$, let  $\norm{\psi(t_0)}_{L^2}=B>0$.
If the dissipation time of $\bv$, $\tau^\ast$, satisfies~\eqref{e:choiceoftau}, then  there exists a constant $C_0$,  such that for $t>0$,
\begin{align} \label{eq:globalEnergy}
\norm{\psi(t_0+t)}_{L^2}\leq C_0e^{-\mu t}\norm{\psi(t_0)}_{L^2}\,.
\end{align}
\end{theorem}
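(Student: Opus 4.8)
The plan is to iterate the one-step decay estimate \eqref{e:decay} over consecutive intervals of length $\tau^\ast$. The crucial observation is that Lemmas \ref{l:h2large} and \ref{l:h2small} together form an exhaustive dichotomy: on any interval $[s, s+\tau^\ast]$ with $\norm{\psi(s)}_{L^2} \leq B$, either the time-averaged hyperdiffusion $\frac{1}{\tau^\ast}\int_s^{s+\tau^\ast}\norm{\Delta\psi}_{L^2}^2\,dt$ exceeds the threshold in \eqref{e:h2large}, in which case Lemma \ref{l:h2large} (with $\tau=\tau^\ast$) yields $\norm{\psi(s+\tau^\ast)}_{L^2} \leq e^{-\mu\tau^\ast}\norm{\psi(s)}_{L^2}$ from hyperdiffusion alone; or it falls below the threshold in \eqref{e:h2small}, in which case the smallness of $\tau^\ast$ encoded in \eqref{e:choiceoftau} lets Lemma \ref{l:h2small} extract the same decay from the enhanced dissipation of $\mathcal{S}_{s,t}$. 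Either way one gains a factor $e^{-\mu\tau^\ast}\leq 1$ over each full interval.

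First I would set $s_k = t_0 + k\tau^\ast$ and prove by induction that $\norm{\psi(s_k)}_{L^2} \leq e^{-k\mu\tau^\ast}\norm{\psi(t_0)}_{L^2}$ for every $k\geq 0$. The base case is trivial; for the inductive step, since $\norm{\psi(s_k)}_{L^2} \leq \norm{\psi(t_0)}_{L^2} = B$, the hypotheses of both lemmas (in particular $\tau^\ast \leq \min(T_0(B), T_1(B), \tfrac{1}{4\mu})$) remain in force with the \emph{same} $B$, so the dichotomy above applies on $[s_k, s_{k+1}]$ and produces $\norm{\psi(s_{k+1})}_{L^2} \leq e^{-\mu\tau^\ast}\norm{\psi(s_k)}_{L^2}$. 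No re-derivation of constants is needed at each step precisely because the norm never exceeds $B$ while $T_0$ and $T_1$ are decreasing in their argument.

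To pass from decay at the discrete times $s_k$ to decay for all $t>0$, I would write $t = n\tau^\ast + r$ with $n\in\NN$ and $0\leq r < \tau^\ast \leq T_0(B)$. On the final partial interval $[s_n, s_n+r]$ the no-doubling Lemma \ref{l:nodouble} applies (since $r < T_0(B)$ and $\norm{\psi(s_n)}_{L^2}\leq B$), giving $\norm{\psi(t_0+t)}_{L^2} \leq 2\norm{\psi(s_n)}_{L^2} \leq 2e^{-n\mu\tau^\ast}\norm{\psi(t_0)}_{L^2}$. Finally, using $n\tau^\ast = t - r \geq t - \tau^\ast$ together with $\tau^\ast \leq \tfrac{1}{4\mu}$, one has $e^{-n\mu\tau^\ast} \leq e^{\mu\tau^\ast}e^{-\mu t} \leq e^{1/4}e^{-\mu t}$, so \eqref{eq:globalEnergy} holds with $C_0 = 2e^{1/4}$.

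The point requiring the most care is not any single estimate but the interplay with global existence: the bound \eqref{eq:globalEnergy} is a priori meaningful only on the interval of existence of $\psi$, so one must argue that this interval is all of $[0,\infty)$. This follows by combining the uniform bound $\norm{\psi(t_0+t)}_{L^2}\leq 2B$, valid on every $\tau^\ast$-interval by Lemma \ref{l:nodouble}, with the continuation criterion of Corollary \ref{c:contPrinciple}: the $L^2$ norm of $\psi$, and hence that of $\phi$ after controlling the mean via \eqref{e:mean} (the exponential decay of $\norm{\psi}_{L^2}$ makes $\norm{\nabla\psi}_{L^2}^2$ integrable in time, so $\bar\phi$ stays bounded), cannot blow up in finite time. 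Thus the maximal time of existence is infinite, and the decay estimate propagates by repeated application of the local theory, completing the bootstrap.
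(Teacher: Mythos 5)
Your proposal is correct and follows essentially the same route as the paper: the dichotomy between Lemmas \ref{l:h2large} and \ref{l:h2small} gives the one-step decay over each $\tau^\ast$-interval, which is then iterated and converted to continuous-time decay at the cost of a constant. Your treatment of the final partial interval via Lemma \ref{l:nodouble} (yielding $C_0=2e^{1/4}$) is in fact slightly more careful than the paper's, which bounds $\norm{\psi(t_0+t)}_{L^2}$ by the value at the last grid point without accounting for possible growth within the partial interval and takes $C_0=e^{1/4}$.
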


\begin{proof}
Take $\tau=\tau^\ast$, if~\eqref{e:h2large} holds, then applying Lemma~\ref{l:h2large}, we get 
\begin{align}\label{e:gdecay}
\norm{\psi(t_0+\tau^\ast)}_{L^2}\leq e^{-\mu \tau^\ast}\norm{\psi(t_0)}_{L^2}\,.
\end{align}
Otherwise, we apply Lemma~\ref{l:h2small} and the above estimate still holds. Iterating this estimate, we  obtain
\begin{align}
\norm{\psi(t_0+n\tau^\ast)}_{L^2}\leq e^{-\mu n\tau^\ast}\norm{\psi(t_0)}_{L^2}\,.
\end{align}
For any $t>0$, there exists $n\in \N$ such that $t \in [n\tau^\ast,~ (n+1)\tau^\ast)$. Then it holds that
\begin{align}
\norm{\psi(t_0+t)}_{L^2}\leq  e^{-\mu n\tau^\ast}\norm{\psi(t_0)}_{L^2}\leq e^{-\mu(t-\tau^\ast)}\norm{\psi(t_0)}_{L^2}\leq C_0e^{-\mu t}\norm{\psi(t_0)}_{L^2}\,,
\end{align}
where $C_0$ can be taken as $e^{1/4}$.
\end{proof}

Our goal is to prove that $\norm{\phi(t)}_{L^2}$ is uniformly bounded in $t$.

\begin{theorem}
Let $\phi$ be the solution of~\eqref{e:aks} with initial data $\phi(0)=\phi_0\in L^2(\T^2)$. Then, the {\em a priori} bound
\begin{align}
\norm{\phi(t)}_{L^2}\leq C_1,
\end{align}
holds for $t>0$ with $C_1>0$ depending on the data, but not on $t$.
\end{theorem}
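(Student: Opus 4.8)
The plan is to exploit the orthogonal decomposition $\phi = \psi + \bar\phi$ into the mean-free part $\psi = \PP\phi$ and the spatially constant mean $\bar\phi$, for which $\norm{\phi(t)}_{L^2}^2 = \norm{\psi(t)}_{L^2}^2 + L_1 L_2\,\bar\phi(t)^2$, and to bound the two pieces separately on the maximal interval of existence $[0,T^\ast)$. The bound on $\psi$ is essentially free: applying Theorem \ref{t:globalEnergy} at $t_0 = 0$ with $B = \norm{\psi(0)}_{L^2} = \norm{\PP\phi_0}_{L^2}$, under the standing assumption that the dissipation time $\tau^\ast$ of $\bv$ satisfies \eqref{e:choiceoftau} for this value of $B$, yields the exponential decay $\norm{\psi(t)}_{L^2} \le C_0 e^{-\mu t}\norm{\psi(0)}_{L^2}$ for all $t$ in the interval of existence, so in particular $\sup_t \norm{\psi(t)}_{L^2} \le C_0\norm{\psi(0)}_{L^2}$.

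It then remains to bound the mean. From \eqref{e:mean}, $\bar\phi$ is nonincreasing, hence $\bar\phi(t) \le \bar\phi(0)$, and $\bar\phi(t) = \bar\phi(0) - \tfrac12\int_0^t \norm{\nabla\psi(s)}_{L^2}^2\,ds$; thus the only thing to verify is that $\int_0^\infty \norm{\nabla\psi(s)}_{L^2}^2\,ds < \infty$, which supplies the uniform lower bound $\bar\phi(t) \ge \bar\phi(0) - \tfrac12\int_0^\infty\norm{\nabla\psi}_{L^2}^2\,ds$. To this end I would integrate in time the energy identity $\tfrac12\tfrac{d}{dt}\norm{\psi}_{L^2}^2 = -\norm{\Delta\psi}_{L^2}^2 + \int_{\TT^2}\psi\,N(\psi)\,dx$ used in the proof of Lemma \ref{l:nodouble}, and absorb $\int \psi N(\psi)$ with \eqref{e:Nenergy}, producing $\int_0^t \norm{\Delta\psi}_{L^2}^2\,ds \le \norm{\psi(0)}_{L^2}^2 + C\int_0^t\big(\norm{\psi}_{L^2}^2 + \norm{\psi}_{L^2}^6\big)\,ds$. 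Because $\norm{\psi(s)}_{L^2}$ decays exponentially, both $\norm{\psi}_{L^2}^2$ and $\norm{\psi}_{L^2}^6$ are integrable on $[0,\infty)$, so $\int_0^\infty\norm{\Delta\psi}_{L^2}^2\,ds < \infty$; by the Poincar\'e inequality for mean-free functions $\norm{\nabla\psi}_{L^2}^2 \le C\norm{\Delta\psi}_{L^2}^2$, whence $\int_0^\infty\norm{\nabla\psi}_{L^2}^2\,ds < \infty$ as required. Combining the two bounds gives $\norm{\phi(t)}_{L^2} \le C_1$ with $C_1$ depending only on $L_1, L_2, \mu, \norm{\phi_0}_{L^2}$ and $\bar\phi(0)$, but not on $t$.

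The genuinely hard analytic input, namely the exponential decay of the mean-free part driven by the competition between hyperdiffusion and the enhanced dissipation of $\mathcal{S}_{s,t}$, is already contained in Theorem \ref{t:globalEnergy}, so the main subtlety remaining is one of bookkeeping. First, the decay estimate must hold uniformly on all of $[0,T^\ast)$: this is legitimate because the thresholds $T_0(B), T_1(B)$ in \eqref{e:choiceoftau} are decreasing in $B$, so once \eqref{e:choiceoftau} holds for the initial value of $\norm{\psi}_{L^2}$ it continues to hold as the norm decreases, which is precisely the monotonicity exploited in the iteration proving Theorem \ref{t:globalEnergy}. Second, one must convert the pointwise-in-time $L^2$ decay of $\psi$ into the time-integrability of $\norm{\Delta\psi(t)}_{L^2}^2$ needed to tame the monotone drift of the mean; this is the only place where the nonlinear structure, through \eqref{e:Nenergy}, re-enters and is the step I expect to require the most care. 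Finally, since the \emph{a priori} bound $\norm{\phi(t)}_{L^2}\le C_1$ holds on $[0,T^\ast)$, Corollary \ref{c:contPrinciple} forces $T^\ast = \infty$, upgrading the local mild solution to a global one.
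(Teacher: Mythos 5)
Your proposal is correct and follows essentially the same route as the paper: decompose $\phi=\psi+\bar\phi$, get exponential decay of $\psi$ from Theorem \ref{t:globalEnergy}, control the drift of the mean via \eqref{e:mean}, the Poincar\'e inequality $\norm{\nabla\psi}_{L^2}^2\leq \lambda_1^{-1}\norm{\Delta\psi}_{L^2}^2$, and the time-integrated energy estimate \eqref{e:genergy} combined with the decay of $\norm{\psi(t)}_{L^2}$ to show $\int_0^\infty\norm{\Delta\psi}_{L^2}^2\,ds<\infty$. The closing remarks on the monotonicity of $T_0(B)$, $T_1(B)$ and on invoking Corollary \ref{c:contPrinciple} to conclude $T^\ast=\infty$ match what the paper does in the surrounding results.
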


\begin{proof}
We first bound the mean of $\phi$ over $\TT^2$, $\Bar\phi$. From~\eqref{e:mean}, we have:
\begin{align}\label{e:tmp1}
\abs{\bar\phi(t) -\bar\phi_0}&\leq \frac{1}{2}\int_{0}^{t}\norm{\nabla \psi(s)}_{L^2}^2\,ds
\leq  \frac{1}{2\lambda_1}\int_{0}^{t}\norm{\Delta \psi(s)}_{L^2}^2\,ds,
\end{align}
where $\lambda_1$ is the first eigenvalue of $-\Delta$ on $\TT^2$.
Estimate ~\eqref{e:gdecay} gives that $\norm{\psi(t)}_{L^2}$ decays exponentially from $\norm{\psi_0}_{L^2}$, where $\psi_0=\phi_0-\Bar\phi_0$. Therefore, 
the energy estimates~\eqref{e:genergy}  and~\eqref{eq:globalEnergy} imply that
\begin{align*}
\int_{0}^{t}\norm{\Delta \psi(s)}_{L^2}^2\,ds&\leq \norm{\psi_0}_{L^2}^2+C\int_{0}^{t}\norm{\psi(s)}_{L^2}^2\,ds+C\int_{0}^{t}\norm{\psi(s)}_{L^2}^6\,ds\\
&\leq C\norm{\psi_0}_{L^2}^2+C\norm{\psi_0}_{L^2}^6\,,
\end{align*}
 From~\eqref{e:tmp1} it follows that
\begin{align*}
  \abs{\Bar{\phi}(t)} \leq \abs{\bar\phi(t) -\bar\phi_0} +\abs{\Bar{\phi_0}} \leq  \frac{C(\norm{\psi_0}_{L^2}^2+\norm{\psi_0}_{L^2}^6)}{2\lambda_1} + \abs{\Bar{\phi_0}}.
\end{align*}
We recall that $\phi(t)=\psi(t)+\Bar{\phi}(t)$.
Then by the triangle inequality and  \eqref{eq:globalEnergy} again, we get
\begin{align*}
\norm{\phi(t)}_{L^2}\leq\norm{\psi(t)}_{L^2}+\norm{\bar\phi(t)}_{L^2}=\norm{\psi(t)}_{L^2}+L_1L_2\,\sup_{t>0}\abs{\bar\phi(t)}\\
\leq C_0e^{-\mu t}\norm{\psi_0}_{L^2}+\frac{CL_1L_2(\norm{\psi_0}_{L^2}^2+\norm{\psi_0}_{L^2}^6)}{2\lambda_1}+L_1L_2\abs{\bar\phi_0}\,,
\end{align*}
which completes the proof.
\end{proof}

Applying Corollary \ref{c:contPrinciple}, we obtain global existence of mild solutions from Theorem \ref{t:globalEnergy}.

\begin{corollary}\label{c:globalAKS}
 Under the hypothesis of Theorem \ref{t:globalEnergy}, the mild solution $\phi$ of  the advective KSE \eqref{e:aks} exists on $[0,\infty)$.
\end{corollary}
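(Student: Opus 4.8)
The plan is to deduce global existence directly from the continuation principle of Corollary~\ref{c:contPrinciple} together with the uniform-in-time $L^2$ bound established just above. By Theorem~\ref{t:localAKS}, the mild solution $\phi$ exists on a maximal interval $[0,T^\ast)$, and Corollary~\ref{c:contPrinciple} provides the blow-up alternative: either $T^\ast=\infty$, or $\limsup_{t\to T^\ast_-}\|\phi(t)\|_{L^2}=\infty$. It therefore suffices to rule out the second possibility.

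First I would record that the hypotheses of Theorem~\ref{t:globalEnergy} are in force, in particular the smallness condition~\eqref{e:choiceoftau} on the dissipation time $\tau^\ast$ of $\bv$, so that the exponential decay estimate~\eqref{eq:globalEnergy} holds for $\psi=\PP\phi$. Feeding this decay into the energy estimate~\eqref{e:genergy} bounds $\int_0^t\|\Delta\psi(s)\|_{L^2}^2\,ds$ uniformly in $t$, which in turn controls the growth of the mean $\bar\phi$ through~\eqref{e:mean}. Combining the exponential decay of $\|\psi(t)\|_{L^2}$ with the resulting uniform bound on $|\bar\phi(t)|$ yields the \emph{a priori} estimate $\sup_{t>0}\|\phi(t)\|_{L^2}\leq C_1$ of the preceding theorem, with $C_1$ independent of $t$.

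Finally, since $\|\phi(t)\|_{L^2}\leq C_1$ for all $t>0$, the $\limsup$ in the blow-up alternative is finite, so the second branch of Corollary~\ref{c:contPrinciple} cannot occur for any finite $T^\ast$. Hence $T^\ast=\infty$ and $\phi$ exists on $[0,\infty)$, as claimed. The only genuine work lies upstream, in securing the uniform bound; given that bound, this last step is immediate. The single point requiring care is that the continuation criterion is phrased in terms of $\|\phi\|_{L^2}$, i.e. the full solution including its mean, so one must invoke the bound on $\phi$ itself and not merely the exponential decay of the mean-free part $\psi$, whose boundedness alone would not close the argument because $\bar\phi$ is not conserved by the forward evolution.
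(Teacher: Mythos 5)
Your proposal is correct and follows the same route as the paper: the paper simply combines the blow-up alternative of Corollary~\ref{c:contPrinciple} with the uniform \emph{a priori} bound $\|\phi(t)\|_{L^2}\leq C_1$ obtained from Theorem~\ref{t:globalEnergy} and the control of the mean $\bar\phi$. Your remark that one must bound the full solution $\phi$, and not just the mean-free part $\psi$, is exactly the point the paper addresses in the theorem preceding the corollary.
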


\section{Global existence without advection} \label{s:global}

In this section, we consider the standard scalar form of the KSE without advection. We prove global existence of mild solutions for small enough initial data in $L^2$ in the absence of growing modes for the linearized operator $\cL$. Global existence in $L^2$  for the differentiated form of KSE without growing modes, which corresponds to small data $\phi_0\in H^1$, was established by one of the authors and David Ambrose in \cite{AM19}. The proof in the scalar case is mode delicate, since the non-linearity is more singular. Furthermore, the spatial average of the solution is not preserved by the time evolution as for the differentiated form.

We  assume that the size of the periodic box $\TT^2$ is small:
\begin{equation} \label{eq:periodsCond}
          L_1<2\pi, \qquad L_2<2\pi,
\end{equation}
so that
\begin{equation} \label{eq:sigmaDef}
      \sigma(\tbk) = |\tbk|^4-|\tbk|^2>0, \text{ for all $\tbk \neq (0, 0)$,}
\end{equation}
since $|\tbk|>1$ by condition \eqref{eq:periodsCond} .

We let again $\psi=\PP \phi$. Using \eqref{e:mean},  it is enough to prove that $\psi$ exists globally in time and
$\|\nabla \phi(t)\|^2_{L^2} = \|\nabla\psi(t)\|^2_{L^2}$ in integrable on any fixed time interval.
To this end, we apply $\PP$  to the KSE:
\begin{equation}  \label{eq:kseProjected}
   \pa_t \psi =  -\Delta^2 \psi -\Delta \psi -\frac{1}{2} \PP(|\nabla \psi|^2).   
\end{equation}
Formally integrating in time, we obtain the mild form of the projected KSE:
\begin{equation} \label{eq:KSEmild}
   \psi(t) = e^{-t\cL} \psi_0 - \int_0^t e^{-(t-\tau)\cL} \frac{1}{2} \PP(|\nabla \psi|^2)(\tau)\, d\tau=: \cT(\psi),
\end{equation}
where $e^{-t\cL}$ denotes again the semigroup generated by the operator $\cL$,  $\psi_0=\psi(0)$, and the integral is intended in the B\"ochner sense.

We will construct a global mild solution to the KSE as a fixed point of the non-linear map $\cT$ in a suitable adapted Banach space. To this end, we introduce the global analog of the space $X_T$:
\[
       X_\infty :=\{ f:[0,\infty)\times \TT^2 \, \mid \, \sup_{t>0} t^{1/4} \|\nabla f\|_{L^2}<\infty\},
\]
and let \  $\tX_\infty=C([0,\infty);\rL^2)\cap X_\infty$ with the induced norm:
\[
     \|f\|_{\tX_{\infty}}.:= \text{Max}(\sup_{t\geq 0} \|f\|_{L^2}, \sup_{t>0} t^{1/4} \|\nabla f\|_{L^2}).
\]
The main result of this section is the following.

\begin{theorem}\label{t:KSEmildglobal}
 Let $L_1, L_2<2/\pi$ and $\psi_0\in \rL^2(\TT^2)$. There exists $\delta>0$ such that, if $\|\psi_0\|_{\rL^2}<\delta$, there exists a mild solution $\psi$ of  \eqref{eq:kseProjected} in $\tX_\infty$ such that $\psi(0)=\psi_0$.
\end{theorem}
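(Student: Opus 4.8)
The plan is to construct the global solution as the unique fixed point of the map $\cT$ from \eqref{eq:KSEmild} inside a small ball of $\tX_\infty$, via the Banach contraction principle, mirroring the local argument of Theorem \ref{t:localAKS} with $\bv=0$. The decisive new feature is that every semigroup bound and every time integral must now be controlled \emph{uniformly on the whole half-line} $[0,\infty)$ rather than on $[0,1]$, and this is precisely where the smallness of the box enters: condition \eqref{eq:periodsCond} forces $|\tbk|>1$, hence $\sigma(\tbk)>0$ for all $\tbk\neq\mathbf{0}$ (see \eqref{eq:sigmaDef}), so that the restriction of $e^{-t\cL}$ to $\rL^2$ \emph{decays exponentially} instead of growing. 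That decay is what will tame the large-time tail of the Duhamel integral.

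The first step is to upgrade Lemmas \ref{l:l2l1} and \ref{l:gradestimate} to global-in-time bounds carrying honest exponential decay. Writing $\kappa_{\min}=\min(2\pi/L_1,2\pi/L_2)>1$, one has the uniform coercivity $\sigma(\tbk)=\kappa^4-\kappa^2\ge c_1\kappa^4$ with $c_1:=1-\kappa_{\min}^{-2}>0$, while $\sigma_\ast:=\min_{\tbk\neq\mathbf{0}}\sigma(\tbk)>0$ provides a spectral gap. Repeating the Plancherel computations of Lemmas \ref{l:l2l1}--\ref{l:gradestimate}, but splitting $e^{-2t\sigma(\tbk)}\le e^{-t\sigma_\ast}e^{-tc_1\kappa^4}$ before comparing the residual frequency sum to a Gaussian-type integral, yields, for all $t>0$, $s\ge 0$, some $\alpha>0$ and $C=C(L_1,L_2,s)$,
\begin{align*}
\|e^{-t\cL}\PP f\|_{L^2}&\le C\,t^{-1/4}e^{-\alpha t}\,\|f\|_{L^1},\\
\|\nabla e^{-t\cL}\PP f\|_{L^2}&\le C\,t^{-1/2}e^{-\alpha t}\,\|f\|_{L^1},\\
\|(-\Delta)^{s/2}e^{-t\cL}\PP f\|_{L^2}&\le C\,t^{-s/4}e^{-\alpha t}\,\|f\|_{L^2}.
\end{align*}
Applied to $\psi_0$ these immediately give $e^{-t\cL}\psi_0\in\tX_\infty$, the weight $t^{1/4}$ in the gradient norm being controlled for small $t$ by the smoothing factor $t^{-1/4}$ and for large $t$ by $e^{-\alpha t}$.

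With these in hand, the a priori estimate for the nonlinear part proceeds exactly as in Lemma \ref{l:close}. Using $\||\nabla\psi(\tau)|^2\|_{L^1}=\|\nabla\psi(\tau)\|_{L^2}^2\le\tau^{-1/2}\|\psi\|_{\tX_\infty}^2$, the $L^2$ and weighted-gradient parts of $\cT(\psi)-e^{-t\cL}\psi_0$ reduce to showing that
\[
\int_0^t (t-\tau)^{-\beta}e^{-\alpha(t-\tau)}\tau^{-1/2}\,d\tau,\qquad \beta\in\{\tfrac14,\tfrac12\},
\]
is bounded uniformly in $t>0$ (the gradient case carrying an extra prefactor $t^{1/4}$). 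Splitting the integral at $\tau=t/2$ and using $e^{-\alpha(t-\tau)}\le e^{-\alpha t/2}$ on $[0,t/2]$ together with the convergence of $\int_0^\infty s^{-\beta}e^{-\alpha s}\,ds$ on $[t/2,t]$ makes each piece uniformly bounded; here the exponential factor is what kills the otherwise divergent contribution as $t\to\infty$. This produces $\|\cT(\psi)\|_{\tX_\infty}\le C_0\|\psi_0\|_{\rL^2}+C_1\|\psi\|_{\tX_\infty}^2$, and, by bilinearity of the Duhamel term as in Lemma \ref{l:diff}, the Lipschitz bound $\|\cT(\psi_1)-\cT(\psi_2)\|_{\tX_\infty}\le C_1(\|\psi_1\|_{\tX_\infty}+\|\psi_2\|_{\tX_\infty})\|\psi_1-\psi_2\|_{\tX_\infty}$. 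That $\cT(\psi)$ lies in $C([0,\infty);\rL^2)$ and remains mean-free follows as before from strong continuity of the semigroup, properties of the B\"ochner integral, and the presence of $\PP$.

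Finally I would close the fixed-point argument: pick $M$ with $2C_1M<1$ and then $\delta>0$ so small that $C_0\delta\le M/2$; if $\|\psi_0\|_{\rL^2}<\delta$, the two displayed estimates show that $\cT$ maps the closed ball $B(0,M)\subset\tX_\infty$ into itself and is a strict contraction there, hence has a unique fixed point $\psi$, which by \eqref{eq:KSEmild} is the desired global mild solution of \eqref{eq:kseProjected}. I expect the only genuine obstacle to be the second step—promoting the local semigroup estimates to global ones with a strictly positive decay rate—since everything afterward is a routine adaptation of Section \ref{s:localExistence}; the crucial point is that $\alpha>0$ can be chosen uniformly over all nonzero frequencies, which is exactly guaranteed by the spectral gap $\sigma_\ast>0$ coming from \eqref{eq:periodsCond}.
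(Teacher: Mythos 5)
Your proposal is correct and follows essentially the same route as the paper: exploit the spectral gap $\sigma(\tbk)>0$ on mean-free modes to upgrade Lemmas \ref{l:l2l1} and \ref{l:gradestimate} to global-in-time semigroup bounds with exponential decay (the paper's Lemmas \ref{l:l2l1global} and \ref{l:Hsl2global}), use these to bound the bilinear Duhamel term uniformly on $[0,\infty)$ by splitting the time integral, and close with a fixed-point argument in a small ball of $\tX_\infty$ (the paper invokes the bilinear fixed-point lemma, Proposition \ref{p:Banach}, rather than an explicit contraction, and splits the integral at $t-T_0$ rather than $t/2$, but these are cosmetic differences). The only point to tighten is the bound on the piece of $\int_0^t(t-\tau)^{-\beta}e^{-\alpha(t-\tau)}\tau^{-1/2}\,d\tau$ over $[t/2,t]$ for small $t$, where one should integrate $s^{-\beta}$ only up to $t/2$ rather than invoke $\int_0^\infty s^{-\beta}e^{-\alpha s}\,ds$; the resulting bound $Ct^{1/2-\beta}$ is uniformly bounded for $\beta\le 1/2$, so the conclusion stands.
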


Similarly to what done in Section \ref{s:local}, one can show that the mild solution of the projected equation is unique in $\tX_\infty$ and that it is a weak solution on $[0,\infty)$.


\begin{corollary} \label{c:KSEglobal}
 Under the condition of Theorem \ref{t:KSEmildglobal},  if $\phi_0\in L^2(\TT^2)$ and $\PP\phi_0$ is such that $\|\PP\phi_0\|_{L^2}<\delta$, for any $T>0$ there exists a mild solution of KSE with initial data $\phi_0$ on $[0,T]$.
\end{corollary}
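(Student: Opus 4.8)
The plan is to build the full solution $\phi$ of \eqref{eq:kse} by splitting it into its mean-free part, supplied by Theorem \ref{t:KSEmildglobal}, and its spatial mean, governed by the scalar equation \eqref{e:mean}. First I would apply Theorem \ref{t:KSEmildglobal} with $\psi_0 = \PP\phi_0$: since $\|\PP\phi_0\|_{L^2} = \|\PP\phi_0\|_{\rL^2} < \delta$, this yields a global mild solution $\psi \in \tX_\infty$ of the projected equation \eqref{eq:kseProjected} with $\psi(0) = \PP\phi_0$. Restricting to $[0,T]$ provides the mean-free component of the candidate solution.

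Next I would reconstruct the mean. With $\bar\phi_0 = \int_{\TT^2}\phi_0\, dx$, I define
\begin{equation*}
   \bar\phi(t) := \bar\phi_0 - \frac{1}{2}\int_0^t \|\nabla\psi(s)\|_{L^2}^2\, ds, \qquad 0 \leq t \leq T,
\end{equation*}
the integrated form of \eqref{e:mean}. The only point to verify is that $\|\nabla\psi(\cdot)\|_{L^2}^2$ is integrable on $[0,T]$, which holds because $\psi \in X_\infty$ gives $\|\nabla\psi(s)\|_{L^2}^2 \leq \|\psi\|_{\tX_\infty}^2\, s^{-1/2}$ and $s^{-1/2}$ is integrable near $s=0$. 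Hence $\bar\phi$ is continuous (indeed absolutely continuous) on $[0,T]$, and the candidate $\phi(t) := \psi(t) + \bar\phi(t)$ lies in $C([0,T];L^2(\TT^2))$ with $\nabla\phi = \nabla\psi$ locally integrable, matching the regularity demanded in Definition \ref{d:solutionDef}.

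The core step is to check that $\phi$ satisfies the Duhamel formula \eqref{eq:MildDef} with $\bv = 0$, which I would do mode by mode in Fourier space. For $\tbk \neq \mathbf{0}$, the identity $\nabla\phi = \nabla\psi$ gives $\PP(|\nabla\phi|^2) = \PP(|\nabla\psi|^2)$, so the nonzero modes of the Duhamel formula for $\phi$ coincide with the mild equation \eqref{eq:KSEmild} already satisfied by $\psi$. For the zero mode, the key observation is that $\sigma(\mathbf{0}) = 0$, so $e^{-t\cL}$ acts as the identity on constants; the zero-mode component of \eqref{eq:MildDef} then reduces exactly to $\bar\phi(t) = \bar\phi_0 - \tfrac{1}{2}\int_0^t \what{|\nabla\phi(\tau)|^2}(\mathbf{0})\, d\tau$, which is the definition of $\bar\phi$ since $\what{|\nabla\phi|^2}(\mathbf{0}) = \|\nabla\phi\|_{L^2}^2 = \|\nabla\psi\|_{L^2}^2$. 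Summing the two contributions shows $\phi$ is a mild solution of \eqref{eq:kse} on $[0,T]$.

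The main obstacle is the zero-mode bookkeeping: one must confirm that the separately defined mean $\bar\phi$ is consistent with the full Duhamel formula, which hinges on $e^{-t\cL}$ preserving constants together with the correct identification of the zero Fourier coefficient of the nonlinearity with $\|\nabla\psi\|_{L^2}^2$. This also clarifies why the conclusion is stated only on finite intervals $[0,T]$: absent an integrability or decay estimate for $\|\nabla\psi(s)\|_{L^2}^2$ at infinity (the $X_\infty$ bound only yields $s^{-1/2}$, which is not integrable on $[0,\infty)$), the mean $\bar\phi(t)$ may grow, so global-in-time control of $\|\phi(t)\|_{L^2}$ is unavailable by this argument, in contrast to the advective case, where Theorem \ref{t:globalEnergy} furnished exponential decay of $\|\psi(t)\|_{L^2}$.
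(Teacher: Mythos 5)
Your proposal is correct and follows essentially the same route as the paper: apply Theorem \ref{t:KSEmildglobal} to $\psi_0=\PP\phi_0$, reconstruct the mean $\bar\phi$ by integrating \eqref{e:mean} (using the $\tX_\infty$ bound $\|\nabla\psi(s)\|_{L^2}^2\lesssim s^{-1/2}$ for integrability near $s=0$), and set $\phi=\psi+\bar\phi$. Your explicit zero-mode verification that the separately defined $\bar\phi$ is consistent with the Duhamel formula is a useful elaboration of a step the paper leaves implicit, but it is the same argument.
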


 
We will prove both results at the end of this section. For notational ease, we use the symbol $\lesssim$ to mean $\leq c$, where the positive constant $c$ may depend on $L_1$ and $L_2$ or a regularity index $s$, but not on $t$.

\subsection{Semigroup estimates}

A main ingredient in the proof of Theorem \ref{t:KSEmildglobal} is operator estimates for $e^{-t\cL}$, valid for $t\in (0,\infty)$, which improves on the bounds obtained in \cite{AM19} and recalled in Lemma \ref{l:l2l1} and \ref{l:gradestimate}.

In what follows, we let $\tbk_0$ denote the non-zero elements on the lattice $\tilde \ZZ^2$ of minimal distance $\kappa_0$ to the origin.  We note that $\tbk_0$ depends only on $L_1$ and $L_2$ and $\kappa_0>1$.

\begin{lemma} \label{l:l2l1global}
For any $T_1>0$, there exists constants $\gamma_1,\beta>0$ depending on $L_1, L_2<2\pi$ and $T_1$ such that for all $f\in L^1(\TT^2)$ with mean zero and for all $t>0$,
  \begin{equation} \label{eq:l2l1global}
      \|e^{-t\cL}  f\|_{L^2} \leq \gamma_1 \, h_1(t)\,\|f\|_{L^1}.
  \end{equation}
where 
\[
     h_1(t)=\begin{cases}   t^{-1/4}, & 0<t\leq T_1, \\
         t^{-1/2}\, e^{-\beta t}, & t>T_1.
         \end{cases}
\]     
\end{lemma}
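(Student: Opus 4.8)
The plan is to pass to Fourier space via Plancherel and reduce the statement to a bound on the scalar sum $\Sigma(t) := \sum_{\tbk \in \widetilde\ZZ^2,\, \tbk \neq \mathbf{0}} e^{-2t\sigma(\tbk)}$. Since $f$ has mean zero, $\what{f}(\mathbf{0}) = 0$, so exactly as in Lemma \ref{l:l2l1} we have $\|e^{-t\cL} f\|_{L^2}^2 \leq (\sup_{\tbk} |\what{f}(\tbk)|)^2\, \Sigma(t) \leq C\|f\|_{L^1}^2\, \Sigma(t)$, and it suffices to show $\Sigma(t) \leq C\, h_1(t)^2$. The key structural fact, coming from $L_1, L_2 < 2\pi$, is that every nonzero frequency satisfies $\kappa \geq \kappa_0 > 1$; since $\sigma(\kappa) = \kappa^4 - \kappa^2$ is strictly increasing for $\kappa > 1/\sqrt{2}$, it follows that $\sigma(\tbk) \geq \beta_0 := \sigma(\tbk_0) = \kappa_0^4 - \kappa_0^2 > 0$ for all $\tbk \neq \mathbf{0}$. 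Thus there are no growing modes and the sum enjoys genuine exponential decay.

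For the short-time regime $0 < t \leq T_1$, I would argue as in Lemma \ref{l:l2l1}, except that positivity of $\sigma$ makes the finite-frequency part harmless. Splitting $\widetilde\ZZ^2 \setminus \{\mathbf{0}\}$ into the finite set $S = \{\tbk : \kappa^2 \geq \tfrac12 \kappa^4\}$ and its complement, one has $e^{-2t\sigma(\tbk)} \leq 1$ on $S$, while $\sigma(\tbk) \geq \kappa^4/2$ off $S$ gives $\sum_{\tbk \notin S} e^{-2t\sigma(\tbk)} \leq \sum_{\tbk \notin S} e^{-t\kappa^4} \leq C\int_{\RR^2} e^{-t|x|^4}\,dx \leq C t^{-1/2}$ by comparison with the integral. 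Hence $\Sigma(t) \leq |S| + C t^{-1/2} \leq C t^{-1/2}$ for $t \leq T_1$, using $1 \leq T_1^{1/2} t^{-1/2}$ on this range, which yields $\|e^{-t\cL}f\|_{L^2} \leq \gamma_1 t^{-1/4}\|f\|_{L^1}$.

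The main point is the long-time regime $t > T_1$, where I must produce both the exponential factor $e^{-\beta t}$ and the polynomial rate $t^{-1/2}$ simultaneously. The idea is to split the exponent and trade a sliver of exponential decay for the polynomial factor. Writing $2t\,\sigma(\tbk) = T_1 \sigma(\tbk) + (2t - T_1)\sigma(\tbk)$ and using $\sigma(\tbk) \geq \beta_0$ in the second factor gives $\Sigma(t) \leq e^{-(2t - T_1)\beta_0}\sum_{\tbk \neq \mathbf{0}} e^{-T_1\sigma(\tbk)} = C(T_1)\, e^{T_1\beta_0}\, e^{-2\beta_0 t}$, where $C(T_1) = \Sigma(T_1) < \infty$ converges because $\sigma(\tbk) \sim \kappa^4$. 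Now I fix any $\beta \in (0, \beta_0)$ and write $e^{-2\beta_0 t} = e^{-2\beta t}\, e^{-2(\beta_0 - \beta)t}$; since $t \mapsto t\, e^{-2(\beta_0 - \beta)t}$ is bounded on $[T_1, \infty)$, we have $e^{-2(\beta_0 - \beta)t} \leq C t^{-1}$ there, whence $\Sigma(t) \leq C t^{-1} e^{-2\beta t}$ and $\|e^{-t\cL} f\|_{L^2} \leq \gamma_1 t^{-1/2} e^{-\beta t}\|f\|_{L^1}$.

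I expect the only delicate point to be the bookkeeping in the long-time estimate—specifically choosing $\beta$ strictly below the spectral gap $\beta_0$ so that the surplus decay $e^{-2(\beta_0-\beta)t}$ can absorb the polynomial factor $t^{-1}$ uniformly for $t \geq T_1$—together with checking that the constants $\gamma_1$ and $\beta$ depend only on $L_1, L_2$ and $T_1$. Matching the two regimes at $t = T_1$ is then immediate upon enlarging $\gamma_1$.
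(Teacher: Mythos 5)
Your proof is correct, and it reaches the same two-regime bound on $\Sigma(t)=\sum_{\tbk\neq\mathbf 0}e^{-2t\sigma(\tbk)}$ via the same Plancherel reduction, but the mechanisms in each regime differ from the paper's. For $t\le T_1$ the paper rescales the comparison integral $\int_{\kappa_0}^\infty \kappa\,e^{-2t\kappa^2(\kappa^2-1)}d\kappa$ by $\bar\kappa=t^{1/4}\kappa$ to extract $t^{-1/2}$, whereas you recycle the $S$-splitting of Lemma \ref{l:l2l1}, using positivity of $\sigma$ (from $\kappa_0>1$) to make the finite part harmless; both work. For $t>T_1$ the paper keeps the quadratic factor in the exponent, bounding $\sigma(\tbk)=\kappa^2(\kappa^2-1)\ge\kappa^2(\kappa_0^2-1)$, so that the resulting Gaussian-type integral yields $t^{-1}e^{-2\beta t}$ directly with the full spectral-gap rate $\beta=\kappa_0^4-\kappa_0^2$; you instead use only the crude gap bound $\sigma\ge\beta_0$ together with summability of $e^{-T_1\sigma(\tbk)}$, and then sacrifice a sliver of exponential rate ($\beta<\beta_0$) to manufacture the $t^{-1}$ factor. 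Your route is more elementary and more robust (it needs nothing about $\sigma$ beyond a positive infimum and a convergent trace sum), at the cost of a strictly suboptimal $\beta$ --- which is immaterial here since the lemma only asserts existence of some $\beta>0$ depending on $L_1,L_2,T_1$. One cosmetic slip: with your normalization $\sum_{\tbk\neq\mathbf 0}e^{-T_1\sigma(\tbk)}=\Sigma(T_1/2)$ rather than $\Sigma(T_1)$, but either quantity is finite, so nothing changes.
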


\begin{proof}
We proceed as in Lemma \ref{l:l2l1}. By Plancherel's theorem:
\[
     \|e^{-t\sigma(\tbk)} \what{f}(\tbk)\|_{\ell^2} \lesssim \|f\|_{L^1}\, \| e^{-t\sigma(\tbk)}\|_{\ell^2}.
\]
By McLaurin's formula,
\[
  \begin{aligned}
      \| e^{-t \sigma({\tbk}) }&\|_{\ell^2}^2\lesssim \int_{\R^2-B(\mathbf{0}, \kappa_0)} e^{-2t \sigma({\tbk})}\, d\tbk 
      \lesssim \int_{\kappa_0}^\infty \kappa \, e^{-2 t \kappa^2 (\kappa^2-1)}\, d\kappa\\
      &\lesssim \int_{\kappa_0}^\infty \kappa \, e^{-2 t \kappa^2 (\kappa_0^2-1)}\, d\kappa
      \lesssim \frac{1}{t} \, e^{-2\beta t}, 
      \end{aligned}
\]
where $\beta =\kappa_0^2(\kappa_0^2-1)$.

Next, we observe that for $0<t\leq T_1$:
\[
  \begin{aligned}
      \| &e^{-t\sigma({\tbk})}\|_{\ell^2}^2\lesssim \int_{\R^2-B(\mathbf{0}, \kappa_0)} e^{-2t \sigma({\tbk})}\, d\tbk 
      \lesssim \int_{\kappa_0}^\infty \kappa \, e^{-2 t \kappa^2 (\kappa^2-1)}\, d\kappa \\
      &\lesssim \frac{1}{t^{1/2}} \int_{t^{1/4}\kappa_0}^{\infty} \Bar{\kappa}\, e^{-2  \Bar{\kappa}^2 (\Bar{\kappa}^2-t^{1/2})}\, 
       d\Bar{\kappa}  
 \lesssim \frac{1}{t^{1/2}} \int_{t^{1/4}\kappa_0}^{\infty} \Bar{\kappa}\, e^{-2  \Bar{\kappa}^2 (\Bar{\kappa}^2-T_1^{1/2})}\, 
       d\Bar{\kappa}\\
 & \lesssim \frac{1}{t^{1/2}} \int_{0}^{\infty} \Bar{\kappa}\, e^{-2  \Bar{\kappa}^2 (\Bar{\kappa}^2-T_1^{1/2})}\, d\Bar{\kappa}
  \lesssim t^{-1/2}.
  \end{aligned}
\]
\end{proof}

\begin{lemma} \label{l:Hsl2global}
 There exists  constants $\gamma_2, T_2>0$ depending on $s, L_1, L_2<2\pi$ such that for all $f\in \rL^2(\TT^2)$   and for all $t>0$,
  \begin{equation} \label{eq:Hsl2global}
      \|e^{-t\cL}  f\|_{\rH^s} \leq \gamma_2 h_2(t)\,\|f\|_{L^2}.
  \end{equation}
where 
\[
     h_2(t)=\begin{cases}   t^{-s/4}, & 0<t\leq T_2, \\
              e^{-\beta t}, & t>T_2.
        \end{cases}
\]     
\end{lemma}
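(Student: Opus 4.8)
The plan is to mirror the computations in Lemma~\ref{l:gradestimate} and Lemma~\ref{l:l2l1global}, reducing the statement to a scalar supremum over frequencies and then splitting into a short-time and a long-time regime. By Plancherel's identity and the equivalence of $\|\cdot\|_{\rH^s}$ with the homogeneous seminorm (recall that $\kappa\ge\kappa_0>1$ on the relevant frequencies, so the weights $\kappa^{2s}$ and $(1+\kappa^2)^s$ are comparable), I would write
\[
   \|e^{-t\cL}f\|_{\rH^s}^2 \lesssim \sum_{\tbk\ne\mathbf{0}}\kappa^{2s}\,e^{-2t\sigma(\tbk)}\,|\what f(\tbk)|^2 \le M(t)\,\|f\|_{L^2}^2,
\]
where $M(t):=\sup_{\kappa\ge\kappa_0}\kappa^{2s}e^{-2t\sigma(\kappa)}$ and $\sigma(\kappa)=\kappa^4-\kappa^2$. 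The lemma then follows once we show $M(t)\lesssim t^{-s/2}$ for $0<t\le T_2$ and $M(t)\lesssim e^{-2\beta t}$ for $t>T_2$, with $\beta=\sigma(\kappa_0)=\kappa_0^2(\kappa_0^2-1)$ exactly as in Lemma~\ref{l:l2l1global}.

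The two regimes rest on two elementary properties of $\sigma$ on $[\kappa_0,\infty)$: first, since $\kappa^{-2}\le\kappa_0^{-2}$ there, one has the coercive bound $\sigma(\kappa)\ge c_0\kappa^4$ with $c_0:=1-\kappa_0^{-2}>0$; second, since $\sigma'(\kappa)=2\kappa(2\kappa^2-1)>0$ for $\kappa>1$, the function $\sigma$ is increasing on $[\kappa_0,\infty)$, whence $\sigma(\kappa)\ge\sigma(\kappa_0)=\beta$. For the short-time bound (which in fact holds for all $t>0$) I would combine the coercive bound with the one-variable estimate $\sup_{x\ge 0}x^{2s}e^{-2c_0 t x^4}\lesssim (c_0 t)^{-s/2}$, whose maximizer sits at $x^4=s/(4c_0t)$; this gives $M(t)\lesssim t^{-s/2}$ and hence the claimed $t^{-s/4}$ rate on $(0,T_2]$.

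For the long-time bound I would factor the spectral gap out of the exponential by writing $2t\sigma(\kappa)=2\beta t+2t(\sigma(\kappa)-\beta)$, so that $M(t)=e^{-2\beta t}\sup_{\kappa\ge\kappa_0}\kappa^{2s}e^{-2t(\sigma(\kappa)-\beta)}$. Since $\sigma(\kappa)-\beta\ge 0$ on $[\kappa_0,\infty)$, for $t>T_2$ the exponent is monotone in $t$, so $e^{-2t(\sigma(\kappa)-\beta)}\le e^{-2T_2(\sigma(\kappa)-\beta)}=e^{2T_2\beta}e^{-2T_2\sigma(\kappa)}$; the leftover supremum is therefore dominated by $e^{2T_2\beta}M(T_2)\lesssim e^{2T_2\beta}T_2^{-s/2}$, a finite constant depending only on $s,L_1,L_2$ once $T_2$ is fixed (any $T_2>0$ works, e.g.\ $T_2=1$). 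This yields $M(t)\lesssim e^{-2\beta t}$ on $(T_2,\infty)$ and hence $\|e^{-t\cL}f\|_{\rH^s}\lesssim e^{-\beta t}\|f\|_{L^2}$, with $\gamma_2$ absorbing all the constants. The single delicate point, and the only real obstacle, is precisely this allocation of the dissipation: one must spend a fixed fraction of the decay $e^{-c_0 t\kappa^4}$ to absorb the polynomial weight $\kappa^{2s}$ while reserving the full gap $\beta$ for the exponential factor, and the monotonicity trick is what converts the $t$-dependent supremum into the harmless fixed-time quantity $M(T_2)$.
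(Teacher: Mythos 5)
Your argument is correct and follows essentially the same route as the paper: Plancherel reduces the estimate to the scalar supremum $\sup_{\kappa\ge\kappa_0}\kappa^{2s}e^{-2t\sigma(\kappa)}$, which is then bounded by $t^{-s/2}$ for short times and by $e^{-2\beta t}$ (with $\beta=\kappa_0^4-\kappa_0^2$) for long times. The only difference is in the elementary calculus handling of that supremum: the paper locates the exact maximizer $\bar\kappa^2=(\sqrt{1+4s/t}+1)/4$ and takes $T_2$ large enough that it drops below $\kappa_0$, whereas your coercivity bound $\sigma(\kappa)\ge c_0\kappa^4$ together with the gap-factoring/monotonicity trick yields the same two bounds for an arbitrary choice of $T_2$ --- a slightly more robust but equivalent computation.
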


\begin{proof}
We proceed as in Lemma \ref{l:gradestimate}. By Plancherel's theorem:
\begin{multline*}
  \|e^{-t\cL}  f\|_{\rH^s}^2  \lesssim \left(\sup_{\tbk\in \tilde{\ZZ}^2, \tbk\ne 0} |\tbk|^{2s}\,
    e^{-2t\sigma(\tbk)}\right) \, \|f\|_{L^2(\TT^2)}^2 \\
    \lesssim\left(\max_{\kappa\in \RR^+} \kappa^{2s}\,
    e^{-2t(\kappa^4-\kappa^2)}\right) \, \|f\|_{L^2(\TT^2)}^2.  \qquad 
\end{multline*}
Now, we simply observe that the maximum of the function $g(\kappa):=  \kappa^{2s}\,
e^{-2t(\kappa^4-\kappa^2)}$ occurs at $\bar{\kappa}^2=(\sqrt{1+4s/t}+1)/4$. Let $t=T_2$ be such that 
$\bar{\kappa}\leq \kappa_0$ (which occurs for $T_2$ large enough). Then for $0<t\leq T_2$,
$g(\kappa)\lesssim t^{-s/2}$ for all $\kappa\in \RR^+$, while for $t>T_2$, the maximum of $g$ occurs at $\kappa_0$ for $\kappa\geq \kappa_0$, so $g(\kappa)\lesssim e^{-2\beta t}$, where $\beta = \kappa_0^4-\kappa_0^2$ as in Lemma \ref{l:l2l1global}. Therefore:
\[
   \|e^{-t\cL}  f\|_{\rH^s}^2 \lesssim \begin{cases} \, t^{-s/2}  \|f\|_{L^2(\TT^2)}^2, & 0<t\leq T_2, \\
      e^{-2\beta t} \|f\|_{L^2(\TT^2)}^2, & t> T_2.
   \end{cases}
 \]
\end{proof}

\subsubsection{Proofs}

We proceed with the proof of Theorem  \ref{t:KSEmildglobal}. We find it convenient to use the following version of the Banach Contraction Mapping Theorem (see e.g. \cite{Can04}).

\begin{proposition} \label{p:Banach}
  Let $X$ be a Banach space equipped with norm $\|\cdot\|$, and let $B:X \times X\to X$ denote a bounded bilinear operator such that for some $\eta>0$,
  \begin{equation}
         \|B(x_1,x_2)\| \leq \eta \,\|x_1\|\,\|x_2\|,  \qquad \forall x_1, x_2\in X.
  \end{equation}
  Then, for all $y\in X$ with $\|y\|<1/(4\eta)$, the equation 
  \[
                    x= y +B(x,x)
  \]
  has a solution $\bar{x}\in X$ satisfying $\|\bar{x}\|\leq 2\, \|y\|$. Such solution is unique among those for which $\|x\| \leq 1/(2\eta)$.
\end{proposition}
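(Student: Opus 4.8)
The plan is to recast the quadratic equation $x = y + B(x,x)$ as a genuine fixed-point problem and apply the ordinary Banach contraction principle on a carefully chosen closed ball. Define $\Phi : X \to X$ by $\Phi(x) := y + B(x,x)$; solutions of the equation are exactly fixed points of $\Phi$. I would work in the closed ball $\overline{B}(0,R)$ of radius $R = 2\|y\|$ — the radius asserted for the solution — and exploit throughout that the hypothesis $\|y\| < 1/(4\eta)$ forces the strict bound $R < 1/(2\eta)$.

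First I would check that $\Phi$ maps $\overline{B}(0,R)$ into itself. Using the bilinear bound, $\|\Phi(x)\| \leq \|y\| + \eta\|x\|^2 \leq \|y\| + \eta R^2$, and with $R = 2\|y\|$ the desired inequality $\|y\| + \eta R^2 \leq R$ reduces to $4\eta\|y\|^2 \leq \|y\|$, i.e. to $\|y\| \leq 1/(4\eta)$, which holds. Next I would verify the contraction estimate. By bilinearity, $\Phi(x_1) - \Phi(x_2) = B(x_1 - x_2, x_1) + B(x_2, x_1 - x_2)$, so $\|\Phi(x_1) - \Phi(x_2)\| \leq \eta\big(\|x_1\| + \|x_2\|\big)\,\|x_1 - x_2\| \leq 2\eta R\,\|x_1 - x_2\|$. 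Since $2\eta R = 4\eta\|y\| < 1$, the map $\Phi$ is a strict contraction on the complete metric space $\overline{B}(0,R)$. The Banach fixed-point theorem then produces a unique fixed point $\bar x \in \overline{B}(0,R)$, which by construction satisfies $\|\bar x\| \leq R = 2\|y\|$.

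It remains to upgrade uniqueness from the small ball $\overline{B}(0,2\|y\|)$ to the full set $\{\,x : \|x\| \leq 1/(2\eta)\,\}$, on which $\Phi$ need not be a contraction, so I would argue directly from the equation rather than from $\Phi$. Given any solution $x$ with $\|x\| \leq 1/(2\eta)$, subtracting $\bar x = y + B(\bar x,\bar x)$ from $x = y + B(x,x)$ and using bilinearity yields $x - \bar x = B(x - \bar x, x) + B(\bar x, x - \bar x)$, whence $\|x - \bar x\| \leq \eta\big(\|x\| + \|\bar x\|\big)\,\|x - \bar x\|$. Because $\|\bar x\| \leq 2\|y\| < 1/(2\eta)$ is \emph{strict}, we have $\|x\| + \|\bar x\| < 1/\eta$, so the prefactor $\eta(\|x\| + \|\bar x\|)$ is strictly below $1$, forcing $x = \bar x$.

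The argument is elementary and I do not expect a substantive obstacle; the one place demanding care is exactly this final step, where it is the strict separation $\|\bar x\| < 1/(2\eta)$ — rather than a mere $\leq$ — that lets uniqueness persist all the way out to the boundary radius $1/(2\eta)$. When later applying the proposition to Theorem~\ref{t:KSEmildglobal}, I would take $X = \tX_\infty$, $y = e^{-t\cL}\psi_0$, and $B$ the bilinear convolution term in \eqref{eq:KSEmild}, so that the smallness $\|y\|_{\tX_\infty} \lesssim \|\psi_0\|_{\rL^2} < 1/(4\eta)$ is precisely what the hypothesis $\|\psi_0\|_{\rL^2} < \delta$ guarantees.
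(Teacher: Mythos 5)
Your proof is correct and is precisely the standard argument (contraction on the closed ball of radius $2\|y\|$, then a direct bilinearity estimate to extend uniqueness to the ball of radius $1/(2\eta)$, using the strict inequality $\|\bar x\|\le 2\|y\|<1/(2\eta)$). The paper does not prove this proposition itself but defers to the literature (\cite{Can04}), where this is exactly the argument given, so there is nothing to add.
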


We apply this proposition with $X=\tX_\infty$, $y=e^{-t\cL} \psi_0$, and $B(\psi_1,\psi_2) = -\frac{1}{2}\int_0^t e^{-(t-\tau) \cL}
\PP(\nabla \psi_1(\tau)\cdot \nabla \psi_2(\tau))\,d\tau$, so that $\mathcal{T}(\psi)= y +B(\psi,\psi)$. From Lemma \ref{l:l2l1global} and \ref{l:Hsl2global}, it follows immediately that $y\in \tX_\infty$.

We next show that $B$ is well defined and bounded.

\begin{lemma} \label{l:Bmapping1}
  $B:\tX_\infty\times \tX_\infty \to C([0,\infty);\rL^2)$ and there exists $\eta_1>0$ such that:
  \[
        \|B(\psi_1,\psi_2)(t)\|_{\rL^2} \leq \eta_1 \,\|\psi_1\|_{\tX_\infty}\,\|\psi_2\|_{\tX_\infty},
  \]
  for all $t\geq 0$ and for all $\psi_i\in \tX_\infty$, $i=1,2$.
\end{lemma}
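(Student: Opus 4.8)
The plan is to reduce everything to the refined $L^1\to L^2$ semigroup bound of Lemma \ref{l:l2l1global}, which is the only ingredient needed to land in $\rL^2$; the companion estimate Lemma \ref{l:Hsl2global} will instead be used for the gradient part of the $\tX_\infty$ norm in the subsequent lemma. First I would observe that $\PP(\nabla\psi_1(\tau)\cdot\nabla\psi_2(\tau))$ is mean-free by construction, so Lemma \ref{l:l2l1global} applies to it, and that $\PP$ is bounded on $L^1(\TT^2)$ (indeed $\|\PP f\|_{L^1}\leq 2\|f\|_{L^1}$). Combining this with Cauchy--Schwarz and the definition of the $\tX_\infty$ norm,
\[
   \|\PP(\nabla\psi_1(\tau)\cdot\nabla\psi_2(\tau))\|_{L^1}\lesssim \|\nabla\psi_1(\tau)\|_{L^2}\|\nabla\psi_2(\tau)\|_{L^2}\leq \tau^{-1/2}\,\|\psi_1\|_{\tX_\infty}\|\psi_2\|_{\tX_\infty}.
\]
Inserting Lemma \ref{l:l2l1global} under the integral sign then gives
\[
   \|B(\psi_1,\psi_2)(t)\|_{\rL^2}\lesssim \left(\int_0^t h_1(t-\tau)\,\tau^{-1/2}\,d\tau\right)\|\psi_1\|_{\tX_\infty}\|\psi_2\|_{\tX_\infty},
\]
so the lemma comes down to proving that $I(t):=\int_0^t h_1(t-\tau)\,\tau^{-1/2}\,d\tau$ is bounded uniformly in $t\in(0,\infty)$.

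For $0<t\leq T_1$ the kernel is $h_1(t-\tau)=(t-\tau)^{-1/4}$, and the rescaling $\tau=ts$ gives $I(t)=t^{1/4}\int_0^1(1-s)^{-1/4}s^{-1/2}\,ds$, a convergent integral; hence $I$ is bounded on $(0,T_1]$ and in fact $I(t)\to 0$ as $t\to 0^+$. For $t>T_1$ I would split $\int_0^t=\int_0^{t-T_1}+\int_{t-T_1}^t$. On $[t-T_1,t]$ one has $t-\tau\leq T_1$, so $h_1(t-\tau)=(t-\tau)^{-1/4}$ while $\tau^{-1/2}\leq (t-T_1)^{-1/2}$, and integrating the remaining $(t-\tau)^{-1/4}$ over an interval of length $T_1$ contributes $\lesssim (t-T_1)^{-1/2}$. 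On $[0,t-T_1]$ one has $t-\tau\geq T_1$, so $h_1(t-\tau)=(t-\tau)^{-1/2}e^{-\beta(t-\tau)}$, and I would split once more at $t/2$: on $[0,t/2]$ bound $(t-\tau)^{-1/2}e^{-\beta(t-\tau)}\leq (t/2)^{-1/2}e^{-\beta t/2}$ and integrate $\tau^{-1/2}$ to obtain $\lesssim e^{-\beta t/2}$, while on $[t/2,t-T_1]$ bound $\tau^{-1/2}\leq (t/2)^{-1/2}$ and substitute $u=t-\tau$ to reduce to $\int_{T_1}^{\infty}u^{-1/2}e^{-\beta u}\,du<\infty$. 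All pieces are bounded (indeed decaying) in $t$, giving $\sup_{t>0}I(t)<\infty$, which is the claimed estimate with $\eta_1$ the resulting constant.

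The mapping property $B(\psi_1,\psi_2)\in C([0,\infty);\rL^2)$ then follows from the standard properties of the B\"ochner integral together with the strong continuity of $e^{-t\cL}$, exactly as in \textit{Claim 1} of Lemma \ref{l:close}: the $\tau$-integrability of the $L^2$ norm of the integrand, which is precisely what the bound above furnishes, makes the integral well defined and continuous in $t$, with $B(\psi_1,\psi_2)(0)=0$ consistent with $I(t)\to 0$. The main obstacle is the uniform-in-$t$ estimate of $I(t)$: unlike the local argument, where a single Beta-function bound suffices on the bounded interval $[0,T]$, here one must track the transition of $h_1$ from the algebraic regime $t^{-1/4}$ to the exponentially decaying regime $t^{-1/2}e^{-\beta t}$ and arrange the splitting so that the exponential gain compensates the otherwise non-integrable tail at $t=\infty$.
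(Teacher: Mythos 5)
Your overall strategy coincides with the paper's: reduce to the refined $L^1\to L^2$ bound of Lemma \ref{l:l2l1global} via Cauchy--Schwarz and the weight $\tau^{-1/2}$ supplied by the $\tX_\infty$ norm, and then show $\sup_{t>0} I(t)<\infty$ for $I(t)=\int_0^t h_1(t-\tau)\,\tau^{-1/2}\,d\tau$ by splitting the integral at $t-T_1$ when $t$ is large. Your treatment of the far-from-diagonal piece over $[0,t-T_1]$ is correct, and in fact extracts exponential decay that the paper does not pursue (the paper simply discards $e^{-\beta(t-\tau)}\le 1$ there and invokes $\int_0^t(t-\tau)^{-1/2}\tau^{-1/2}\,d\tau=\pi$).

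There is, however, a step that fails: your bound on the near-diagonal piece. You estimate
\[
\int_{t-T_1}^t (t-\tau)^{-1/4}\,\tau^{-1/2}\,d\tau \;\le\; (t-T_1)^{-1/2}\int_{t-T_1}^t(t-\tau)^{-1/4}\,d\tau \;\lesssim\; (t-T_1)^{-1/2},
\]
and then assert that all pieces are bounded in $t$. But $(t-T_1)^{-1/2}$ is unbounded as $t\to T_1^{+}$, so this does not give $\sup_{t>T_1}I(t)<\infty$; a related defect is that your sub-splitting of $[0,t-T_1]$ at $t/2$ presupposes $t/2\le t-T_1$, i.e.\ $t\ge 2T_1$. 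Two repairs are available. Either keep both singular factors and recognize a Beta-type integral, which is what the paper does: with $\bar\tau=t-\tau$,
\[
\int_{t-T_1}^t (t-\tau)^{-1/4}\,\tau^{-1/2}\,d\tau=\int_0^{T_1}\bar\tau^{-1/4}(t-\bar\tau)^{-1/2}\,d\bar\tau\le\int_0^{T_1}\bar\tau^{-1/4}(T_1-\bar\tau)^{-1/2}\,d\bar\tau = T_1^{1/4}\int_0^1\theta^{-1/4}(1-\theta)^{-1/2}\,d\theta,
\]
uniformly for $t\ge T_1$. Or restrict your splitting to $t\ge 2T_1$ and handle $T_1<t<2T_1$ by the short-time argument, noting that $h_1(s)\le C(T_1)\,s^{-1/4}$ for all $0<s\le 2T_1$. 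With either repair your argument is complete and essentially reproduces the paper's proof.
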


\begin{proof} We first observe that, in view of the definition of the B\"ochner integral, it is enough to show that  $\|e^{-(t-\tau) \cL} \PP(\nabla \psi_1(\tau)\cdot \nabla \psi_2(\tau))\|_{\rL^2}\in L^1((0,t))$ uniformly in $t$.  We set $T_0:=\max(T_1,T_2)$ as in lemmas~\ref{l:l2l1global} and~\ref{l:Hsl2global}, and distinguish two cases.
\begin{enumerate}[label=(\alph*), ref=(\alph*)]
\item \label{i:a} $0< t\leq T_0$: by Lemma~\ref{l:l2l1global}, 
\[
      h_1(t) = t^{-1/4}, \qquad 0<t<T_0\,.
\]  
Hence:
\[
  \begin{aligned}
     \|B(\psi_1,\psi_2)(t)\|_{\rL^2} &\leq \frac{\gamma_1}{2} \, \int_0^t \frac{1}{(t-\tau)^{1/4}} \| (\nabla \psi_1(\tau)\cdot \nabla \psi_2(\tau))
     \|_{L^1}\, d\tau \\
     &\leq\frac{\gamma_1}{2} \left(\int_0^t \frac{1}{(t-\tau)^{1/4}} \frac{1}{\tau^{1/2}}\, d\tau\right) \| \psi_1\|_{\tX_\infty} \, \|\psi_2
     \|_{\tX_\infty} \\
     &\leq \tilde\gamma \, t^{1/4} \, \| \psi_1\|_{\tX_\infty} \, \|\psi_2\|_{\tX_\infty},
  \end{aligned}
\]
where the last inequality is obtained by making the change of  variable $\theta =\tau/t$ in the integral.

\item\label{i:b} $t>T_0$: we split the time integration into two parts:
\begin{multline*}
     \qquad  B(\psi_1,\psi_2)(t) = \frac{1}{2}\int_0^{t-T_0} e^{-(t-\tau) \cL}
\PP(\nabla \psi_1(\tau)\cdot \nabla \psi_2(\tau))\,d\tau \\ 
    + \frac{1}{2} \int_{t-T_0}^t e^{-(t-\tau) \cL}
\PP(\nabla \psi_1(\tau)\cdot \nabla \psi_2(\tau))\,d\tau =: I_1 +I_2. \qquad 
\end{multline*}
Since in $I_1$, $t-\tau>T_0$, from Lemma \ref{l:l2l1global}
\[
  \begin{aligned}
   \|I_1\|_{\rL^2} & \leq \frac{\gamma_1}{2}  \int_0^{t-T_0}   (t-\tau)^{-1/2}\, e^{-\beta (t-\tau)}\, \|\nabla \psi_1(\tau)\cdot     
     \nabla  \psi_2(\tau)\|_{L^1}\,d\tau\\
     &\lesssim \left(\int_0^{t-T_0}   (t-\tau)^{-1/2}\, \tau^{-1/2}\, e^{-\beta (t-\tau)}\,d\tau\right) \,\| \psi_1\|_{\tX_\infty} \, 
     \|\psi_2 \|_{\tX_\infty} \\
     & \lesssim \left(\int_0^{t}   (t-\tau)^{-1/2}\, \tau^{-1/2}\,d\tau\right) \,\| \psi_1\|_{\tX_\infty} \, 
     \|\psi_2 \|_{\tX_\infty} \\
     & \leq \Bar{\gamma} \,\| \psi_1\|_{\tX_\infty} \, 
     \|\psi_2 \|_{\tX_\infty}
 \end{aligned}
\]
again by making a change of variables in the last integral.
We next bound $I_2$. Since in $I_2$, $t-\tau<T_0$, as in Case \ref{i:a},
\[
  \begin{aligned}
    \|I_2\|_{\rL^2} &\leq 
      \frac{\gamma_1}{2} \int_{t-T_0}^t \frac{1}{(t-\tau)^{1/4}} \| \nabla \psi_1(\tau)\cdot \nabla \psi_2(\tau)
     \|_{L^1}\, d\tau \\
     &\lesssim \left(\int_{t-T_0}^t \frac{1}{(t-\tau)^{1/4}\,\tau^{1/2}}\, d\tau\right) \,\| \psi_1\|_{\tX_\infty} \, \|\psi_2
     \|_{\tX_\infty} \\
    &\lesssim \left(\int_{0}^{T_0} \frac{1}{\bar\tau^{1/4}\,(t-\bar\tau)^{1/2}}\, d\bar\tau\right)\,\| \psi_1\|_{\tX_\infty} \, \|    
    \psi_2\|_{\tX_\infty} \\
     &\leq \mathring{\gamma} \,T_0^{1/4} \,\| \psi_1\|_{\tX_\infty} \, \|
     \psi_2\|_{\tX_\infty}
  \end{aligned}
\]
where $\bar\tau=t-\tau$ and the last inequality follows by making the change of variables in the integral $\theta=\bar\tau/T_0$.
\end{enumerate}
The desired estimate now follows by taking the supremum over $t$ in both cases and setting  $\eta_1=\max(\tilde\gamma\, T_0^{1/4},\bar\gamma, \mathring{\gamma} \,T_0^{1/4})$.
\end{proof}

\begin{lemma} \label{l:Bmapping2}
  There exists $\eta_2>0$ such that:
  \[
        t^{1/4}\,\|B(\psi_1,\psi_2)(t)\|_{\rH^1} \leq \eta_2  \,\|\psi_1\|_{\tX_\infty}\,\|\psi_2\|_{\tX_\infty},
  \]
  for all $t>0$ and for all $\psi_i\in \tX_\infty$, $i=1,2$.
\end{lemma}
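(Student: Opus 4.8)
The plan is to reduce the statement to a bound on $\|\nabla B(\psi_1,\psi_2)(t)\|_{L^2}$, since $B(\psi_1,\psi_2)(t)$ is mean-free (the projection $\PP$ sits inside the integrand and $e^{-t\cL}$ preserves the mean-free subspace), so the $\rH^1$ norm is equivalent to the homogeneous seminorm $\|(-\Delta)^{1/2}\cdot\|_{L^2}=\|\nabla\cdot\|_{L^2}$. The main device is the semigroup splitting
\[
    (-\Delta)^{1/2} e^{-(t-\tau)\cL} = \big[(-\Delta)^{1/2} e^{-\frac{t-\tau}{2}\cL}\big]\, e^{-\frac{t-\tau}{2}\cL},
\]
which lets me apply Lemma~\ref{l:Hsl2global} with $s=1$ to the first factor (bounded $L^2\to\rH^1$, producing $h_2$) and Lemma~\ref{l:l2l1global} to the second factor (bounded $L^1\to\rL^2$, producing $h_1$); the integrand $\PP(\nabla\psi_1\cdot\nabla\psi_2)$ is mean-free and lies in $L^1$ with $\|\PP(\nabla\psi_1\cdot\nabla\psi_2)\|_{L^1}\lesssim \|\nabla\psi_1\|_{L^2}\|\nabla\psi_2\|_{L^2}$ by Cauchy--Schwarz. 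Combining these and using $\|\nabla\psi_i(\tau)\|_{L^2}\leq \tau^{-1/4}\|\psi_i\|_{\tX_\infty}$ yields
\[
    \|\nabla B(\psi_1,\psi_2)(t)\|_{L^2} \lesssim \|\psi_1\|_{\tX_\infty}\|\psi_2\|_{\tX_\infty}\int_0^t h_1\big(\tfrac{t-\tau}{2}\big)\,h_2\big(\tfrac{t-\tau}{2}\big)\,\tau^{-1/2}\,d\tau.
\]

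The heart of the argument is then the uniform-in-$t$ control of $J(t):=\int_0^t h_1(\tfrac{t-\tau}{2})h_2(\tfrac{t-\tau}{2})\tau^{-1/2}\,d\tau$ after multiplication by $t^{1/4}$. First I would record the pointwise kernel bound $h_1(r)h_2(r)\lesssim r^{-1/2}e^{-\beta r}$ valid for all $r>0$: writing $T_0:=\max(T_1,T_2)$, on $(0,T_0]$ both factors are $\lesssim r^{-1/4}$ (the short-time forms, the exponential tails on any bounded interval being harmless), so the product is $\lesssim r^{-1/2}\lesssim r^{-1/2}e^{-\beta r}$ after absorbing $e^{\beta T_0}$; while for $r>T_0$ one has $h_1(r)h_2(r)=r^{-1/2}e^{-2\beta r}\leq r^{-1/2}e^{-\beta r}$. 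Substituting this and discarding the harmless factor $2$ in the argument reduces $J(t)$ to $\int_0^t (t-\tau)^{-1/2}e^{-\beta(t-\tau)/2}\tau^{-1/2}\,d\tau$ up to a constant.

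To finish I would split $[0,t]$ at $t/2$. On $[0,t/2]$ one has $t-\tau\geq t/2$, so pulling out $e^{-\beta t/4}$ and using the scale-invariant Beta integral $\int_0^{t/2}(t-\tau)^{-1/2}\tau^{-1/2}\,d\tau\lesssim 1$ gives a contribution $\lesssim e^{-\beta t/4}$. On $[t/2,t]$ one has $\tau\geq t/2$, so after the change of variables $u=t-\tau$ the contribution is $\lesssim (t/2)^{-1/2}\int_0^{t/2}u^{-1/2}e^{-\beta u/2}\,du\lesssim \min\big(1,\,t^{-1/2}\big)$, using the convergent integral $\int_0^\infty u^{-1/2}e^{-\beta u/2}\,du<\infty$ for large $t$ and $\int_0^{t/2}u^{-1/2}\,du\lesssim t^{1/2}$ for small $t$. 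Altogether $t^{1/4}J(t)\lesssim t^{1/4}e^{-\beta t/4}+t^{1/4}\min(1,t^{-1/2})$, both of which are bounded on $(0,\infty)$, which is exactly the claim with $\eta_2$ the resulting constant.

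The step I expect to be the main obstacle is the uniform bound on $J(t)$: one must simultaneously tame the non-integrable-looking $r^{-1/2}$ singularity of the kernel at the diagonal $\tau=t$ (handled by the Beta-integral scaling, as in the short-time estimates) and exploit the exponential decay $e^{-\beta r}$ coming from the spectral gap $\beta=\kappa_0^2(\kappa_0^2-1)$ of $\cL$ on the mean-free subspace to obtain genuine decay, rather than mere boundedness, for large $t$. The weight $t^{1/4}$ must be absorbed consistently across both the small-$t$ regime (where $J$ is bounded) and the large-$t$ regime (where $J$ decays like $t^{-1/2}$); reconciling these is precisely what makes the $t^{1/4}$ weight in the $\tX_\infty$ norm the natural choice.
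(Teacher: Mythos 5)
Your proposal is correct, and it uses the same core machinery as the paper: the semigroup splitting $e^{-(t-\tau)\cL}=e^{-\frac{t-\tau}{2}\cL}e^{-\frac{t-\tau}{2}\cL}$ combined with Lemma~\ref{l:Hsl2global} (with $s=1$) and Lemma~\ref{l:l2l1global}, the Cauchy--Schwarz bound $\|\nabla\psi_1\cdot\nabla\psi_2\|_{L^1}\lesssim\tau^{-1/2}\|\psi_1\|_{\tX_\infty}\|\psi_2\|_{\tX_\infty}$, and a uniform-in-$t$ estimate of the resulting singular convolution. Where you diverge is in the treatment of that convolution integral. The paper keeps the piecewise form of $h_1,h_2$ and splits the time integral at $t-T_0$: on $[0,t-T_0]$ it changes variables to $\theta=(t-\tau)/t$ and recognizes the integral $\int_0^1(1-\theta)^{-1/2}\theta^{-1/2}e^{-\beta t\theta}\,d\theta$ as $\pi e^{-\beta t/2}\mathcal{I}_0(\beta t/2)$, invoking the large-argument asymptotics of the modified Bessel function, while on $[t-T_0,t]$ it evaluates an $\arcsin(\sqrt{T_0/t})$ and uses $\arcsin(x)=O(x)$. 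You instead first merge the short-time and long-time regimes into the single pointwise kernel bound $h_1(r)h_2(r)\lesssim r^{-1/2}e^{-\beta r}$ (the verification across the mismatched thresholds $T_1\ne T_2$ is routine but worth writing out), and then split at $\tau=t/2$, which yields the bound $t^{1/4}J(t)\lesssim t^{1/4}e^{-\beta t/4}+t^{1/4}\min(1,t^{-1/2})$ by a scale-invariant Beta integral and a convergent Gamma-type integral. Your route is more elementary --- it avoids special functions entirely and makes transparent where the exponential gain and the $t^{-1/2}$ decay each come from --- at the modest cost of the preliminary kernel unification; the paper's route gives a slightly sharper closed-form rate on the first piece ($e^{-\beta t/2}\mathcal{I}_0(\beta t/2)\sim (\beta t)^{-1/2}$) but relies on Bessel asymptotics that are not really needed for the mere boundedness being claimed.
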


\begin{proof} We again distinguish two cases and use the notation in the proof of Lemma \ref{l:Bmapping1}.

\begin{enumerate}[start=3, label=(\alph*), ref=(\alph*)]
\item \label{i:c} $0< t\leq T_0$: 
we note first that
\begin{multline*}
   \qquad \|B(\psi_1,\psi_2)(t)\|_{\rH^1} \leq\frac{1}{2}\int_0^t \|e^{-\frac{(t-\tau)}{2} \cL}\|_{L^2->\rH^1} \\
   \| e^{\frac{-(t-\tau)}{2} 
     \cL}\|_{L^1->L^2} \| \nabla \psi_1(\tau)\cdot \nabla 
    \psi_2(\tau)\|_{L^1}\,d\tau. \qquad
\end{multline*}
As in Case \ref{i:a},
\[
      h_1(t)=h_2(t) = t^{-1/4}, \qquad 0<t<T_0,
\]  
where $h_1, h_2$ are the functions in Lemmas~\ref{l:l2l1global} and~\ref{l:Hsl2global}.  We then have the bound:
\[
  \begin{aligned}
     \|B(\psi_1,\psi_2)(t)\|_{\rH^1} &\leq \frac{\gamma_1\gamma_2}{\sqrt 2} \, \int_0^t \frac{1}{(t-\tau)^{1/2}} \| \nabla \psi_1(\tau)\cdot \nabla \psi_2(\tau)
     \|_{L^1}\, d\tau \\
     &\leq  \frac{\gamma_1\gamma_2}{\sqrt 2} \left(\int_0^t \frac{1}{(t-\tau)^{1/2}} \frac{1}{\tau^{1/2}}\, d\tau\right) \| \psi_1\|_{\tX_\infty} \, \|\psi_2
     \|_{\tX_\infty} \\
     &\leq \what\gamma \, \| \psi_1\|_{\tX_\infty} \, \|\psi_2\|_{\tX_\infty},
  \end{aligned}
\]
again by making the change of variables $\theta=\tau/t$ in the last integral.
Therefore, for $0<t\leq T_0$:
\[
    t^{1/4}\, \|B(\psi_1,\psi_2)(t)\|_{\rH^1}  \leq \what\gamma\, T_0^{1/4} \, \| \psi_1\|_{\tX_\infty} \, \|\psi_2\|_{\tX_\infty}.
\] 

\item \label{i:d} $t>T_0$: again we split the integral into two parts:
\begin{multline*}
     \qquad  \nabla B(\psi_1,\psi_2)(t) = \frac{1}{2}\int_0^{t-T_0} \nabla e^{-(t-\tau) \cL}
\PP(\nabla \psi_1(\tau)\cdot \nabla \psi_2(\tau))\,d\tau \\ 
    + \frac{1}{2}\int_{t-T_0}^t \nabla e^{-(t-\tau) \cL}
\PP(\nabla \psi_1(\tau)\cdot \nabla \psi_2(\tau))\,d\tau =: \tilde{I}_1 +\tilde{I}_2. \qquad 
\end{multline*}
Since in $\tilde{I}_1$, $t-\tau>T_0$, from Lemma \ref{l:l2l1global}-\ref{l:Hsl2global} with $s=1$,
\[
  \begin{aligned}
     t^{1/4} \|\tilde{I}_1\|_{\rL^2} &\leq  t^{1/4} \cdot \\     
      \cdot \int_0^{t-T_0} &
   \|e^{-\frac{(t-\tau)}{2} \cL}\|_{L^2\to\rH^1} \| e^{-\frac{(t-\tau)}{2} 
     \cL}\|_{L^1\to L^2}  \| \nabla \psi_1(\tau)\cdot \nabla 
    \psi_2(\tau)\|_{L^1}\,d\tau\\ 
    &\lesssim  \left( t^{1/4} \int_0^{t-T_0} \frac{e^{-\beta (t-\tau)}}{(t-\tau)^{1/2} \, \tau^{1/2}}\, d\tau   
    \right)  \| \psi_1\|_{\tX_\infty} \, \|\psi_2 \|_{\tX_\infty} \\
     &\lesssim\, \left( t^{1/4} \int_{T_0}^t \frac{e^{-\beta \bar\tau}}{(t-\bar\tau)^{1/2} \, \bar\tau^{1/2}}\, 
     d\bar\tau \right)  \| \psi_1\|_{\tX_\infty} \, \|\psi_2 \|_{\tX_\infty} \\
     &\lesssim\, \left( t^{1/4} \int_0^t \frac{e^{-\beta \bar\tau}}{(t-\bar\tau)^{1/2} \, \bar\tau^{1/2}}\, 
     d\bar\tau \right)  \| \psi_1\|_{\tX_\infty} \, \|\psi_2 \|_{\tX_\infty} \\
     &\lesssim \left( t^{1/4} \int_0^1 \frac{e^{-\beta t \theta}}{(1-\theta)^{1/2} \, \theta^{1/2}}\, 
     d\theta \right)  \| \psi_1\|_{\tX_\infty} \, \|\psi_2 \|_{\tX_\infty} \\
     &\lesssim \left( t^{1/4}  e^{-\beta t/2} \,\mathcal{I}_0(\beta t/2) \right)  \| \psi_1\|_{\tX_\infty} \, \|\psi_2 \|_{\tX_\infty}, \\
  \end{aligned}
\]
where $\bar\tau=t-\tau$, $\theta=\Bar\tau/t$, and $\mathcal{I}_0$ is the modified Bessel function of the first kind. Since $\mathcal{I}_0(t)=O(e^t/\sqrt{t})$ for large $t$, we conclude that $\sup_{t>T_0}  t^{1/4}  e^{-\beta t/2} \,\mathcal{I}_0(\beta t/2)\leq c(L_1,L_2)$, so that 
\[
     \sup_{t>T_0} t^{1/4}\,\|\tilde{I}_1\|_{L^2} \leq \alpha_1(L_1,L_2)\,  \| \psi_1\|_{\tX_\infty} \, \|\psi_2 \|_{\tX_\infty}.
\] 
We next turn to the second integral. Since in $\tilde{I}_2$, $t-\tau<T_0$, from Lemma \ref{l:l2l1global}-\ref{l:Hsl2global} with $s=1$,
\[
  \begin{aligned}
    t^{1/4}\, &\|I_2\|_{\rL^2} \leq 
      \frac{\gamma_1 \gamma_2}{\sqrt 2} \, t^{1/4}\, \int_{t-T_0}^t \frac{1}{(t-\tau)^{1/2}} \| \nabla \psi_1(\tau)\cdot \nabla \psi_2(\tau)
     \|_{L^1}\, d\tau \\
     &\lesssim   t^{1/4} \left( \int_{0}^{T_0} \frac{1}{(t-\bar{\tau})^{1/2}\, \bar{\tau}^{1/2}}\, d\bar{\tau} \right) 
     \| \psi_1\|_{\tX_\infty}  \, \|\psi_2 \|_{\tX_\infty} \\
      &\lesssim    t^{1/4} \left ( \int_{0}^{T_0/t} \frac{1}{(1-\theta)^{1/2}\theta^{1/2}}\, d\theta\right) \| \psi_1\|_{\tX_\infty} \, 
      \|\psi_2 \|_{\tX_\infty} \\
      & \lesssim   t^{1/4} \arcsin\left(\sqrt{T_0/t}\right) \, \| \psi_1\|_{\tX_\infty} \, \|\psi_2\|_{\tX_\infty},
  \end{aligned}
\]
where we used the same notation as for $\tilde{I}_1$ and that $0<T_0/t<1$. Since $\arcsin(x)=O(x)$ as $x\to 0$, we conclude that \mbox{$\sup_{t>T_0} t^{1/4}$} $\arcsin\left(\sqrt{T_0/t}\right)< c(L_1,L_2)$, so that
\[
     \sup_{t>T_0} t^{1/4}\,\|\tilde{I}_2\|_{L^2} \leq \alpha_2(L_1,L_2)\,  \| \psi_1\|_{\tX_\infty} \, \|\psi_2 \|_{\tX_\infty}.
\] 
\end{enumerate}

We finally obtain the desired estimate by setting $\eta_2=\max(\what\gamma\, T_0^{1/4}, \alpha_1, \alpha_2)$.
\end{proof}

\begin{proof}[Proof of Theorem \ref{t:KSEmildglobal}]
From Lemma \ref{l:Bmapping1} and \ref{l:Bmapping2}, it follows that $B:\tX_\infty\times \tX_\infty \to \tX_\infty$ and that
\begin{equation} \label{eq:Bmapping}
     \|B(\psi_1,\psi_2)\|_{\tX_\infty} \leq \eta \,\|\psi_1\|_{\tX_\infty}\,\|\psi_2\|_{\tX_\infty},
\end{equation}
for all $\psi_i\in \tX_\infty$, $i=1,2$, where $\eta=\max(\eta_1,\eta_2)$.
Next, Lemma~\ref{l:Hsl2global} gives that $\|e^{-t \cL} \psi_0\|_{\tX_\infty}<1/(4\eta)$
provided   $\|\psi_0\|_{\rL^2} <\delta$ with $\delta$ sufficiently small. We conclude the proof by applying Proposition \ref{p:Banach}, observing that $\mathcal{T}(\psi)$ is continuous in $t$ in $\rL^2$ so that 
$\psi(0)=\mathcal{T}(\psi)(0)=\psi_0$.
\end{proof}

\begin{proof}[Proof of Corollary \ref{c:KSEglobal}]
Let $\psi_0=\PP \phi_0$. Let $\psi$ be the mild solution obtained in Theorem \ref{t:KSEmildglobal}. Fix $T>0$ arbitrary. Then, $\psi\in C([0,T];\rL^2)\cap \tX_T$ is a mild solution of the projected equation \eqref{eq:kseProjected} on $[0,T]$
Let $\Bar\phi(t)$ be the unique solution of
\[
      \frac{\partial \Bar{\phi}  }{\pa t} = -\|\nabla \psi\|_{L^2}^2, \qquad 0<t<T,
\]
with $\Bar\phi(0)=\Bar\phi_0$ and let $\phi=\psi+\Bar{\phi}$. Then,  $\phi\in C([0,T];L^2)\cap \tX_T$ and $\Bar{\phi}$ solves \eqref{e:mean}, consequently $\phi$ is a mild solution of KSE on $[0,T]$.
\end{proof}

\bibliographystyle{plain}
\bibliography{KSmixing}

\end{document}